\itshape\color{mygreen},    
\newcommand\independent{\protect\mathpalette{\protect\independenT}{\perp}}
\def\independenT#1#2{\mathrel{\rlap{$#1#2$}\mkern2mu{#1#2}}}
\newtheorem{thm}{Theorem}[section]
\newtheorem{cor}[thm]{Corollary}
\newtheorem{lem}[thm]{Lemma}
\newtheorem{prop}[thm]{Proposition}
\theoremstyle{definition}
\newtheorem{defi}[thm]{Definition}
\newtheorem*{remark}{Remark}
\newtheorem{example}[thm]{Example}
\def\ed{\stackrel{d}{=}}
\def\bi{\begin{itemize}}
\def\ei{\end{itemize}}
\def\aa{\alpha}
\def\bb{\beta}
\definecolor{drab}{rgb}{0.59,0.44,0.09}
\def\tcd{\textcolor{drab}}
\def\be{\begin{equation}}
\def\ee{\end{equation}}
\def\bea{\begin{eqnarray}}
\def\eea{\end{eqnarray}}
\def\nn{\nonumber}
\def\({\left(}
\def\){\right)}
\def\[{\left[}
\def\]{\right]}
\newcommand*{\defeq}{\stackrel{\mathrm{def}}{=}}
\newcommand*{\sep}{\mathit{sep}}
\def\t{\tilde}
\def\d={\buildrel d \over =}
\def\rstr{\mathit{Restr}}
\def\srstr{\mathit{Restr}^o}
\def\dom{\mathit{Dom}}
\def\X{\mathbb{X}}
\def\N{\mathbb{N}}
\def\Q{\mathbb{Q}}
\def\Z{\mathbb{Z}}
\def\M{\mathcal{M}}
\def \P{\mathbf{P}}
\def \bS{\mathbf{S}}
\def \bU{\mathbf{U}}
\def \bX{\mathbf{X}}
\def \bY{\mathbf{Y}}
\def \AA{{\cal A}}
\def \CC{{\cal C}}
\def \EE{{\cal E}}
\def \FF{{\cal F}}
\def \GG{{\cal G}}
\def \HH{{\cal H}}
\def \MM{{\cal M}}
\def \NN{{\cal N}}
\def \TT{{\cal T}}
\def \XX{{\cal X}}
\newcommand{\hy}[1]{\tcd{[{\textbf{HY:}\textit{#1}}]}}
\newcommand{\indices}[1]{\mathbb{N}^{#1}}
\newcommand{\res}[2]{#1|_{#2}}
\newcommand{\commentout}[1]{}
\begin{document}
\title{A Generalization of Hierarchical Exchangeability on Trees to Directed Acyclic Graphs}

\author[1]{Paul Jung}
\author[1]{Jiho Lee}
\author[2]{Sam Staton}
\author[3]{Hongseok Yang}

\affil[1]{Department of Mathematical Sciences, KAIST}
\affil[2]{Department of Computer Science, University of Oxford}
\affil[3]{School of Computing, KAIST}

\maketitle

\abstract{  
		Motivated by the problem of designing inference-friendly Bayesian nonparametric models
		in probabilistic programming languages, we introduce a general class of partially exchangeable random arrays  which generalizes the notion of hierarchical exchangeability introduced in Austin and Panchenko (2014). We say that our partially exchangeable arrays are DAG-exchangeable since their partially exchangeable structure is governed by a collection of Directed Acyclic Graphs. More specifically, such a random array is indexed by $\N^{|V|}$ for some DAG $G=(V,E)$, and its exchangeability structure is governed by the edge set $E$.  We prove a representation theorem for such arrays which generalizes the Aldous-Hoover and  Austin-Panchenko representation theorems.

\vspace{5mm}

Key words: Bayesian nonparametrics, Exchangeability, Hierarchical exchangeability, Aldous-Hoover representation, de Finetti representation

\section{Introduction}

In \cite{austin:panchenko:2014}, Austin and Panchenko consider a random array indexed by {$\ell$-tuples of} paths over a collection of $\ell$ infinitely-branching rooted trees, of finite depths $\{r_1,\ldots, r_\ell\}$, where each path in the $\ell$-tuple starts from the root of one tree in the collection and ends at a leaf of that tree.\footnote{They studied such a random array to address an issue arising in spin glasses, which is to prove the predictions of the M{\'e}zard and Parisi ansatz for diluted spin glass models~\cite{Mzard2001TheBL}. See~\cite{panchenko2015hierarchical} for details.} 
If the branches emanating from a given vertex are labelled by $\N$, then the index set of the random array is $\N^{r_1}\times\cdots\times\N^{r_\ell}$. This random array is {\it hierarchically exchangeable}, defined in \cite{austin:panchenko:2014},  if its joint distribution remains invariant under rearrangements that preserve the structure of each {rooted} tree in the collection underlying the index set. {In other words, if its joint distribution remains invariant under any map
$\tau=(\tau_1,\ldots,\tau_\ell)$ on the index set, where $\tau_i$ is a rooted-graph isomorphism of the $i$th rooted tree.}

In their work, Austin and Panchenko prove that such arrays have a representation in the spirit of the celebrated Aldous-Hoover representation for exchangeable arrays of random variables \cite{hoover1979relations, aldous1981representations, aldous1985exchangeability}.  In the special case where all trees in the collection have a depth of one, {$r_1=\cdots=r_\ell=1$}, i.e., are copies of $\N$ rooted at $\emptyset$ (see {\bf Figure~\ref{fig:tree}}), then hierarchical exchangeability reduces to separate exchangeability, also known as row-column exchangeability. The number of trees in the collection corresponds to the dimension of the random array. We refer to \cite[Ch. 7]{kallenberg2006probabilistic} (see also \cite{aldous1985exchangeability, austin2012exchangeable}) for the definition of separate exchangeability, {a statement of the Aldous-Hoover theorem,} and additional background on what are now classic results in the theory of exchangeable random arrays.

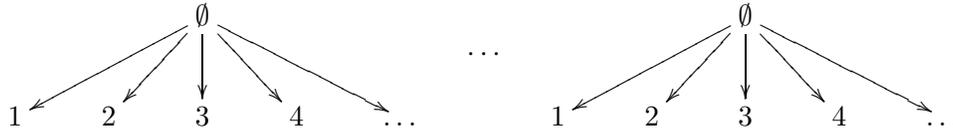
\begin{figure}
        $$
        \xymatrix{
                  &   & \emptyset \ar[dll] \ar[dl] \ar[d] \ar[dr] \ar[drr] \\
                1 & 2 & 3 & 4 & \ldots \\
        }
        \begin{array}{l}
        \\
        \\
        \quad\cdots\quad
        \end{array}
        \xymatrix{
                  &   & \emptyset \ar[dll] \ar[dl] \ar[d] \ar[dr] \ar[drr] \\
                1 & 2 & 3 & 4 & \ldots \\
        }
        $$
        \caption{A collection of rooted trees of depth 1}
        \label{fig:tree}
\end{figure} 

{In this work, we generalize hierarchical exchangeability in \cite{austin:panchenko:2014} from trees to directed acylic graphs (DAGs). Our generalization is motivated by issues related to exchangeable random processes from Bayesian nonparametrics and their implementations in probabilistic programming languages,} {which we will describe in a separate subsection of this introduction.} 
	
	Let us describe our generalization of hierarchical exchangeability. Recall that a finite DAG is the same thing as a finite partially ordered set (that is, a set with a binary relation $\preceq$ that is reflexive, transitive, and anti-symmetric).  For a given DAG, we use the partial order on its vertices: $v\preceq w$ whenever there is a path (possibly of length zero) from~$v$ to~$w$. 
	{Conversely, we can regard a finite partially ordered set as a DAG having a directed edge $(v,w)$ if $v \prec w$ whenever there is no $v'$ such that $v\prec v'\prec w$.}
	
	In the sequel, we slightly abuse notation by writing $G$ when we refer to the vertex set of $G$. Conversely, when we refer to a subset or some other vertex set, we will often assume an underlying edge structure induced by the partial order.
	A subset~$W$ of a DAG $G$ is \textbf{downward-closed}
	(with respect to the partial ordering) if $w'\in W$ and $w\preceq w'$ implies that $w\in W$.
	We will see that, in the context of hierarchical exchangeability, being ``higher'' in the hierarchy than $v$ corresponds to being smaller than $v$ in the partial order $\preceq$.  We will often say ``closed'' instead of ``downward-closed'' for simplicity.
	For a closed subset $C$ of $G$, we denote the collection of 
	closed subsets of $C$ by
	$$
	\AA_C  \defeq \{D ~:~ D \text{   closed\, and\, }D \subseteq C\}.
	$$
	\begin{defi}\label{def:randomarray}
		Let $C\in\AA_G$. A \textbf{$C$-type multi-index} is a function $\aa$ from the vertices in $C$ to $\mathbb{N}$. We write $\N^C$ for the set of all $C$-type multi-indices.  If $D\in\AA_C$, then every $C$-type multi-index $\aa\in\N^{C}$ can be restricted to a $D$-type multi-index $\aa|_{D}\in \N^{D}$. 
		A \textbf{$C$-type random array} in a Borel space $\XX$ is a family of random variables, $$\bX_C = (X_{\aa} : \aa \in \indices C)$$ indexed by $C$-type multi-indices. Here each $X_\aa$ is a random variable taking values in $\XX$.
	\end{defi}
	\begin{remark}
	\noindent
	We henceforth do not mention the Borel space $\XX$, and just say $C$-type random array.
	\end{remark}
	\begin{defi}\label{def: G-auto}
		Let $G$ be a finite DAG. A \textbf{$G$-automorphism} of $\N^G$ is a bijection $\tau\colon \N^G\to\N^G$ such that 
		\begin{equation}
		\label{eqn: automorphism} 
		\res \aa C = \res \bb C \Longleftrightarrow \res{(\tau(\aa))}C = \res {(\tau (\bb))} C,\quad
                \text{for all $C\in\AA_G$ and all $\aa,\bb \in \N^G$}.
		\end{equation}		
		For $C\in\AA_G$, we say that a $C$-type random array $\bX_C$ is \textbf{DAG-exchangeable} if for every $G$-automorphism $\tau$,
		$$ \bX_C\ed \tau(\bX_C) $$
                where $\tau(\bX_C)$ is the random array $(\bX_{\tau(\alpha)} : \alpha \in \mathbb{N}^C)$.
	\end{defi}
		 
	\begin{remark}
        \noindent The index set of a $C$-type random array, $\mathbb{N}^C$, also has a natural DAG structure which is infinitely-branching for every nonterminal vertex. Since this structure is somewhat complicated, we will delay its explanation until Example \ref{ex:infdag}.
\end{remark} 

        DAG-exchangeability is an instance of {\it partial exchangeability} which means that it does not require invariance with respect to all bijections of the index set $\mathbb{N}^G$, but rather, only with respect to some subgroup of these bijections \cite{aldous1985exchangeability}. In the case of         DAG-exchangeability, this subgroup is precisely the set of
        $G$-automorphisms. In Examples \ref{ex:known} and \ref{ex:G-auto}, we present examples of bijections which are or are-not $G$-automorphisms.  

Our main result is a representation theorem for DAG-exchangeable arrays. 
The Aldous-Hoover theorem represents an exchangeable array $\bX = (X_{ij})$ with a  measurable function of collections of independent uniform random variables corresponding to the entries in the array and the symmetries of the array, as well as an additional uniform random variable corresponding to the symmetries of the symmetries. Our representation is similar in that it represents arrays with a measurable function of collections of independent uniform random variables. Again, one collection corresponds to the entries in the array, and the other collections correspond to the symmetries of the array induced by DAG-exchangeability, as well as the symmetries of the symmetries.

\begin{thm}\label{cor 2}
	If a $G$-type random array $\bX$ is DAG-exchangeable, then there exists a measurable function $g: [0,1]^{\AA_G} \to \XX$ such that
	\begin{equation}
	\label{eqn: representation}
	\bX = \left(X_\alpha : \alpha \in \indices G\right) \ed \Big(g\big(U_{\alpha|_C} : C \in \AA_G\big) : \alpha \in \indices G\Big)
	\end{equation}
	where 
	 the $U_\beta$ are independent $[0,1]$-uniform random variables.
\end{thm}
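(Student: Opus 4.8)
The plan is to obtain \eqref{eqn: representation} in two stages: first produce an Aldous--Hoover representation of $\bX$ over \emph{all} subsets of $G$, and then show that the independent uniforms sitting at the levels which are not downward-closed can be discarded. For the first stage, note that every coordinatewise permutation of the index set is a $G$-automorphism: if $(\pi_v)_{v\in G}$ are permutations of $\N$ and $\tau(\aa)$ denotes the multi-index $v\mapsto\pi_v(\aa(v))$, then $\tau$ is a bijection of $\indices G$ and $\res\aa C=\res\bb C\iff\res{(\tau(\aa))}C=\res{(\tau(\bb))}C$ for every $C\in\AA_G$ (each $\pi_v$ being injective), so \eqref{eqn: automorphism} holds. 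Hence a DAG-exchangeable $\bX$ is, in particular, separately exchangeable as an $\indices G$-indexed array, and the multidimensional Aldous--Hoover theorem (e.g.\ \cite[Ch.~7]{kallenberg2006probabilistic}) provides i.i.d.\ $[0,1]$-uniform random variables $\big(U_{S,\beta}\big)$, indexed by pairs with $S\subseteq G$ and $\beta\in\indices S$, together with a measurable map $f$ such that
$$\bX \ed \Big( f\big( U_{S,\,\res\aa S} : S\subseteq G \big) : \aa\in\indices G \Big).$$

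For the second and main stage, observe that a subset $S$ of $G$ fails to be downward-closed precisely when some vertex $w\in S$ has a strict predecessor outside $S$, i.e.\ $\down w\not\subseteq S$, where $\down w\defeq\{u : u\prec w\}$. Fix such a $w$. For any family $(\rho_\gamma)_{\gamma\in\indices{\down w}}$ of permutations of $\N$, the map $\tau$ that fixes every coordinate other than $w$ and sends $\aa(w)$ to $\rho_{\res\aa{\down w}}(\aa(w))$ is again a $G$-automorphism, as one checks from \eqref{eqn: automorphism} (on a downward-closed $C$ the permutation applied to the $w$-entry is a function of $\res\aa C$, because $w\in C$ forces $\down w\subseteq C$). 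The key point is that $\tau$ acts on a level $\indices I$ through a measure-preserving bijection whenever $w\notin I$ (the identity) or $\down w\subseteq I$ (then $\res{(\tau(\aa))}I$ is a bijective function of $\res\aa I$), but when $w\in I$ and $\down w\not\subseteq I$ -- in particular for $I=S$ -- the value $\res{(\tau(\aa))}I$ depends on an entry of $\aa$ outside $I$, so $\tau$ does \emph{not} descend to a bijection of $\indices I$.

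This mismatch is what allows the level $S$ to be removed. One chooses a sequence of such automorphisms $\tau$ for which the finitely many $\indices S$-coordinates read by the sub-array on $\{1,\dots,k\}^{G}$ are sent to fresh, pairwise distinct coordinates -- which is possible exactly because $\rho_\gamma$ is allowed to depend on $\gamma$, hence on an entry of $\aa$ not recorded by $\res\aa S$ -- while each surviving level is only relabelled by a measure-preserving bijection; letting $k\to\infty$ along a de Finetti--type averaging then shows that $\bX$ is almost surely a measurable function of the uniforms at the levels $I\neq S$. The same construction simultaneously eliminates every level $I$ with $w\in I$ and $\down w\not\subseteq I$, and carrying it out once for each vertex $w$ (in reverse topological order, say) eliminates every non-downward-closed level. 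What remains is a representation using only the levels in $\AA_G$, which is \eqref{eqn: representation} after relabelling $U_{C,\beta}$ as $U_\beta$.

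I expect the averaging step to be the main obstacle: the approximating automorphisms must be set up so that, on each finite window, the coordinates at the level being eliminated genuinely decouple from all of the surviving coordinates, and the passage to the limit must then be made rigorous -- this is the DAG counterpart of the reduction lemmas underlying the Aldous--Hoover theorem and of the depth induction in Austin--Panchenko, and some care is needed to check that each elimination leaves the array in a form to which the next one applies. The remaining points are routine: the surviving index set is exactly $\AA_G$ since $S$ is downward-closed iff $\down w\subseteq S$ for every $w\in S$; and the converse direction -- every $g$ indexed by $\AA_G$ yields a DAG-exchangeable array -- holds because \eqref{eqn: automorphism} says precisely that each $G$-automorphism $\tau$ descends to a bijection $\tau_C$ of $\indices C$ with $\tau_C(\res\aa C)=\res{(\tau(\aa))}C$ for every $C\in\AA_G$, so that relabelling the independent uniforms on the disjoint levels by the $\tau_C$ leaves their joint law unchanged.
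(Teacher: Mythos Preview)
Your approach is correct in outline and genuinely different from both proofs in the paper. The paper derives Theorem~\ref{cor 2} as the special case $\CC=\{G\}$ of the fine-grained Theorem~\ref{Thm 1}, whose proof does not start from Aldous--Hoover: it introduces the symmetry $\sigma$-fields $\FF_\alpha$ (Section~\ref{sec: symmetry rv S}), shows via Hoover's conditional-independence result (Proposition~\ref{prop: Hoover conditional independence}) and the key identity $\FF_\aa^{\sep}=\FF_{\aa^o}$ (Lemma~\ref{lem:separate symmetry is null}) that the relevant $\sigma$-fields are conditionally independent, and then runs a two-level induction (on $|G|$ and on subsets of terminal vertices) to assemble the representation. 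The appendix gives a second, model-theoretic route via Crane--Towsner. Your plan---apply Aldous--Hoover for separate exchangeability and then eliminate the non-closed levels using the extra $G$-automorphisms---is closest in spirit to Lemma~\ref{lem:separate symmetry is null}, but implemented at the level of representations rather than $\sigma$-fields. It buys a very direct reduction to the classical theorem, at the cost of not yielding the fine-grained version for general $\CC$.

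One correction to the mechanism you describe: the elimination step is not a ``de Finetti--type averaging'', and you do not obtain an almost-sure statement. What actually works is this. For a vertex $w$ and a window size $k$, choose permutations $(\rho_\gamma)_{\gamma\in[k]^{\down w}}$ so that $(\gamma,m)\mapsto\rho_\gamma(m)$ is injective on $[k]^{\down w}\times[k]$. For each ``bad'' level $I$ (i.e.\ $w\in I$, $\down w\not\subseteq I$) write $J=I\cup\down w$; then on $[k]^G$ the family $U^{(I)}_{\tau(\alpha)|_I}$, viewed as a function of $\alpha|_J$, is i.i.d.\ and independent of the (bijectively relabelled) uniforms at the ``good'' levels. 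The tuple of all uniforms that collapse to a given good $J$ is therefore i.i.d.\ in $\alpha|_J$ and can be coded by a single uniform via a fixed Borel isomorphism $[0,1]^m\to[0,1]$. This gives, for every $k$, a distributional identity between $(X_\alpha)_{\alpha\in[k]^G}$ and the \emph{same} measurable function of i.i.d.\ uniforms at the good levels; since the function does not depend on $k$, Kolmogorov consistency (not an ergodic limit) yields the full representation. Iterating over vertices is sound because $\bX$ remains DAG-exchangeable throughout and each step returns a representation of the same Aldous--Hoover shape on a smaller collection of levels; you should also record that $J=I\cup\down w$ is always ``good'' for $w$ (since $\down w\subseteq J$) and remains in the surviving collection at each subsequent step, which is easy to check.
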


In fact, in this work we prove a slightly more general result than the above, namely that `consistent' representations can be found  for each $C$-type array, simultaneously for all $C\in\AA_G$.
\begin{defi}\label{def:fg-randomarray}
	Let $\CC$ be a sequence consisting of distinct closed sets of $G$'s vertices. A \textbf{$\CC$-type random array collection} in $\XX$ is a sequence of random variable families $$\mathbb{X}= (\bX_C : C \in \CC)$$ where each $\bX_C$ is a $C$-type random array.
\end{defi}
In particular, for every DAG-exchangeable random array with indices in $\mathbb{N}^G$ and each downward-closed subset $C$ of $G$,
	there is a canonical way of generating a random array with the index set $\mathbb{N}^C$. 
	   Our main representation theorem provides representations for all such induced random arrays, simultaneously. 
	We will refer to such a family of consistent representations, for all induced $C$-type arrays, as being  {\it fine-grained}.

There are many reasons for considering such generalized random arrays as we do here. Partial exchangeability was considered by de Finetti himself. For instance in \cite[Ch. 12]{de1975theory}, he discusses its role in both parametric and nonparametric Bayesian statistics.  In the 1980s the subject flourished\footnote{ Two somewhat recent surveys of the early results in this field are given in \cite{austin2012exchangeable} and \cite{aldous2009more}.}, and $d$-dimensional random arrays (i.e., matrices and tensors) emerged as fundamental structures underlying the theory of partial exchangeability. Indeed, in the foundational work of \cite{hoover1979relations}, separately exchangeable arrays together with their joint and weak exchangeable counterparts, are seen to arise quite naturally as mathematical objects. However, even there,
the question of when representations arise for other partially exchangeable random arrays is posed in Section 7. As already mentioned, \cite{austin:panchenko:2014} is one work in this direction using probabilistic arguments in the spirit of \cite{aldous1981representations} and \cite{kallenberg2006probabilistic}. Also, using ultraproducts and other model theoretic tools in the spirit of \cite{hoover1979relations}, the work of \cite{crane2017relative, crane2018relatively} introduce the quite general notion of {\it relative} exchangeability, from which many forms of partial exchangeability can be extracted. 

The main contribution of this work is the extension, in terms of a probabilistic proof, of the concrete framework of hierarchical exchangeability. As noted in the works of Crane and Towsner, hierarchical exchangeability can already be extracted from their more abstract framework; we will show in the appendix that DAG-exchangeability for single arrays also falls under the umbrella of their abstract framework and thus can also be (nontrivially) realized in that framework (we have yet to see whether DAG-exchangeability for families of arrays, which is addressed in our main result, fits into the picture of relative exchangeability, albeit there are some indications that it should). On the level of applications of DAG-exchangeability, we provide a summary of our motivations with respect to Bayesian nonparametric models and probabilistic programming in the next subsection of this introduction. 

Let us mention that recently, partial exchangeability was found to have ramifications in the study of random graphs, their limits, and their statistical properties \cite{austin2008exchangeable, diaconis2008graph, veitchroy2015sparse, caicampbellbroderick2016edge, caron2017sparse, crane2018edge}. We remark that the exchangeability of graphs is not the topic of this paper-- instead we use directed graphs as a tool to create and describe our probabilistic symmetries. It is however, not unreasonable to envision that our results can be applied to this line of research in the future.


Following the applications presented in the next subsection, the rest of the paper is organized as follows.  {In Section \ref{sec:setup}, we present some examples which motivate our notion of DAG-exchangeable arrays, and illustrate the probabilistic symmetries induced by $G$-automorphisms. Our examples include, in particular, how hierarchical exchangeability fits into the framework of DAG-exchangeability. In Section \ref{sec:result} we start by extending the notion of DAG-exchangeability to collections of $C$-type random arrays, and then present our main result. The proof of this result comprises Section \ref{sec:proof}.} In the appendix we indicate an alternate route of proving our representation, without the fine-graining discussed above. This alternate method is model-theoretic and is based on the work of \cite{crane2017relative}.


\subsection{Applications to Probabilistic Programming}
\label{sec:probprog}

In terms of applications, our motivation comes from studying generative models of array-like structures through probabilistic programming languages\footnote{At this stage, Church~\cite{church2008} and Anglican~\cite{wood-aistats-2014,TolpinMYW16} have some support for advanced Bayesian nonparametric models, through the XRP feature and the \texttt{produce}/\texttt{absorb} constructs for random processes, but we are really thinking of a next generation of probabilistic programming languages, e.g.~\cite{statonetal2017pps}, with proper module and library functionality.}. These are high-level languages for statistical modeling that come equipped with separate Bayesian inference engines, which implement statistical inference algorithms such as the Metropolis-Hastings algorithm and Gibbs sampling.

In that context, one application of exchangeability and Aldous-Hoover type theorems is to identify when an elaborate, hierarchical generative model can be replaced by an equivalent one, with better independence properties, that is more amenable to inference engines. To briefly summarize this, we provide a concrete illustration in the case of a 2-dimensional exchangeable random array.  A statistical programmer can implement this as an abstract data type with the following functions:
\begin{equation}\label{eqn:datatype}
\begin{minipage}{0.92\linewidth}
\begin{lstlisting}
  newArray: () -> Array              
  newRow: Array -> Row
  newColumn: Array -> Column
  entry: (Array,Row,Column) -> real  
\end{lstlisting}
\end{minipage}
\end{equation}
One can build a finite part of the array by writing a program such as:
\begin{lstlisting}
  a = newArray();
  r1 = newRow(a); ... ; rm = newRow(a);
  c1 = newColumn(a); ... ; cn = newColumn(a);
  result[1,1] = entry(a,r1,c1); ... ; result[1,n] = entry(a,r1,cn); 
  result[2,1] = entry(a,r2,c1); ... ; result[2,n] = entry(a,r2,cn); 
  ...
  result[m,1] = entry(a,rm,c1); ... ; result[m,n] = entry(a,rm,cn)
\end{lstlisting}
so that the program will randomly return an $m\times n$ array which is a projection of the ultimate infinite random array. 

To be more precise, we suppose that the rows correspond to people, and the columns to movies, and that there is a~$1$ in a particular cell of an array if that person liked that movie, and a~$0$ otherwise. 
In this scenario, the `infinite relational model'~\cite{IRM,IHRM,OrbanzRoy} provides a reasonable array.
The model implicitly clusters the people into sorts, and clusters movies into genres,
and there is an implicit probability that each sort of person will like a movie in each genre. 
This can be described in a generative way, where we build the array up, as follows~\cite[Ex.~IV.1]{OrbanzRoy}. 
\begin{itemize}
\item An \lstinline|Array| will be represented by a mutable data structure in memory that records how many people are currently grouped into each sort, and how many movies are currently in each genre, as well as how many people of each sort liked how many movies of each genre. 
\item \lstinline|newRow| works according to the `Chinese Restaurant Process' \cite{ghosal2017fundamentals} as follows. Suppose that there are currently $m$ sorts of people, and
$c_i$ people of each sort. Then the new person will be of sort $i\leq m$ with probability
$
  \frac {c_i}{1+\sum_{j=1}^mc_j}
$
and a new sort with probability $\frac 1 {1+\sum_jc_j}$.
\item When we add a new person (row), we need to also update the new entries in the array -- does the new person like the existing movies (columns)?
We do this using a Polya urn scheme for each genre~$j$. 
Suppose that the new person is of sort~$i$.
We now run through the movies in genre~$j$, and like each one in turn with probability
$$\frac{a+1}{a+b+2}$$
where $a$ is the number of `likes' between people of sort $i$ and movies in genre~$j$, including the movies considered for the new person so far,
and $b$ is the number of `dislikes'.
\item
Similarly, \lstinline|newColumn| works according to the Chinese Restaurant Process, picking a genre for a new movie,
and deciding whether the existing people like the new movie. 
\end{itemize}
This implementation is a natural generative program, but the conditional independence appears to be complicated because each step depends on
the last. Because of the exchangeability properties of this infinite relational model,
the Aldous-Hoover theorem provides an equivalent implementation that is less generative
but with much clearer independence properties. In this particular example we can use a stick-breaking implementation of the
Chinese Restaurant Process, and a Beta distribution to implement the Polya urn~\cite[Ex.~IV.6]{OrbanzRoy}. 
\begin{itemize}
\item An \lstinline|Array| is represented by two random countable partitions of $[0,1]$, say $r,c:[0,1]\to \mathbb N$, chosen by stick breaking according to the Dirichlet process, together with a random function
  $e:\mathbb N^2\to[0,1]$, i.i.d. uniformly distributed, giving the probability of liking for any given sort of person and genre. The functions $r,c,$ and $e$ are determined by the empirical distribution of the array in question.
\item 
  Each person and movie is assigned a uniformly distributed number in $[0,1]$. 
\item
  The probability that a person $p$ likes a movie $m$ is $e(r(p),c(m))$.
  One way to set this up is to say that each person-movie pair ($p$, $m$) is assigned a uniform random $s_{m,p}\in[0,1]$ i.i.d.,~and the person $p$ likes the movie~$m$ if $s_{m,p}<e(r(p),c(m))$. 
\end{itemize}

The Aldous-Hoover theorem guarantees that the above procedure provides an accurate sample from the array in question.
Putting this together, we have a random function $F: [0,1]^3\to \{0,1\}$ given by
$F(p,m,s_{m,p})=[s_{m,p}<e(r(p),c(m))]$. 
Picking a parameterization for the random function $F$ gives an ordinary function
$f:[0,1]^4\to \{0,1\}$, which is an Aldous-Hoover representation. 

An inference engine can take advantage of the clear independence between people and movies in this implementation.

From the interface~\eqref{eqn:datatype} we can directly read off the DAG for random arrays given in Example~\ref{ex:known}(b). 
{Furthermore, the interface suggests generalizations that are more complicated and hierarchical, but which are natural to consider in a generative model. For instance, we may extend the interface in \eqref{eqn:datatype} such that each array cell contains not just a real number but also an array:}
\begin{lstlisting}
  newArray: () -> Array              
  newRow: Array -> Row
  newColumn: Array -> Column
  entry: (Array,Row,Column) -> real  

  newNestedArray: (Array,Row,Column) -> NestedArray
  newNestedRow: NestedArray -> NestedRow
  newNestedColumn: NestedArray -> NestedColumn
  nestedEntry: (NestedArray,NestedRow,NestedColumn) -> real 
\end{lstlisting}
Then, we may change the first implementation of the infinite relational model such that 
a person is represented by a nested row while a row denotes a sort of people; similarly,
a movie is represented by a nested column while a column denotes a genre of movies.
From this interface we can directly read off the DAG for random block matrices given in Example~\ref{ex:unknown}(b).

{If such a complex generative model is suitably exchangeable (see also~\cite{statonetal2017pps,statonetal2018icalp}), then our theorem says that it could just have well been implemented by sampling from uniform distributions, and thus the conditional independence relationships can be properly understood. For instance, for the nested version of the infinite relational model from above, our theorem provides its equivalent variant with better independence properties that provide the definitions of both \lstinline|entry| and \lstinline|nestedEntry| operations. Note that the \lstinline|entry| operation depends only on the row and column indices of the outer matrix, while the \lstinline|nestedEntry| operation uses all of the row and column indices of the outer and nested matrices. Only a fine-grained representation theorem is able to handle operations depending on different parts of the indices.}

\begin{remark}
  In the 2-dimensional situation there are many further things to be said about programming and inference with Aldous-Hoover representations and generative models, for instance:
  \begin{itemize}\item in terms of computability, the Aldous-Hoover representation of a generative model need not be computable in general  \cite{ackermanetal2018graphons,freer-roy2012apal};
  \item in terms of estimation, by placing some smoothness assumptions on the functions involved, there are optimal algorithms for estimating the Aldous-Hoover representation (e.g.~\cite{glz-graphon}). In particular, methods such as those in ~\cite{glz-graphon} exemplify the usefulness of having functional representations of the type pursued in this work. 
  \end{itemize}
  It would be interesting and useful to extend these developments to more complex DAG-exchangeable settings, but we will not explore this further here.
\end{remark}

\section{Examples}\label{sec:setup}
\newcommand{\hide}[1]{}

In this section we consider examples of DAG-exchangeable random arrays (Definition~\ref{def:randomarray}) and a fine-grained generalization (Definition~\ref{def:fg-randomarray}).

Our first example shows that
DAG-exchangeability generalizes several popular notions of exchangeability from the literature, when we choose $G$ appropriately. 

\begin{example}\label{ex:known}
        Most discretely-indexed stochastic processes can be viewed as $G$-type random arrays for some DAG $G$. We illustrate this perspective with basic examples from the literature.
	\begin{itemize}
                \item[(a)] {\it de Finetti Sequences:} The most common discretely-indexed stochastic processes are $G$-type random arrays where $G$ is the graph with only a single vertex $v$ and no edges. The index set $\indices G$ in this case is $\mathbb{N}^{\{v\}} \simeq \mathbb{N}$. Thus, these $G$-type random arrays are $\mathbb{N}$-indexed families of random variables. {Every permutation on $\mathbb{N}$ is a $G$-automorphism.} In this case, DAG-exchangeability becomes the standard notion of exchangeability for random sequences in de Finetti's classic result.
		
		\item[(b)] {\it Aldous-Hoover Arrays:} The graph with two vertices ${r,c}$ and no edges
                        corresponds to infinite random matrices or random arrays indexed by $\N^2$. In other words, the {multi-}index set $\indices G$ is $\mathbb{N}^{\{r,c\}} \simeq \mathbb{N}^2$. Thus, a $G$-type {multi-}index is a pair of two numbers, one denoting a row index and the other denoting a column index. These $G$-type random arrays have the form $(X_{n,m} : n,m \in \mathbb{N})$ and are random matrices with countably many rows and columns. {A $G$-automorphism $\tau$ corresponds to a pair of permutations $\pi,\pi'$ on $\mathbb{N}$, with $\pi$ acting on the row index and $\pi'$ acting on the column index.} In this case, DAG-exchangeability becomes Aldous-Hoover (separate) exchangeability.
		
		\item[(c)] {\it Hierarchical Exchangeability:} Let $r_1,\ldots,r_\ell$ be nonnegative integers. Austin and Panchenko studied a stochastic process indexed by a tuple of paths over $\ell$ countably-branching trees that have heights $r_1,\ldots,r_\ell$, respectively~\cite{austin:panchenko:2014}. Formally, this process is a family of random variables of the following form:
		$$
                        (X_\aa : \aa \in \mathbb{N}^{r_1} \times \ldots \times \mathbb{N}^{r_\ell}).
		$$
                This stochastic process is a $G$-type array for the DAG $G$ in {\bf Figure~\ref{fig:DAG:austin-panchenko}}. 
                So 
                $$
                        G = {\{v^{(i)}_j \mid 1 \leq i \leq \ell,  1 \leq j \leq r_i\}}
                $$ 
                and
		$$
		\mathbb{N}^G
		\simeq \mathbb{N}^{r_1} \times \ldots \times \mathbb{N}^{r_\ell}.
		$$
		Thus, $(X_\aa : \aa \in \mathbb{N}^{r_1} \times \ldots \times \mathbb{N}^{r_\ell})$ is the same thing as a $G$-type array. In this case, DAG-exchangeability is the same thing as hierarchical exchangeability.
	\end{itemize}
\end{example}

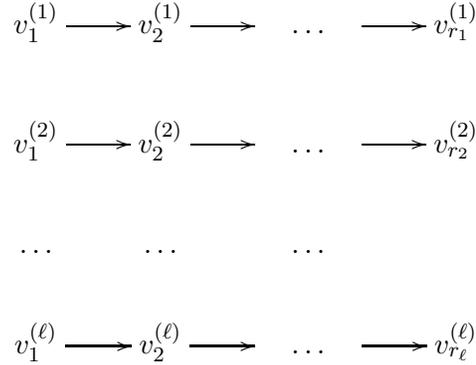
\begin{figure}
	$$
	\xymatrix{
		v^{(1)}_1 \ar[r]& v^{(1)}_2 \ar[r]&\quad\ldots\quad \ar[r]& v^{(1)}_{r_1} &\\
		v^{(2)}_1 \ar[r]& v^{(2)}_2 \ar[r]&\quad\ldots\quad \ar[r]& v^{(2)}_{r_2} &\\
		\ldots &\ldots & \ldots & \\
		v^{(\ell)}_1 \ar[r]& v^{(\ell)}_2 \ar[r]&\quad\ldots\quad \ar[r]& v^{(\ell)}_{r_\ell} &\\
	}
	$$ 
	\caption{The DAG for multi-path-indexed random arrays in \cite{austin:panchenko:2014}.}
	\label{fig:DAG:austin-panchenko}
\end{figure}

Of course, our {framework} is not limited to just recasting well-known exchangeable stochastic processes. Its recipe for defining multi-indices via a DAG makes it easy to define a random array with unusual multi-indices. Furthermore, by moving from random arrays to random array collections, we can express multiple random-variable families whose multi-index sets are related.


\begin{example}\label{ex:unknown}
	In order to illustrate the generality of our setting, we present some other instances of DAG-exchangeable arrays that are not mainstream in the exchangeability literature. We mention again that the framework of \cite{crane2017relative} and \cite{crane2018edge} can be used to derive DAG-exchangeability as in our appendix. Thus these examples, except for the last part of (b), fit into their abstract framework as well.
\begin{itemize}
\item[(a)]{\it Sequences of Random Matrices:}
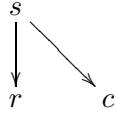
\begin{figure}[ht]
	$$ 
	\xymatrix{ 
		s\ar[d]\ar[rd] &  \\ 
		r & c}
	$$
	\caption{The DAG for sequences of random matrices.}
	\label{fig:DAG:matrix sequence}
\end{figure}
                The multi-index set for the DAG $G$ in {\bf Figure~\ref{fig:DAG:matrix sequence}} is \mbox{$\indices G = \mathbb{N}^{\{s,r,c\}}$} which can be thought of as an infinite sequence (arbitrarily labeled) of matrices of the type found in Example \ref{ex:known}(b).   For $\aa \in \indices G$, the number $\aa(s)$ determines which matrix to look at, while $\aa(r)$ and $\aa(c)$ are the row and column numbers of the matrix. When these matrices are DAG-exchangeable, the sequence is an exchangeable sequence, and each matrix is a separately exchangeable Aldous-Hoover array. Note that two entries, say $X^{(s_1)}_{r_0,c_0}$ and $X^{(s_2)}_{r_0,c_0}$, of two different arrays in this sequence, which are in the same position $(r_0,c_0)$, are only related through the exchangeability of the sequence $(s_1,s_2,...)$, and not through their position $(r_0,c_0)$. Thus this structure has a different partial exchangeability than a three-dimensional separately exchangeable Aldous-Hoover array. For instance, DAG-exchangeability permits the use of different permutations for the rows and columns of $X^{(s_1)}$ and those of $X^{(s_2)}$, while the three dimensional separate exchangeability forbids it.

Sequences of operators naturally arise in mathematical physics-- see for instance Ch. 6.2 of \cite{bratteli1996operator}. A specific example is an exchangeable  sequence of non-Hermitian random matrices with separately exchangeable entries \cite{ bordenave2011nonHermitian, bordenave2012around}.

 

\item[(b)] {\it Random Block Matrices:} 
\begin{figure}[ht]
	\centering
	\includegraphics[height=2.4in]{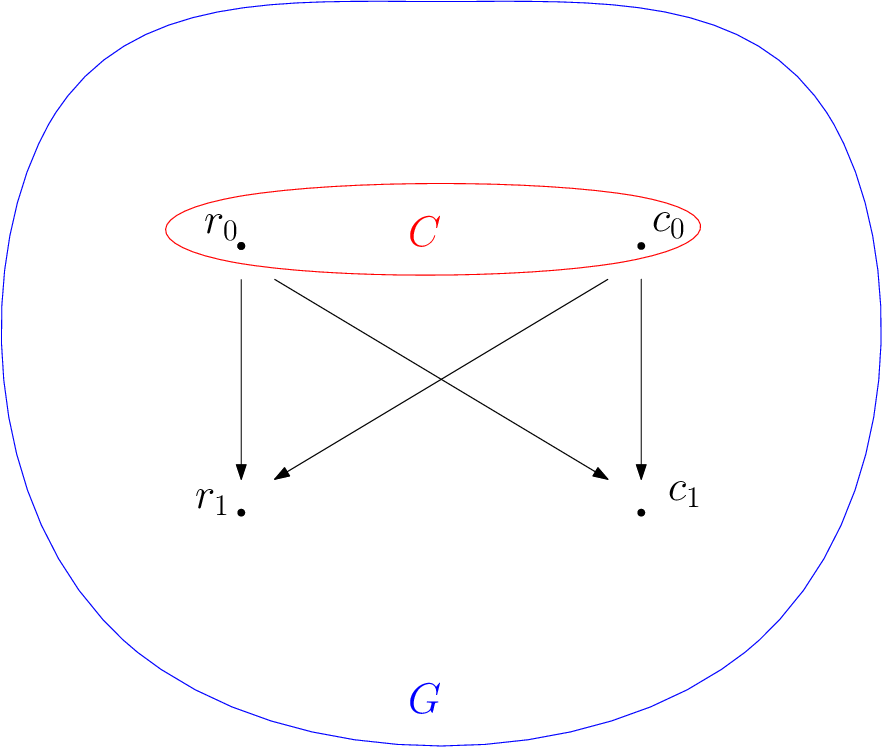}
	\caption{The DAG for random block matrices; $\CC=(C,G)$.}
	\label{fig:DAG:random-block-matrices}
\end{figure}
%
Consider the DAG $G$ in {\bf Figure~\ref{fig:DAG:random-block-matrices}}.
                The multi-index set is $\indices G = \mathbb{N}^{\{r_0,c_0,r_1,c_1\}}$, which can be understood as indices of an infinite matrix each of whose entry is again an infinite matrix. For $\aa \in \indices G$, the pair $(\aa(r_0),\aa(c_0))$ specifies the row and column of the outer matrix, and $(\aa(r_1),\aa(c_1))$ those of the nested matrix. Thus, in a random $G$-type array, each random variable $X_\aa$ stores the value of the $(\aa(r_1),\aa(c_1))$-th entry of the nested matrix, which is itself stored at the $(\aa(r_0),\aa(c_0))$-th entry of the outer nesting matrix. {We want to point out that} if in Example~\ref{ex:known}(c), one takes $\ell=2$ and $r_1=r_2=2$, then the multi-index set is the same as in this example. Thus, as a stochastic process, this example is just a special case of Example~\ref{ex:known}(c). However, as an {\it exchangeable} stochastic process, this relationship no longer holds. The presence of the additional directed edges here means that a random $G$-type array should satisfy more symmetries than Example~\ref{ex:known}(c), particularly those symmetries that are expected to hold for exchangeable random block matrices.
                

Now, let $C=\{r_0,c_0\}$, a closed subset of $G$. A small generalization of the random block matrix, similar to our example in Section~\ref{sec:probprog}, is a random structure that is simultaneously a random matrix (Example~\ref{ex:known}(b)) and a random block matrix. This can be thought of as a random matrix where each cell contains both a value in $\mathcal X$ and another random matrix. It comprises both a $C$-type random array and a $G$-type random array. In other words, it is a $(C,G)$-type random array collection.

\item[(c)]{\it Random Block Matrices and Sequences:}
One can use all the previous examples to build new examples. For instance, in {\bf Figure \ref{fig:bm-seq1}}, all three DAGs give the multi-index set $\mathbb{N}^{\{s, r_0,c_0,r_1,c_1\}}$. The edge sets of the three DAGs, however, lead to three different random structures under DAG-exchangeability. The left side can be thought of as a sequence of random block matrices. But if one removes the edge from $s\to c_0$ (see {\bf Figure \ref{fig:bm-seq2}}), then the DAG-exchangeable array is better 
 thought of as one single random block matrix with rows subject to two-level hierarchical exchangeability.  The right  figure can be thought of as a block matrix such that in each entry of each inner matrix, one finds a sequence of random variables (rather than a single random variable), thus it is a block matrix of sequences. If one removes the edge from $c_1\to s$ in the middle figure, then one can still view it as a block matrix of sequences, but the distribution of the sequence no longer depends on which `inner' column it is associated with.  
 Finally, the figure on the right has many natural interpretations in terms of DAG-exchangeability. We invite the reader to ponder upon the interesting different interpretations for the associated DAG-exchangeable arrays in this case.
 
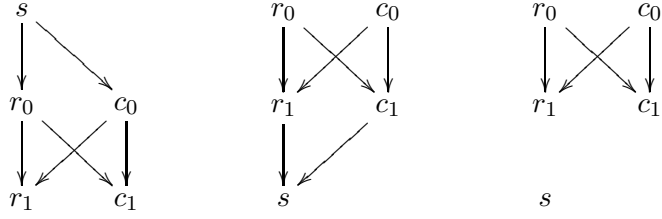
\begin{figure}[ht]
	$$ 
	\xymatrix{ 
		s\ar[d]\ar[rd] & \\ 
                r_0\ar[d]\ar[rd] & c_0\ar[d]\ar[ld] \\
                r_1 & c_1}
        \qquad\qquad
	\xymatrix{ 
                r_0\ar[d]\ar[rd] & c_0\ar[d]\ar[ld] \\
                r_1\ar[d] & c_1\ar[ld]\\ 
                s &}
        \qquad\qquad
	\xymatrix{ 
                r_0\ar[d]\ar[rd] & c_0\ar[d]\ar[ld] \\
                r_1 & c_1\\
                s &}
	$$
	\caption{The DAGs for different extensions of random block matrices.}
        \label{fig:bm-seq1}
\end{figure}
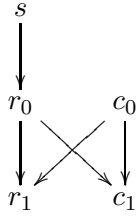
\begin{figure}[ht]
	$$ 
	\xymatrix{ 
                s\ar[d] & \\ 
                r_0\ar[d]\ar[rd] & c_0 \ar[d]\ar[ld] \\
                r_1 & c_1}
	$$
	\caption{The DAG for random block matrices subject to two-level hierarchical exchangeability.}
        \label{fig:bm-seq2}
\end{figure}

\item[(d)] {\it Random Walls:} 
Here is another example of a random array collection with {$\CC \neq (G)$}. Consider the graph $G$ consisting of three vertices $x,y,z$ and no edges. {Define $\CC$ as follows:} 
$$
                \CC = (C_{xy}, C_{yz}, C_{zx}),\qquad
                C_{xy} = \{x,y\},\qquad
                C_{yz} = \{y,z\},\qquad
                C_{zx} = \{z,x\}.
$$
A $\CC$-type random array collection consists of three random variable families, namely, $\bX_{C_{xy}}$, $\bX_{C_{yz}}$ and $\bX_{C_{zx}}$ (see {\bf Figure \ref{fig:walls}}). These families use different yet related multi-index sets, $\mathbb{N}^{\{x,y\}}$, $\mathbb{N}^{\{y,z\}}$, and $\mathbb{N}^{\{z,x\}}$, respectively. A good way to understand this array collection is to imagine a $3$-dimensional grid at points in $\mathbb{N}^{\{x,y,z\}}$. The collection associates a random variable for each point in the $xy$, $yz$ and $zx$ planes with the respective missing coordinate set to $0$ (see {\bf Figure \ref{fig:wallpic}}). Viewing the tuple $\mathbb{X}=(\bX_{C_{xy}},\, \bX_{C_{yz}}, \bX_{C_{zx}})$ in this way, rather than just as three $2$-dimensional random arrays, makes it easy to state and study symmetries which involve all three families, as we explain soon.
\begin{figure}[ht]
	\centering
	\includegraphics[height=2in]{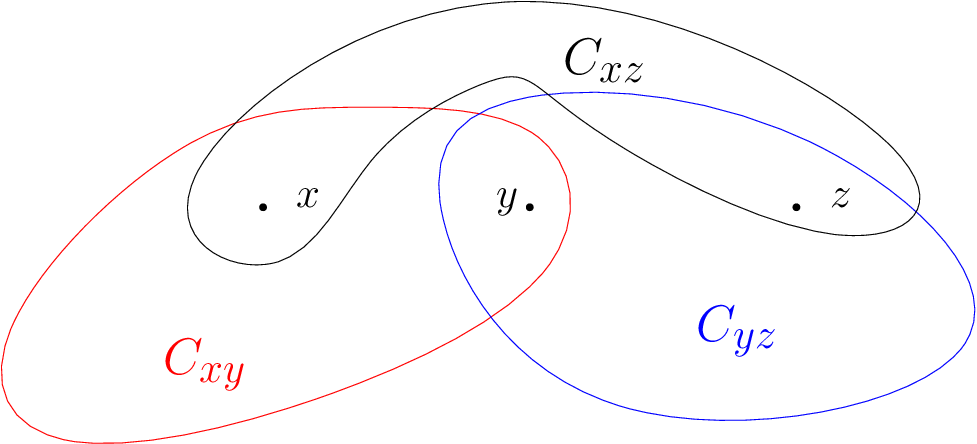}
	
	\caption{The DAG for random walls; $\CC=\{C_{xy}, C_{xz},C_{yz}\}$.}
	\label{fig:walls}
\end{figure} 
\begin{figure}[ht]	
	\centering
	\includegraphics[height=2in]{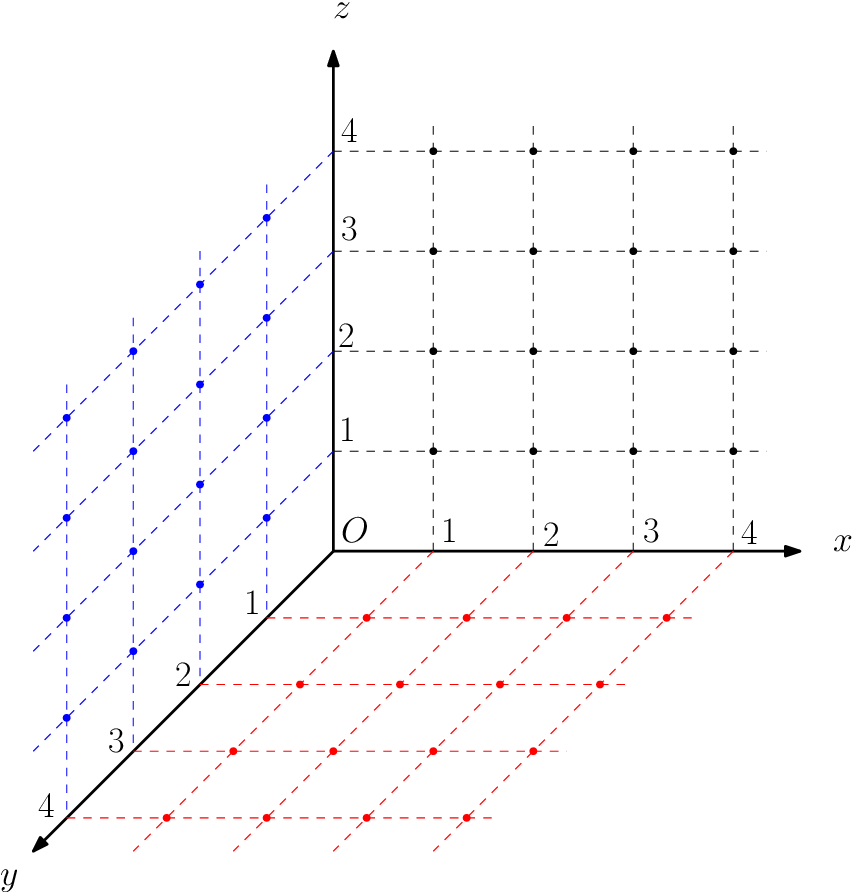}
	\caption{The multi-index set for random walls.}\label{fig:wallpic}
	Axes are not part of the walls, i.e., the walls have no intersections.
\end{figure}
\end{itemize}  
\end{example}




\begin{example}\label{ex:G-auto}
This example further illustrates the notion of $G$-automorphism.
\begin{itemize}
\item[(a)]{\it Nested Sequences:} When $G$ is just a single edge $(v_1 \to v_2)$, the random array with multi-indices in $\mathbb{N}^G$ represents a random sequence whose elements are again sequences. A bijection $\tau$ on the multi-index set $\mathbb{N}^G$ is a $G$-automorphism if and only if it is of the form
$$
        \tau[(v_1,v_2) \mapsto (i,j)] = [(v_1,v_2) \mapsto (\pi(i),\pi'_i(j))]
$$
for some permutations $\pi, \pi'_i$ on $\mathbb{N}$. Here $(v_1,v_2) \mapsto (i,j)$ represents a multi-index in $\mathbb{N}^G$ mapping $v_1$ and $v_2$ to $i$ and $j$, respectively. 
Note the dependency of $\pi'_i$ on the value $i$ of $v_1$. This dependence allows $\tau$ to use different permutations for $v_2$ according to different values of $i$. However, when the edge $\overrightarrow{v_1v_2}$ is removed from $G$, this dependency is no longer allowed: in this case, for a bijection $\tau$ to be a $G$-automorphism, it should have the form
$$
        \tau[(v_1,v_2) \mapsto (i,j)] = [(v_1,v_2) \mapsto (\pi(i),\pi'(j))]
$$
for some permutations $\pi, \pi'$ on $\mathbb{N}$ precisely as in Example \ref{ex:known}(b). Note the use of a single bijection $\pi'$ for all values $i$ of $v_1$, when the edge $\overrightarrow{v_1v_2}$ is removed.

\item[(b)]{\it Sequences of Random Matrices (revisited):} When $G$ is the DAG in {\bf Figure~\ref{fig:DAG:matrix sequence}}, the following bijection $\tau$ on $\mathbb{N}^G$ is a $G$-automorphism: 
$$
        \tau([(s,r,c) \mapsto (i,j,k)]) = [(s,r,c)\mapsto (i,\pi_i(j),k)].
$$
Here $[(s,r,c)\mapsto (i, j, k)]$ is the multi-index in $\mathbb{N}^G$ mapping vertices $(s,r,c)$ to natural numbers $(i,j,k)$, and $\pi_n$ is the permutation on $\mathbb{N}$ that cycles the first $n$ numbers (i.e., $\pi_n(1) = 2$, $\pi_n(2) = 3$, $\ldots$, $\pi_n(n) = 1$, and $\pi_n(m) = m$ for $m > n$). As we previously explained, a random array with indices in $\mathbb{N}^G$ is a sequence of random matrices. The $G$-automorphism $\tau$, in this example, permutes the rows of these matrices, but the way it does so depends on the position of a matrix in the sequence. If we remove the edge $\overrightarrow{sr}$ from $G$, this dependence is no longer permitted, so that $\tau$ stops being a $G$-automorphism. On the other hand, removing the other edge $\overrightarrow{sc}$ from $G$ is harmless; $\tau$ continues to meet the conditions of being a $G$-automorphism.

\item[(c)] {\it Random Block Matrices and Sequences (revisited):}
Consider the middle multi-index set in {\bf Figure \ref{fig:bm-seq1}}. The allowable $G$-automorphisms for this multi-index set are combinations of (a) permuting rows of blocks, (b) permuting columns of blocks, (c) permuting rows within a given block, (d) permutating columns within a given block, and (e) permuting a sequence stored in the entry of a nested matrix.

\item[(d)] {\it $G$-automorphism:} More generally, for any finite DAG $G$, a $G$-automorphism $\tau$ always has the form of applying a permutation to the number associated with each vertex by a given multi-index. The choice of permutation for each vertex is allowed to vary, but only in a way consistent with the structure of $G$. When $\overrightarrow{w_1v},\ldots,\overrightarrow{w_pv}$ are all the incoming edges to a vertex $v$ in $G$ (i.e., $w_1,\ldots, w_p$ is the set of parents of $v$), the permutation for $v$ should have the form $\pi_{(n_1,\ldots,n_p)}$, where the subscripts are the numbers assigned to $w_1,\ldots,w_p$ by a given multi-index $\alpha$, i.e., $\tau(\alpha)(v) = \pi_{(\alpha(w_1),\ldots,\alpha(w_p))}(\alpha(v))$.
\end{itemize}
\end{example}

As remarked in the introduction, our final example illustrates that the multi-index sets of DAG-type random arrays also have a natural infinitely-branching DAG structure.

\begin{example}\label{ex:infdag}
	Ignoring the edges in {\bf Figure \ref{fig:wallpic}} which are merely a visual aid, the vertex set is a natural infinite multi-index set corresponding to the DAG-exchangeable array of {\bf Figure \ref{fig:walls}}. When DAGs have directed edges, the principle still holds: there is a natural infinitely-branching DAG $G'=(V',E')$ underlying the multi-index set of any DAG-exchangeable array for some finite DAG $G=(V,E)$.

        The infinitely-braching DAG $G'$ has a vertex set which replaces each vertex $v$ in $G$ by a countably infinite number of vertices (considered copies of the original $v$). Its edge set is
        	 chosen such that if there is an edge from a copy of $w$ to a copy of $v$ in $G'$, then there is an edge $\overrightarrow{wv}$ in $G$. The precise definition of $G'$ requires a few notations. For each vertex $v \in V$, let 
        \begin{align*}
                V_v & = \{w \in V\,:\,\text{there is a path (possibly 0-length) from $w$ to $v$ in $G$}\},
                &
                \mathbb{I}_v & = \mathbb{N}^{V_v}.
        \end{align*}
        Here, $\mathbb{I}_v$ represents the set of copies of $v$, which together replace the vertex $v$. Thus, $V'=\bigcup_{v\in V} \mathbb{I}_v$. Note that every edge $\overrightarrow{wv}$ in $E$ induces a map from vertices in $\mathbb{I}_v$ to those in $\mathbb{I}_w$ in the graph $G'$. The map transforms a vertex 
$\alpha \in \mathbb{I}_v$ to $\alpha|_{V_w} \in \mathbb{I}_w$, the restriction of $\alpha$ to the sub-domain $V_w$. In $G'$, there is a directed edge from $\alpha \in \mathbb{I}_w$ to $\beta \in \mathbb{I}_v$ if and only if there is a directed edge from $w$ to $v$ in $E$ and the restriction map induced by this edge maps $\beta$ to $\alpha$.

\begin{itemize}
        \item[(a)]{\it $G'$ for Hierarchical Exchangeability:} {The infinite graphs for Austin-Panchenko arrays are collections of infinitely-branching trees. See {\bf Figure~\ref{fig:AP}}. Concretely, consider such an array for the DAG $G$ in {\bf Figure~\ref{fig:DAG:austin-panchenko}}. Let $w_1,\ldots,w_\ell$ be the the terminal vertices of $\ell$ paths in $G$; in the figure, they are labeled by $v^{(1)}_{r_1},\ldots,v^{(\ell)}_{r_\ell}$. The graph $G'$ in this case consists of $\ell$ infinitely-branching trees of depths $r_1,\ldots,r_\ell$, respectively. The multi-indices for the array are tuples $(\alpha_1,\ldots,\alpha_\ell)$ of vertices of $G'$ such that $\alpha_i \in \mathbb{I}_{w_i}$ for $1 \leq i \leq \ell$. They can also be understood as tuples of $\ell$ paths in $G'$, where the $i$-th path starts from the root of the $i$-th tree and repeatedly moves toward the leaves by taking the $v^{(i)}_j$-th child at step $j$ until the path hits a leaf. Also, $\mathbb{I}_{w_i}$ is isomorphic to $\N^{r_i}$ for all $i$ in this case. Thus, the multi-indices just defined are precisely the elements of $\N^{r_1}\times\cdots\times\N^{r_\ell}$, the multi-index set that we have used to describe Austin-Panchenko arrays thus far.}
\begin{figure}[ht]
	\centering
	\includegraphics[height=2.2in]{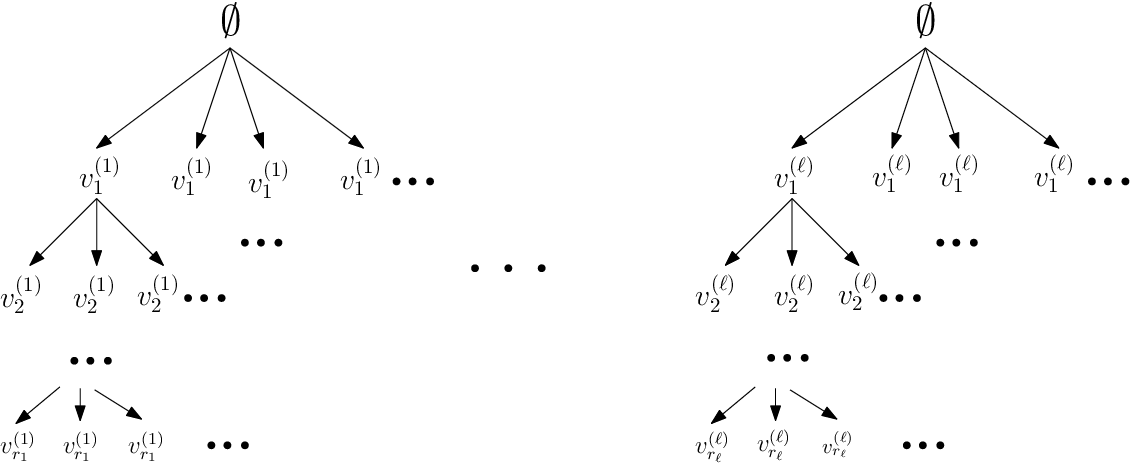}
	\caption{An Austin-Panchenko forest with $\ell$ trees.}
	\label{fig:AP}
\end{figure}

\item[(b)]{\it $G'$ for Random Block Matrices:}
{As shown in {\bf Figure \ref{fig:bmi}}, the $G'$ corresponding to random block matrices has infinitely many copies of $r_0,c_0,r_1$ and $c_1$, respectively. The copies of $r_0$ and $c_0$ correspond to the rows and columns of the outer matrix, and those of $r_1$ and $c_1$ to the rows and columns of the inner nested matrices. The latter copies of $r_1$ and $c_1$ are grouped when they belong to the same nested matrix, and the copies in the same group have incoming edges from one copy of $r_0$ and one copy of $c_0$, which express the position of the block (or inner matrix) within the outer matrix. The multi-indices in this case are pairs of copies of $r_1$ and $c_1$ that belong to the same group.} 
\begin{figure}[ht]
	\centering
	\includegraphics[height=2.5in]{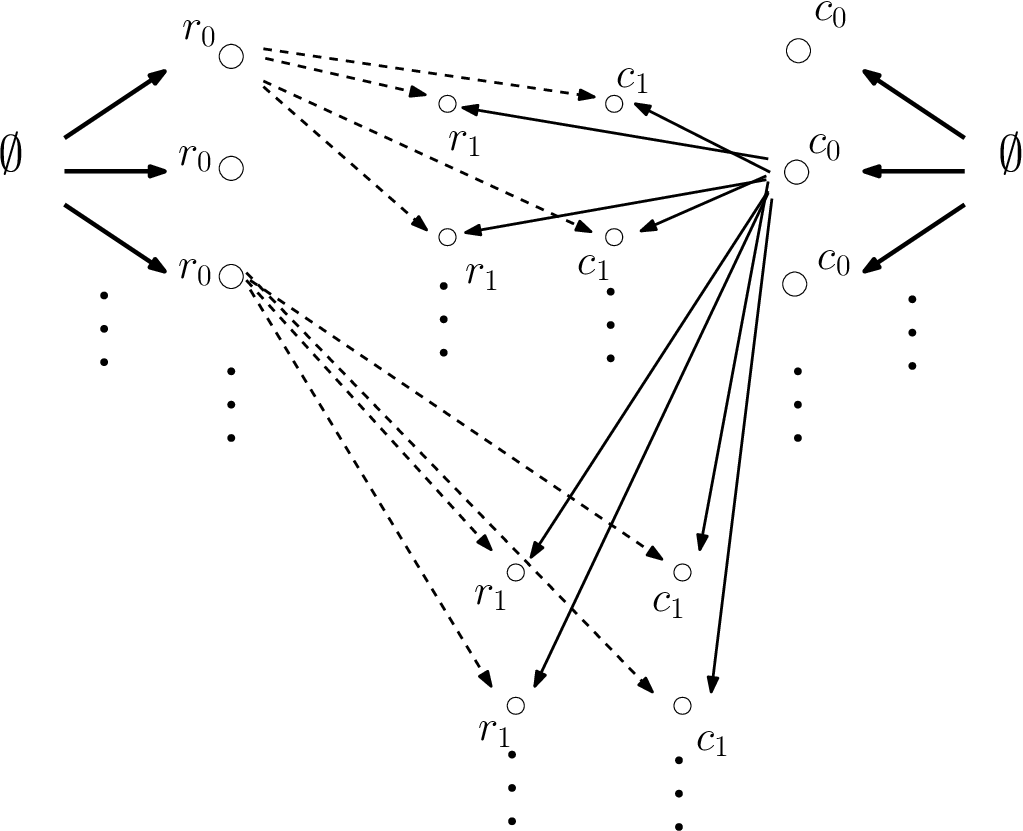}	
	\caption{Part of the infinitely-branching DAG $G'$ corresponding to random block matrices.}
	\label{fig:bmi}
\end{figure}
	\end{itemize}
\end{example}



\hide{
\subsection{Overview of Infinitary DAGs and Their Finite Presentations}\label{sec:setup:overview}

\hy{This section is indeed confusing. The complication comes because we tried to justify the use of DAG in the title of the paper. It may be better just to say that the index set is $\mathbb{N}^G$ for some DAG $G$ in this part of othe paper.}

To begin, we recall the index sets of four well-known exchangeable random arrays, and explain how their elements correspond to subgraphs of infinitary DAGs of finite depth; these subgraphs will serve as indices of the random arrays we will consider. 
{\it Infinitary} here means each vertex, other than a terminal vertex, has infinitely many outgoing edges.
For (a) an exchangeable sequence of de Finetti type, (b) a row-column exchangeable array and (c) a general $\ell$-dimensional exchangeable array of Aldous-Hoover type, the index sets are respectively given by
$$
        \text{(a)}\ \ \N\qquad\qquad\text{(b)}\ \ \N\times\N\qquad\qquad \text{(c)} \ \ 
        \underbrace{\N\times\cdots\times\N}_{\ell \text{ times}}.
$$
In the hierarchical-exchangeability setting of \cite{austin:panchenko:2014}, case (c) is generalized so that each separately exchangeable `dimension' of the array is indexed, not by numbers in $\N$, but rather by sequences of numbers in $\N^r$. The index set in this case is
$$
        \text{(c')} \ \ \N^{r_1}\times\cdots\times\N^{r_\ell}.
$$

All four index sets above are associated with infinitary DAGs, in fact forests or collections of trees,
of finite depth. A given index in each of the four index sets then corresponds to a collection of leaves of the associated DAG (one leaf from each tree in the forest). While it is possible to also view indices of general DAGs as a collection of ``terminal vertices'', for complete generality, it is much more convenient to associate a subgraph to this collection of leaves-- this subgraph will just be a collection of paths (one path from each tree in the forest). For instance, the set $\N$ in (a) is associated with a tree of depth $1$ whose root has infinitely many children, i.e. a single tree in {\bf Figure} 1, and each index $\alpha\in\N$ corresponds to a path from the root to the $\alpha$-th leaf. The index set in (c') is associated with a DAG consisting of $\ell$ infinitary trees of depths $r_1,\ldots,r_\ell$, respectively. The element
$$
\alpha = \Bigg(\left(v^{(1)}_1,\ldots,v^{(1)}_{r_1}\right),\ldots,\left(v^{(\ell)}_1,\ldots,v^{(\ell)}_{r_\ell}\right)\Bigg) \in \N^{r_1}\times\cdots\times\N^{r_\ell}
$$
corresponds to a subgraph made out of $\ell$ paths, where the $i$-th path starts from the root
of the $i$-th tree and repeatedly moves toward the leaves by taking the $v^{(i)}_j$-th child at step $j$
until the path hits a leaf.

\begin{figure}[ht]
	\centering
	\includegraphics[height=2.4in]{AP.eps}
	\caption{An Austin-Panchenko forest with $\ell$ trees.}
	\label{fig:AP}
\end{figure}
The presence of such an associated DAG and the correspondence between indices and subgraphs is not accidental.  There is a general method for constructing an index set by first building an infinitary DAG of finite depth. The four index sets that we discussed above can all be naturally constructed by this method.

Assume that a \emph{finite} DAG $G$ is given. This $G$ depicts the skeleton of an \emph{infinitary} DAG to be built. In case (a), the finite DAG $G$ is just a single vertex $v$ and no edges; in case (b), $G$ is the DAG with two vertices, $r$ (row) and $c$ (column), and no edges; in case (c'), it is the DAG consisting of $\ell$ disjoint paths of length $r_1,\ldots,r_\ell$ (see {\bf Figure} \ref{fig:DAG:austin-panchenko}).

Next, we generate an infinitary DAG $G'$ from $G$ by making infinitely many copies of vertices of $G$ and connecting these copies by edges in an appropriate manner which we now describe. For every vertex $v$ of $G$, define the {\it downset} of $v$ to be
$$
        D_v \defeq \{w ~:~ \text{there is a path from $w$ to $v$ in $G$}\}.
$$
\tcd{Here a path may be zero-long, and $D_v$ always contains $v$. For instance, if $G$ is just a single vertex $v$ without any edges, $D_v = \{v\}$.} The vertex set of $G'$ is $\bigcup_v \N^{D_v}$ where $\N^{D_v}\defeq\N^{|D_v|}$. That is, each vertex of $G'$ is a function $$\aa:D_v\to \N,$$ for some $v$. The right way to understand such a vertex $\aa$ is as one of the infinitely many copies of $v$ of $G$ that is assigned an identifier $\alpha$. This identifier is then used to connect the various copied vertices with directed edges. In particular, there is a directed edge from $\alpha \in \N^{D_v}$ to $\beta \in \N^{D_w}$ in $G'$ if $(v,w)$ is a directed edge of $G$ and $\beta|_{D_v} = \alpha$. For each of (a), (b), (c) and (c') from above, the associated infinitary DAGs are constructed this way, and they turn out to be forests (see {\bf Figure} \ref{fig:AP}). We will soon see examples where the infinitary DAGs are not forests (see {\bf Figure} \ref{fig:bmi}).

Finally, we set the index set to be $$\{\aa ~: \aa \text{ is a function from the vertex set of }G\text{ to }\N\}.$$
One should view this as a collection of subgraphs of $G'$ in the following way: For each $\alpha : G \to \N$,
 the subgraph of $G'$ associated to $\alpha$ has vertex and edge sets
\begin{align*}
        V'' & = \Big\{\alpha|_{D_v} ~:~ v \in G\Big\},
        &
        E'' & = \Big\{(\alpha,\beta)  ~:~ (\alpha,\beta)\,\text{ is an edge in $G'$}\,
        \text{ and }\,\alpha,\beta \in V'' \Big\}.
\end{align*}

}




\section{Main Result}\label{sec:result}
Let $G$ be a finite DAG and recall that $\AA_G$ denotes the set of all closed subsets of $G$.
By definition, a $G$-automorphism $\tau$ induces a bijection on the $C$-type multi-indices $\bb\in \indices C$ for any $C\in\AA_G$:
$$
\tau(\bb) \defeq  \res{\tau(\aa)}C,\quad \mbox{ for some/any $\aa \in \indices G$ such that $\res \aa C=\bb$.}
$$
Slightly abusing notation, we reuse $\tau$ to denote this induced map. Also, a bijection $\tau\colon \indices C\to \indices C$ acts on a $C$-type random array $\bX_C = (X_\aa : \aa \in \indices C)$ by 
$$
\tau(\bX_C)\defeq {(X_{\tau(\aa)} : \aa \in \indices C)}.
$$ 


\begin{defi}
        Let $\CC$ be a sequence of distinct closed subsets of $G$.
	A $\CC$-type random array collection $\X=(\bX_C : C \in \CC)$ is \textbf{DAG-exchangeable} if it is equal in distribution to $( \tau(\bX_C) : C \in \CC)$ for every $G$-automorphism $\tau$, that is,
	$$ \X \ed (\tau(\bX_C) : C \in \CC).$$
\end{defi}


We denote the set of all multi-indices over some $D \in \AA_C$ by
$$
        I_C  \defeq \bigcup_{D \in \AA_C}\,\N^D.
$$
Also, we introduce the following notations for multi-indices $\aa \in I_G$:
\begin{align*}
        \dom & \colon I_G \to \AA_G,
        &
        \dom(\aa) & \defeq \text{the set of vertices where $\aa$ is defined},
        \\ 
        \rstr & \colon I_G \to 2^{\AA_G},
        &
        \rstr(\aa) & \defeq \{\aa|_C ~:~ C \in \AA_G\},
        \\ 
        \srstr & \colon I_G \to 2^{\AA_G},
        &
        \srstr(\aa) & \defeq \rstr(\aa) \setminus\{\aa\}.
\end{align*}
The $\dom(\aa)$ is the domain of the multi-index $\aa$, and the next two are about
the restrictions of $\aa$: $\rstr(\aa)$ is the set of all the restrictions, while $\srstr(\aa)$ consists
of only the strict restrictions. 

Consider the array $(U_\alpha : \alpha \in I_G)$, where the $U_\alpha$'s are i.i.d. uniform random variables.
Let
$$
\mathbf{V}_\alpha \defeq (U_\beta : \beta \in \rstr(\aa)).
$$
Then, for all $G$-automorphisms $\tau$,
$$
(\mathbf{V}_\alpha : \alpha \in I_G)
\d= 
(\mathbf{V}_{\tau(\alpha)} : \alpha \in I_G)
$$
Thus, for any sequence $\CC$ of distinct closed subsets of $G$ and any family of measurable functions $(f_C : C \in \CC)$ with $f_C : [0,1]^{\AA_C} \to \XX$, 
$$
\Big(\big(f_C(\mathbf{V}_\alpha): \alpha \in \N^C\big) : C \in \CC \Big)
$$ 
is DAG-exchangeable, where for the argument of $f_C$ we identify $U_\beta$ with the $\dom(\bb)$-coordinate of the product space $[0,1]^{\AA_C}$.

As usual, our representation theorem is the converse of the previous statement.

\begin{thm}\label{Thm 1}
If $(\bX_C : C \in \CC)$ is DAG-exchangeable, then
$$
        \Big(\big(X_{C,\aa} : \aa \in \indices C\big) : C \in \CC\Big)
        \d=
        \Bigg(\Big(f_{C}\big(U_\bb : \bb\in \rstr(\aa)\big) : \aa \in \indices C\Big) : C \in \CC\Bigg)
$$
for some family of measurable functions $(f_C : C \in \CC)$ with $f_C : [0,1]^{\AA_C} \to \XX $ and independent $[0,1]$-uniform
random variables $\bU = (U_\aa : \aa \in I_G)$.
\end{thm}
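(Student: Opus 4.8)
The plan is to mimic the probabilistic strategy of Aldous--Hoover and Austin--Panchenko, but carry it out simultaneously over the whole directed acyclic structure, processing the closed sets $C\in\AA_G$ in order of increasing size. First I would reduce to a single array: given the collection $(\bX_C : C\in\CC)$, extend $\CC$ to all of $\AA_G$ by adjoining, for each missing $D\in\AA_G$, an independent copy of some auxiliary i.i.d.\ array indexed by $\N^D$; DAG-exchangeability is preserved, and a representation for the enlarged collection restricts to one for the original. So it suffices to prove the theorem for $\CC=\AA_G$, i.e.\ to produce one function $f_C$ for every downward-closed $C$ at once, consistently. The fine-graining is precisely what makes the inductive bookkeeping work: the representation of $\bX_C$ will be built out of uniforms $U_\bb$ for $\bb\in\rstr(\aa)$, which for a strict restriction $D\subsetneq C$ are exactly the uniforms already used to represent $\bX_D$.

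The core is an induction on the poset $\AA_G$ ordered by inclusion, and the inductive step is a conditional de Finetti / Aldous--Hoover extraction along the ``top layer'' of a closed set. Fix $C\in\AA_G$ and let $v_1,\dots,v_k$ be the maximal vertices of $C$ (those with no successors inside $C$); removing any one $v_j$ leaves a closed set $C\setminus\{v_j\}$ already handled. The point of Definition~\ref{def: G-auto} and the description in Example~\ref{ex:G-auto}(d) is that a $G$-automorphism may permute the $v_j$-coordinate by a permutation $\pi_{(\alpha(w_1),\dots,\alpha(w_p))}$ depending arbitrarily on the parent coordinates, and independently across distinct parent-tuples. Conditioning on the $\sigma$-field generated by all ``lower'' randomness — concretely, on $\bX_D$ for every $D\subsetneq C$, equivalently on the $U_\bb$ with $\bb\in I_G$, $\dom(\bb)\subsetneq C$ — the array $\bX_C$ becomes, slice by slice over the parent coordinates, a family of conditionally exchangeable arrays in the remaining maximal coordinates, with the conditional law only depending on the already-constructed lower uniforms. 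A conditional version of the Aldous--Hoover theorem (as in Kallenberg, \cite{kallenberg2006probabilistic}, or the relative/coding arguments of Austin--Panchenko) then yields fresh i.i.d.\ uniforms $U_\bb$ for the $\bb$ with $\dom(\bb)=C$ and a measurable $f_C$ with $X_{C,\aa}\ed f_C\big(U_\bb : \bb\in\rstr(\aa)\big)$, consistent with the earlier $f_D$ because the lower uniforms are literally reused. One iterates up the poset; since $\AA_G$ is finite this terminates, and transfer/Borel-space arguments let one realize all the extractions on a single probability space carrying one i.i.d.\ family $\bU=(U_\aa:\aa\in I_G)$.

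There are two routine ingredients: a conditional/relative version of the Aldous--Hoover representation (existence of the coding function measurable in the conditioning variables and extra independent uniforms), and the observation that the sequence of extracted functions can be chosen consistently because each $f_C$ is fed exactly the uniforms indexed by $\rstr(\aa)$, and restricting $\aa$ to a smaller closed set $D$ just restricts this index set to $\rstr(\aa|_D)$. The first is standard once phrased correctly; the second is the structural reason the statement is true in this generality.

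The main obstacle I expect is the inductive step when a closed set $C$ has several maximal vertices $v_1,\dots,v_k$ with overlapping but non-identical parent sets: one must extract the exchangeability in all $k$ coordinates at once, respecting that each $v_j$-permutation is allowed its own dependence on $v_j$'s parents (so the symmetry group is a large ``iterated wreath-type'' group rather than a plain product of symmetric groups), while keeping the output function measurable with respect to precisely $[0,1]^{\AA_C}$ and not more. Handling this cleanly — i.e.\ peeling maximal vertices one at a time and checking that the order of peeling does not matter, or equivalently setting up a single conditional Aldous--Hoover argument adapted to the automorphism group described in Example~\ref{ex:G-auto}(d) — is where the real work lies, and it is exactly the part that goes beyond the tree case of \cite{austin:panchenko:2014}, where the ``top layer'' is a disjoint union of stars and the group genuinely factors.
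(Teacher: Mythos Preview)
Your overall strategy and your diagnosis of the obstacle are accurate, but the proposal leaves the crucial step unresolved, and the paper's argument takes a rather different route to overcome it. You propose to induct up the poset $\AA_G$, at each closed $C$ conditioning on the lower uniforms and invoking a ``conditional Aldous--Hoover'' to extract one fresh uniform $U_\alpha$ for each $\alpha\in\N^C$. The gap is that there is no off-the-shelf conditional Aldous--Hoover theorem for the automorphism group of Example~\ref{ex:G-auto}(d), and your suggested fallback---peel the maximal vertices one at a time and check that the order does not matter---runs straight into the consistency problem you yourself flag. The sentence ``consistent with the earlier $f_D$ because the lower uniforms are literally reused'' presumes what has to be proved: the marginal representations for the various $D\subsetneq C$ need to hold \emph{jointly} on a single family $\bU$, and joining them is the entire difficulty. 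The paper in fact describes essentially your induction as the ``natural alternative approach'' in the overview of Section~\ref{sec:proof} and explains why it is hard to implement directly.

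What the paper actually does is: (i) pass to a \emph{symmetry array} $\bS=(S_\alpha:\alpha\in I_G)$ with $\sigma(S_\alpha)=\FF_\alpha$, the $\sigma$-field of events invariant under $\alpha$-fixing $G$-automorphisms, and reduce the theorem to a recursive representation of $\bS$ (Proposition~\ref{prop 3}); (ii) prove the identity $\FF_\alpha=\FF_\alpha^{\sep}$ for $\alpha\in I_G$ (Lemmas~\ref{lem: symmetric variables with respect to X} and~\ref{lem:separate symmetry is null}), which says the DAG-invariant $\sigma$-fields coincide with the \emph{separately}-invariant ones, and thereby import Hoover's conditional-independence result (Proposition~\ref{prop: Hoover conditional independence}) as a black box, yielding Proposition~\ref{prop: conditional independence on disjoint graphs, 1} and Corollaries~\ref{cor: conditional independence on disjoint graphs, 2}--\ref{prop:inductive joining of distributional equalities}; and (iii) run a two-level induction---outer on $|G|$, inner on the number of terminal vertices added back to $G_0=G\setminus T$---whose sole purpose is to make the a priori inconsistent representations supplied by the outer hypothesis agree on overlaps, using the conditional independence from (ii) together with elementary joining and coding lemmas (Lemmas~\ref{lem:Identification}--\ref{lem: coding lemma}). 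The final extension from $I_G\setminus\N^G$ to $I_G$ is then a single application of the coding lemma (Lemma~\ref{lem: final piece}). The specific ingredient your proposal is missing is (ii): the reduction to separate exchangeability via $\FF_\alpha=\FF_\alpha^{\sep}$ is what converts your vague ``conditional Aldous--Hoover'' into an actual tool, and it is exactly the piece that has no analogue in the tree case of \cite{austin:panchenko:2014}.
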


	{\begin{remark}
This is the fine-grained generalization of Theorem \ref{cor 2} alluded to in the introduction. Indeed, Theorem \ref{cor 2} is a simple corollary of the above theorem, since it is just the special case $\CC=\{G\}$.
\end{remark}}

\begin{example}\label{example main} Let us illustrate the application of the above theorem with a very simple example. Consider Example \ref{ex:unknown}(a), which uses the DAG in {\bf Figure \ref{fig:DAG:matrix sequence}}. Let $\mathbb{X}$ be a $\CC$-type random array collection where $\CC=(R,C)$ for $R=\{s,r\}$ and $C=\{s,c\}$. Define
$$
        \bX' \defeq \Big( \big(X_{R,ij}, X_{C,ik}\big) : i,j,k \in \N \Big).
$$ 
This array $\bX'$ is just a way of rewriting $\X$ with straightforward adjustment on indices, and it is easy to see that $\X$ is DAG-exchangeable if and only if 
$$
        \bX' \d= 
        \Big( \big(X_{R,\pi(i)\tau_i(j)}, X_{C,\pi(i)\rho_i(k)}\big) : i,j,k \in \N \Big)
$$ 
for all permutations $\pi, \tau_i, \rho_i \in S_{\N}$ with $i \in \N$.
If $\X$ is DAG-exchangeable, Theorem \ref{Thm 1} tells us that $\bX'$ has a representation of the following form:
$$ 
        \Big( \big(X_{R,ij}, X_{C,ik}\big) : i,j,k \in \N \Big) \d=
        \Big( \big(f_R(U_{000}, U_{i00}, U_{ij0}) , f_C(U_{000}, U_{i00}, U_{i0k}) \big) : i,j,k \in \N \Big)
$$
for some measurable functions $f_R$ and $f_C$ and independent $[0,1]$-uniform random variables $U_{ijk}$ for
$i,j,k \in \{0\} \cup \N$.
\end{example}


The main result of this paper is a probabilistic proof of Theorem~\ref{Thm 1}. Our proof is based on an induction whose inductive step involves reasoning about sophisticated conditional independence, similar to other proofs in the exchangeability literature \cite{kallenberg2006probabilistic,austin:panchenko:2014}. More concretely, in the next subsection, we provide a different version of Theorem~\ref{Thm 1}, from which the theorem follows immediately. Then, we give a  detailed proof of this strengthened version of Theorem~\ref{Thm 1} in Section~\ref{sec:proof}.

\begin{remark}
In Appendix~\ref{app:model-theoretic-proof}, we provide an alternative model-theoretic proof of the special case Theorem~\ref{cor 2} using a result of Crane and Towsner on the representation of relatively exchangeable random structures~\cite{crane2017relative}. Crane and Towsner's result has been formulated and proved in a model-theoretic setting. A large part of our second proof is about translating the graph-theoretic statement of Theorem~\ref{cor 2} to a model-theoretic one in Crane and Towsner's representation theorem, and showing that after translation, the statement satisfies the conditions of Crane and Towsner, and when translated backwards, their conclusion gives the claimed representation of our theorem.
\end{remark}

\subsection{Representations of Random Arrays Induced by Symmetries}\label{sec: symmetry rv S}
We start with a motivation for this section. Consider de Finetti's theorem, the most simple and fundamental result on exchangeability. The theorem is commonly stated as, ``for any exchangeable random sequence, there is a random measure $\mu(\cdot)$ such that conditional on a realization $\mu(\omega)$, the sequence is (conditionally) i.i.d. with common distribution $\mu(\omega)$.'' The theorem was restated by \cite{aldous1981representations} as a representation in terms of uniform random variables: 

\begin{quote}
{\it\noindent For any exchangeable sequence $\bX = (X_n : n \in \N)$ on a Borel space $\XX$, there exists a measurable function $f:[0,1]^2 \to \XX$ such that for an i.i.d. sequence of uniform random variables $\bU=(U_n : n \in \N \cup \{0\}$),} 
\begin{equation}\label{eq: de Finetti} (X_n : n \in \N) \buildrel d \over = \Big(f(U_0, U_n) : n \in \N \Big).
\end{equation}
\end{quote}

Here, $U_0$ takes the role of selecting the random measure $\mu$, while each $U_n$ samples $X_n$ under the law $\mu$. If we choose $\bU$ so that \eqref{eq: de Finetti} is true almost surely instead of in-distribution by  the transfer theorem \cite[Thm 6.10]{kallenberg2002foundations}, one can easily see that $U_0$ is independent of $\bX$ given $\mu$. (See Lemma 7.1 and Theorem 1.1 in \cite{kallenberg2006probabilistic} for example. This type of result is also commonly known as the Hewitt-Savage theorem.)

In this section, we define an array associated to DAG-exchangeable arrays, which we call its \textbf{symmetry array}. It generalizes the role that the empirical random measure $\mu$ plays for exchangeable sequences. We also propose a variant of Theorem \ref{Thm 1}, which states that the uniform random variables in the representation affect the array only through this symmetry array. This corresponds to the fact that $U_0$ is independent of $\bX$ given $\mu$.

Let $\CC$ be a sequence of distinct closed subsets of $G$, and $\X=(\bX_C : C \in \CC)$
be a DAG-exchangeable $\CC$-type random array collection. 

We say that a $G$-automorphism $\tau$ of $\indices G$ \textbf{fixes $\aa\in I_G$} if 
$\tau(\aa)=\aa$. We define $\mathcal{F}_\aa$ to be the sub-$\sigma$-field of $\sigma(\X)$ consisting of $\X$-measurable events that are invariant under every $\aa$-fixing $G$-automorphism~$\tau$:
\begin{align}\label{def:F_alpha}
\begin{array}[t]{@{}r@{}}
\mathcal{F}_\aa \defeq \sigma\Big(\big\{\X^{-1}(B) ~:~ B \text{ is Borel,\, and\, }
\text{if } \big((x_\bb :  \bb\in\indices C) : C \in \CC\big) \in B \text{ and $\tau$ fixes $\aa$, }
\qquad
\\[0.5ex]
        \text{then } \big((x_{\tau(\bb)} : \bb \in \indices C) : C \in \CC\big) \in B\big\}\Big)
\end{array}
\end{align}
One should think of the $\FF_\aa$'s as the $\sigma$-fields which contain the information concerning the symmetries in the array.  In particular,  $\FF_\aa$ contains the information about the symmetries in the array which fix the multi-index $\aa$.

For instance, in Example \ref{example main}, consider $\alpha \in \N^R$ defined by $\alpha(s)=1$ and $\alpha(r)=3$. Then, $X_{R,13}$ is $\FF_\alpha$-measurable, but in general any of $X_{R,23}$, $X_{R,12}$ and $X_{C,13}$ is not. Another example is the empirical distribution $\EE$ of the sequence $(X_{R,13}, X_{C,1k} : k \in \N)$, that is, 
$$
\EE \defeq \lim_{n \to \infty} \frac{1}{n}\, \underset{k \leq n}{\sum} \delta_{(X_{R,13},X_{C,1k})}.
$$ 
By de Finetti-Hewitt Savage theorem, $\EE$ exists almost surely. One can also easily check that it is $\FF_\aa$-measurable. Let
$$
\EE' \defeq \lim_{n \to \infty} \frac{1}{n}\, \underset{k \leq n}{\sum} \delta_{X_{C,1k}}.
$$ 
This empirical distribution is $\FF_\aa$-measurable as well. In fact, it is measurable with
respect to a smaller $\sigma$-field $\FF_{\aa|_{\{s\}}} \subseteq \FF_\aa$.

\begin{remark}
Restricting $\aa$ shrinks the $\sigma$-field $\mathcal{F}_\aa$. That is, for all closed $D \subseteq \dom(\aa)$, we have that $\mathcal{F}_{\aa|_D} \subseteq \mathcal{F}_\aa$. This is because every $\aa$-fixing $\tau$ is also an $\aa|_D$-fixing $G$-automorphism and so an event invariant under the latter kind of $G$-automorphism is also invariant under the former kind.
\end{remark}


It will be convenient to encode into  random variables, the information contained in the various $\FF_\aa$'s. These random variables will later serve as a mechanism by which we make different overlapping representations consistent (in a manner later described).
Using the facts that the elements of each $\bX_C$ take values in a Borel space and that each $\FF_\aa $ is countably generated, we may define:

\begin{defi}
Given a DAG-exchangeable array collection $\X$, we define an associated
 {\bf random symmetry array} $\bS =(S_\aa : \aa \in I_G)$ (after extending the underlying probability space if needed) to be any array $\bS$ satisying
\begin{enumerate}
        \item $\sigma(S_\aa)=\mathcal{F}_\aa $ for all $\aa$, and
	\item the random array collection $(\bS_C : C \in \CC)$ 
	with \mbox{$\bS_C = (S_\aa : \aa \in \indices C)$,} satisfies
	$$
	\big((\bX_C,\bS_C) : C \in \CC\big) \d= \big((\tau(\bX_C),\tau(\bS_C)) : C \in \CC\big)
	$$
	for all $G$-automorphisms $\tau$, i.e. the collection of ordered pairs $((\bX_C,\bS_C) : C \in \CC )$ is DAG-exchangeable.
\end{enumerate}
\end{defi}
To see why such an $\bS$ exists, fix $C_0 \in \CC$ and pick $\alpha_0 \in \N^{C_0}$. There exists a random variable $S_{\alpha_0}$ that generates $\mathcal{F}_{\aa_0}$ (\cite{resnick2013probability}, Ch. 3, Exer. 13). Furthermore, since $S_{\alpha_0}$ is $\X$-measurable, there exists a measurable function $f$ such that $S_{\alpha_0} = f(\bX_C: C \in \CC)$. Now for each $G$-automorphism $\tau$, we can define $S_{\tau(\alpha_0)} \defeq f(\tau(\bX_C): C \in \CC)$. 
Repeating this procedure for each $C \in \CC$ gives $\bS = ((S_\aa : \aa \in \indices C) : C \in \CC)$, which satisfies the two required properties.


For all $\bb \in I_G$, if $\bb$ is a restriction of $\aa$, the random variable $S_{\bb}$ is $\mathcal{F}_\aa$-measurable. This is because $S_{\bb}$ is $\mathcal{F}_\bb$-measurable but the $\sigma$-field $\mathcal{F}_\bb$ is included in $\mathcal{F}_\aa$.

\begin{prop}\label{prop 3} 
	Let $\CC$ be a sequence of distinct closed subsets of $G$ and let $\bS$ be the {symmetry} array defined as above. If $(\bX_C : C \in \CC)$ is DAG-exchangeable, there exist a family of measurable functions $(h_C : C \in \AA_G)$ with $h_C : [0,1]^{\AA_C} \to \XX$ and 
a collection of independent $[0,1]$-uniform random variables $\bU = (U_\aa : \aa \in I_G)$ such that
\begin{equation}\label{equation: S and S'}
        \big(S_\aa : \aa \in I_G\big) \;\d=\; \big(S'_\aa : \aa \in I_G\big)
\end{equation}
where
\begin{equation}\label{equation: generalized representation of DAG} 
        S'_\aa \;\defeq\; h_{\dom(\aa)}\Big(\big(S'_{\bb} : \bb \in \srstr(\aa)\big),\, U_\aa \Big)
\end{equation}
for $\aa \in I_G$.
\end{prop}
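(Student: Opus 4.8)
The plan is to build the functions $h_C$ and the uniform array $\bU$ by a recursion on the "depth" of a multi-index $\aa \in I_G$, where depth means the size of $\dom(\aa)$ (or more precisely, the height of $\dom(\aa)$ in the lattice $\AA_G$). At each stage I want to express $S_\aa$ as a measurable function of the strictly-smaller-restriction symmetry variables $(S_\bb : \bb \in \srstr(\aa))$ together with a fresh uniform $U_\aa$ that is independent of everything seen so far. The engine for producing $U_\aa$ is the following standard noise-outsourcing / transfer-type fact: if $S_\aa$ is conditionally, given $\sigma(S_\bb : \bb \in \srstr(\aa))$, "spread out enough" then one can find a uniform $U_\aa$ independent of $\sigma(S_\bb : \bb \in \srstr(\aa))$ with $S_\aa = h_{\dom(\aa)}(\,(S_\bb : \bb \in \srstr(\aa)),\, U_\aa\,)$ a.s. The subtle point, and the heart of the argument, is to prove the correct \emph{conditional independence} statement that legitimizes this: namely that for a given $\aa$, conditionally on $\FF_{\text{immediate predecessors}}$, the variable $S_\aa$ is independent of the symmetry variables indexed by multi-indices that are "incomparable" to $\aa$ — this is exactly the kind of statement one extracts from DAG-exchangeability by averaging over $\aa$-fixing automorphisms, in the same spirit as the conditional independence lemmas in \cite{kallenberg2006probabilistic} and \cite{austin:panchenko:2014}.

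Concretely, the steps I would carry out are: (1) Fix an enumeration of $I_G$ that is consistent with the restriction partial order, i.e. $\bb$ a strict restriction of $\aa$ implies $\bb$ comes before $\aa$; this is possible since $\srstr(\aa)$ is always finite. (2) Process indices in this order. When we reach $\aa$, we already have (by induction) defined $U_\bb$ for all $\bb \in \srstr(\aa)$ and shown each such $S_\bb$ is a function of $(S_\gamma : \gamma \in \srstr(\bb))$ and $U_\bb$; unwinding, the collection $(S_\bb : \bb \in \srstr(\aa))$ is a function of $(U_\bb : \bb \in \srstr(\aa))$. (3) Establish the key conditional independence: conditionally on $\FF_\aa' \defeq \sigma(S_\bb : \bb \in \srstr(\aa))$, $S_\aa$ is independent of the $\sigma$-field generated by all symmetry variables with index not $\succeq$-above $\aa$ (equivalently, those not "refined by" $\aa$). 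This is proved by using $G$-automorphisms $\tau$ that fix $\aa$ (hence fix all its restrictions, so they preserve $\FF_\aa'$ and $S_\aa$) but act nontrivially, together with a de Finetti / Hewitt--Savage argument on the resulting exchangeable sequences — much as the empirical-measure discussion preceding the proposition suggests. (4) Apply the noise-outsourcing lemma (e.g. \cite{kallenberg2006probabilistic}, the transfer theorem, valued in a Borel space) relative to $\FF_\aa'$ to obtain $U_\aa$ uniform on $[0,1]$, independent of $\FF_\aa'$, and $h_{\dom(\aa)}$ with $S_\aa = h_{\dom(\aa)}((S_\bb : \bb \in \srstr(\aa)), U_\aa)$ a.s. (5) Crucially, verify that the $U_\aa$ produced at stage $\aa$ is independent not merely of $\FF_\aa'$ but of \emph{all} previously constructed $U_\bb$; this again follows from the conditional independence in step (3) once one observes that the previously constructed $U_\bb$ for $\bb$ incomparable to $\aa$ live in the $\sigma$-field against which $S_\aa$ was made conditionally independent. (6) Finally, match the distributions: define $S'_\aa$ by the recursion \eqref{equation: generalized representation of DAG} using these $h_C$ and a \emph{fresh} i.i.d. uniform array, and check by induction on the enumeration that $(S'_\aa : \aa \in I_G) \d= (S_\aa : \aa \in I_G)$ — the one-step distributional identity together with the matching (conditional-)independence structure propagates equality in distribution through the recursion.

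The step I expect to be the main obstacle is step (3), the conditional independence claim, and in particular getting the bookkeeping of the index poset exactly right: one must identify precisely which symmetry variables $S_\gamma$ are "unseen" relative to $\aa$ and argue that an $\aa$-fixing automorphism can shuffle them freely while leaving $S_\aa$ and all of $\srstr(\aa)$ untouched. The delicate cases are multi-indices $\gamma$ that share some but not all coordinates with $\aa$ (e.g. $\gamma|_D = \aa|_D$ for some nontrivial closed $D$ but $\gamma \neq \aa$): here one needs an automorphism that fixes the $D$-part but permutes the rest, and one must check that the relevant averages still converge (via de Finetti applied to the appropriate subsequence) to an $\FF_\aa'$-measurable limit. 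This is exactly where the structure of $G$-automorphisms spelled out in Example \ref{ex:G-auto}(d) — a vertex's permutation may depend only on the values at its parents — is used in an essential way, and where the argument genuinely generalizes the tree case of \cite{austin:panchenko:2014}. A secondary obstacle is ensuring the whole construction is done \emph{simultaneously and consistently} for all of $I_G$ (the fine-grained aspect): the recursion must be set up so that the same $h_C$ and the same uniforms serve every $C$-type array at once, which is why the proposition is phrased in terms of the single array $(S_\aa : \aa \in I_G)$ rather than one array per $C \in \CC$.
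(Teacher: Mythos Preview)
Your plan is coherent, and its main engine---the conditional independence in your step~(3)---is exactly right: the paper also shows (in the proof of Lemma~\ref{lem: final piece}) that
\[
S_\aa \underset{\bS_{\srstr(\aa)}}{\independent} \bigl(S_\gamma : \gamma \neq \aa\bigr)
\]
for $\aa \in \N^G$, and a version of this for general $\aa \in I_G$ is what your scheme needs. However, both your route to step~(3) and the overall architecture differ substantially from the paper's, and there is one place where your sketch hides a real difficulty.

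\textbf{Where the paper goes differently.} The paper does \emph{not} process multi-indices one at a time. It runs a two-level induction: an outer induction on $|G|$, and inside it an induction on the size $k$ of subsets $A$ of the terminal vertices $T$. The outer hypothesis supplies full representations (already with i.i.d.\ uniforms) for every proper closed sub-DAG $G_0\cup A$; the inner induction then merges these, level by level, into a single consistent representation on $I_G\setminus\N^G$, using a conditional-independence statement at the level of whole $\sigma$-fields $\FF_C$ (Proposition~\ref{prop: conditional independence on disjoint graphs, 1} and its Corollaries) together with the joining Lemma~\ref{lem:joining}. Only at the very last step is the coding Lemma~\ref{lem: coding lemma} invoked, and only for $\aa\in\N^G$. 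Crucially, the paper proves step~(3) not by a direct Hewitt--Savage argument on orbits of $\aa$-fixing automorphisms, but by \emph{reducing to separate exchangeability}: Lemmas~\ref{lem: symmetric variables with respect to X} and~\ref{lem:separate symmetry is null} establish $\FF_\aa=\FF_\aa^{\sep}$, after which Hoover's classical conditional-independence result (Proposition~\ref{prop: Hoover conditional independence}) does all the work. This bypasses the delicate automorphism bookkeeping you anticipate.

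\textbf{The point to watch in your plan.} Your proposed proof of step~(3)---averaging over $\aa$-fixing automorphisms and invoking de~Finetti/Hewitt--Savage---will naturally produce a conditioning $\sigma$-field equal to $\FF_\aa$ (the invariant $\sigma$-field for that group), not the strictly smaller $\FF_\aa'=\sigma(S_\bb:\bb\in\srstr(\aa))$ that you need. Getting the conditioning down to $\FF_\aa'$ is precisely the content of the paper's reduction to Hoover via $\FF_\aa=\FF_\aa^{\sep}$, and it is not clear your orbit argument yields it without a comparable extra ingredient. Your step~(5) is also more delicate than your one-line justification suggests: the noise-outsourcing uniforms $U_\bb$ involve auxiliary randomness beyond $\sigma(S_\gamma)$, so ``$U_\bb$ lives in the $\sigma$-field against which $S_\aa$ was made conditionally independent'' is not literally true; a correct argument has to track the fresh uniforms and combine the conditional independence from step~(3) with the fact that $U_\aa\mid \bS_{\srstr(\aa)}$ is uniform to conclude $U_\aa\independent(\bS_{\srstr(\aa)},U_\bb:\bb\text{ previous})$. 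This can be made to work, but it is essentially a heterogeneous extension of Lemma~\ref{lem: coding lemma} and deserves to be stated and proved, not asserted.
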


Proposition \ref{prop 3} provides a representation for $\bS$ that is built out of a collection of independent random variables $(U_\aa : \aa \in I_G)$ and appropriate measurable functions. {The representation is given in terms of the inductively-defined random variables $(S'_\aa : \aa \in I_G)$, with induction being applied to the size of the domain of each multi-index in $I_G$. Two immediate consequences of the representation are that each $S_\aa$ depends only on $(U_\bb : \bb \in \rstr(\aa))$, and that its dependence on $(U_\bb : \bb \in \srstr(\aa))$ is always mediated via $(S_\beta : \beta \in \srstr(\aa))$.}


We now show that Proposition \ref{prop 3} implies Theorem \ref{Thm 1}. Note that using induction, we can convert $h_{\dom(\aa)}$ to a function $h'_{\dom(\aa)}$ for each $\aa$ such that
\begin{equation}\label{eqn: consequence 1 of representation of DAG} 
        (S_\aa : \aa \in I_G) \;\d=\; \Big(h'_{\dom(\aa)}\big(U_\bb : \bb\in \rstr(\aa)\big) : \aa \in I_G\Big).
\end{equation}
The key part of this inductive conversion is to set $h'_{\dom(\aa)}$ using the following equation:
\begin{multline*}
        h'_{\dom(\aa)}(U_\bb : \bb \in \rstr(\aa))
        = {} \\
        h_{\dom(\aa)}\Bigg(\Big(h'_{\dom(\bb)}\big(U_{\gamma} : \gamma \in \rstr(\bb)\big) : \bb \in \srstr(\aa)\Big),\, U_\aa \Bigg).
\end{multline*}
\begin{proof}[Proof of Theorem \ref{Thm 1}] $X_{D,\aa}$ is $\X$-measurable, and it is fixed under the action of every $\aa$-fixing $G$-automorphism. Thus, $X_{D,\aa}$ is $\FF_\aa$-measurable by the definition of the $\sigma$-field $\FF_\aa$. This means that $X_{D,\aa}$ is also $S_\aa$-measurable because $\sigma(S_\aa) = \FF_\aa$, 
Furthermore, $X_{D,\aa}$ takes values in a Borel space. Thus, there exists a measurable function $f_\aa$ such that $X_{D,\aa} = f_\aa(S_\aa)$ almost surely. 
By the DAG-exchangeability of the collection of ordered pairs $((\bX_C,\bS_C) : C \in \CC)$, we can pick $f_\aa$ such that it depends only on $\dom(\aa)$ and not on the value of $\aa$ itself. This means that we can write $X_{D,\aa} = f_{D}(S_\aa)$ almost surely, by writing $f_\alpha$ as $f_{\dom(\aa)}$. Plugging in \eqref{eqn: consequence 1 of representation of DAG} finishes the proof. 
\end{proof}

\begin{remark} 
        Before getting into the proof of Proposition~\ref{prop 3}, we recall a generic property of exchangeable structures. Whenever $\bX =(X_n : n \in \N)$ is a sequence, by Kolmogorov's extension theorem, its exchangeability is equivalent to the seemingly weaker condition that the distribution of $\bX$ is invariant under the action of \textit{finite} permutations (permutations fixing all but finitely many elements). In particular, if $\bX$ is exchangeable, then $\bX = (X_n : n \in \N) \d= (X_{\tau(n)} : n \in \N) = \tau(\bX)$ for any \textit{injection} $\tau$. (In fact, Ryll-Nardzewski's theorem tells us the converse is also true.) We can extend this sort of argument to other random variables associated to the symmetries of $\bX$.
	
        Let $Y$ be $\bX$-measurable and let $K$ be a subgroup of the infinite permutations. By definition, $Y$ is invariant under the action of $K$ if and only if $(Y, \bX) \d= (Y, \tau(\bX))$ for all $\tau \in K$. By the above paragraph, this is equivalent to having $(Y, X_n : n \in F) \d= (Y, X_{\tau(n)}: n \in F)$ for all $\tau \in K$ and all \emph{finite} $F \subseteq \N$. Moreover, we have  $(Y,\bX)\d=(Y,\rho(\bX))$ for any injection $\rho$ on $\N$ such that its arbitrary restriction to finite sets can be extended to an element in $K$. 
        

\end{remark}

\begin{table}[t]
	\begin{center}
		\caption{Notation Guide}
		\label{tab:table1}
		\begin{tabular}{l|r} 
			Symbol & Object \\
			\hline
			$G$& finite DAG or vertex set of a finite DAG \\
			$C$& downward-closed (w.r.t. partial ordering) subset of $G$ \\
			$W, H$& arbitrary subsets of $G$ \\
			$\AA_G$& set of all downward-closed subsets of $G$\\
			$\CC$& sequence of {distinct}, downward-closed subsets of $G$ (i.e. $\CC\subset\AA_G$)\\ 
			$\N^G, \N^C, \N^H$ & index sets corresponding to vertex sets of $G,C,H$\\
			$I_C$&  the multi-graph index set: $\bigcup_{D \in \AA_C}\,\N^D$\\ 
			$\alpha, \beta, \gamma$ & elements of some index set \\
			$\tau, \rho$ & $G$-automorphisms \\
		\end{tabular}
	\end{center}
\end{table}


\section{Proof of the Main Result}\label{sec:proof}
To prove our main result, it remains to prove Proposition~\ref{prop 3}. Let $G$, $\CC$ and $\X$ be a finite DAG, a sequence of distinct closed subsets of $G$, and a $\CC$-type DAG-exchangeable random array collection from the proposition. Also let $\bS$ be {the symmetry array} defined as in Section \ref{sec: symmetry rv S}. 

\vspace{3mm}

\noindent {\bf Overview of the proof of Proposition~\ref{prop 3}}
\vspace{3mm}

{As typical for probabilistic proofs in the exchangeability literature, our proof is by induction. Before describing an overview of our proof, it is pedagogical to introduce a natural alternative approach which is also an induction; the difficulty in realizing this alternative approach helps clarify what we believe to be the `crux' of proving a  representation theorem for DAG-exchangeable arrays, and is what eventually guides us in how to
   organize the actual proof.} The alternative approach is first to apply the inductive hypothesis to every $C \in \AA_G \setminus \{G\}$, i.e. assume a representation \eqref{equation: S and S'} for $(S_\alpha : \alpha \in I_C)$ for every such $C$, and then to prove the inductive step by combining these representations to get \eqref{equation: S and S'} for the entirety of $G$. Although this seems natural, this approach is difficult to implement. This is because different $C$ and $C'$ in $\AA_G \setminus \{G\}$ may still share vertices (i.e., $C \cap C' \neq \emptyset$) and the representations obtained by applying the inductive hypothesis to $C$ and $C'$ may differ on those vertices; these representations may induce different representations for $(S_\alpha : \alpha \in I_{C \cap C'})$. We will henceforth say that these two representations are  {\bf consistent} if they are the same for all $\alpha \in I_{C \cap C'}$

{The architecture of our induction is built to overcome the above-described difficulty, i.e., built to make representations corresponding to $C$ and $C'$ consistent on $C\cap C'$. In fact, there are two levels of induction in our proof. At the top level, there is a rather simple induction on the number of vertices $n$ of $G$.
	The top-level inductive assumption allows us to assume representations for all closed subsets with less than $n$ vertices. The more difficult second-level induction is designed to make consistent, in a systematic way, the possibly different representations for all the different closed subsets having less than $n$ vertices.
		The base step $k=0$ of our second-level induction, is to choose from any one of the closed subsets of size $n-1$, a representation for the array $(S_\alpha : \alpha \in I_{G_0})$ for the closed $G_0$ defined by
\begin{equation}
        \label{eqn:notation:G0}
        G_0\defeq G\setminus T.
\end{equation}
Here $T$ is the set of all {\it terminal} vertices (i.e. vertices with no descendants). Our second-level induction is an induction on the size of sets $A\subset T$ that we will now add to $G_0$. More specifically, the inductive step of our second-level induction is to show that, whenever consistent representations for arrays of the form $(S_\alpha : \alpha \in I_{G_0\cup A})$ exist for $A$'s such that $|A|=k-1$, and potentially non-consistent representations exist also for $|A|=k$, then one can appropriately combine the consistent representations at level $k-1$ to obtain consistent representations at level $k$.}


The second-level induction gives us consistent representations \eqref{equation: S and S'} for arrays of the form  $$(S_\alpha : \alpha \in I_{G_0 \cup A} \text{ for some } A \subseteq T, |A|=k)$$ for all $k$ such that $|G_0|+k<|G|$.
	To finish the proof, we must complete the top-level induction by extending the consistent representations at level $n-1$ (obtained via the second-level induction), which give a joint representation of the array $(S_\alpha : \alpha \in I_G \backslash \N^G)$,  to the whole array $(S_\aa : \alpha \in I_G)$. This final step is easily obtained by an application of an elementary coding lemma (Lemma \ref{lem: coding lemma}).

Now, for each $C \in \AA_G$, set 
\begin{equation*}
        \mathcal{F}_C\defeq \sigma(\bS_C).
\end{equation*}
Clearly, we have $\mathcal{F}_D \subseteq \mathcal{F}_C$ whenever $D \in \AA_C$. 
The key to carrying out the above described induction is the following proposition. We will prove this in Section \ref{subsection: proof}.

\begin{prop}\label{prop: conditional independence on disjoint graphs, 1} Let $C, C_1,\ldots,C_m$ be closed subsets of $G$. 
Then, $$\FF_C \underset{(\FF_{C \cap C_i})_{i \leq m}}{\independent} (\FF_{C_i})_{i\leq m}.$$
\end{prop}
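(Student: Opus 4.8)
The plan is to reduce the statement to a conditional-independence fact about exchangeable sequences, applied inside a nested sequence of $\sigma$-fields indexed by a chain of closed sets. First I would recall the general principle behind all such results (the one the authors already invoke in the remark preceding Theorem~\ref{Thm 1}): if $\mathbf Y=(Y_n:n\in\N)$ is exchangeable, then $Y_1$ (indeed any finite subcollection) is conditionally independent of the tail given the exchangeable $\sigma$-field, and more generally, disjoint finite blocks are conditionally independent given the shift-invariant / exchangeable $\sigma$-field. The key observation is that $\FF_{C\cap C_i}$ plays the role of the exchangeable $\sigma$-field relative to the ``extra coordinates'' that separate $C$ from $C_i$. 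Concretely, write $D_i\defeq C\cap C_i$, and note $\FF_{D_i}\subseteq\FF_C$ and $\FF_{D_i}\subseteq\FF_{C_i}$ by the monotonicity remark ($\FF_E\subseteq\FF_{E'}$ when $E\subseteq E'$ are closed). One reduces to a single $C_i$ first and then bootstraps to the finite family; the passage from one to many is the standard one for conditional independence, using that the $\FF_{C_i}$ together with the conditioning fields can be amalgamated.

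The heart of the argument, for a single pair $(C,C')$ with $D=C\cap C'$, is as follows. Pick a vertex $v$ that lies in $C$ but not in $C'$ (if no such vertex exists then $C\subseteq C'$, $\FF_C\subseteq\FF_{C'}$, and the statement is trivial; symmetrically if $C'\subseteq C$). For a fixed multi-index $\alpha$ on $D$, consider the $\N$-indexed family obtained by letting the $v$-coordinate (and, inductively, the coordinates in $C\setminus(D\cup\{v\})$) range over $\N$; by DAG-exchangeability of the augmented collection $((\bX_C,\bS_C):C\in\CC)$, this family is exchangeable in the $v$-coordinate because permuting the $v$-coordinate is realized by a $G$-automorphism (the permutations allowed at $v$ may depend on the parents of $v$, which all lie below $v$, hence in any closed set containing $v$; this is exactly the content of Example~\ref{ex:G-auto}(d)). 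The exchangeable $\sigma$-field of this family is, by definition of the $\FF_\alpha$'s and the way $\bS$ was constructed, precisely $\FF_D$ — the invariance under $\alpha$-fixing automorphisms that swap values of the free coordinates is what $\FF_\alpha$ encodes. Then the classical conditional-independence property of exchangeable sequences gives that the $\sigma$-field generated by coordinates on the $C$-side is conditionally independent of the $\sigma$-field generated by coordinates on the $C'$-side, given $\FF_D$. Iterating over the finitely many vertices in the symmetric difference, and organizing the iteration along a chain of closed sets interpolating between $D$ and $C$ (each step adding one terminal-in-the-current-set vertex, so that closedness is preserved and the added coordinate is genuinely exchangeable), yields $\FF_C\ \underset{\FF_D}{\independent}\ \FF_{C'}$.

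To obtain the stated multi-conditioning version, I would use the standard fact that $\FF_C\ \underset{(\FF_{D_i})_{i\le m}}{\independent}\ (\FF_{C_i})_{i\le m}$ follows from the pairwise-type statements together with a chain-rule / tower argument: condition first on $\bigvee_i\FF_{D_i}$, and peel off the $C_i$ one at a time, at each stage using that $\FF_{C_i}$ contributes no information about $\FF_C$ beyond $\FF_{D_i}$, and that the already-revealed fields $\FF_{C_1},\dots,\FF_{C_{i-1}}$ are, relative to $\FF_C$, subsumed by the conditioning fields $\FF_{D_1},\dots,\FF_{D_{i-1}}$. The measure-theoretic bookkeeping here is routine given the Borel-space and countable-generation hypotheses already assumed for the $\FF_\alpha$'s, so I would treat it briefly.

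The main obstacle I anticipate is the first, structural step: making precise that ``the exchangeable $\sigma$-field of the sequence obtained by varying the free coordinates equals $\FF_D$.'' This requires care because (i) one must vary a whole block of coordinates $C\setminus D$, not a single one, so the relevant symmetry group is not a single copy of $S_\N$ but the full group of $G$-automorphisms fixing the $D$-part of the index — and one must check that the exchangeable (Hewitt–Savage) $\sigma$-field of this group action is generated exactly by the $\FF_\alpha$ with $\alpha$ a $D$-multi-index, no more and no less; and (ii) one must ensure the chain of closed sets used in the iteration can always be chosen so that the newly-freed coordinate corresponds to a vertex that is terminal within the current closed set, which is what guarantees the permutations at that vertex form a genuine (parent-indexed) exchangeable family compatible with a $G$-automorphism. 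Both points are essentially the ultrahomogeneity / extendability property flagged in the remark before Table~\ref{tab:table1}, so I expect the proof to lean on that; establishing it cleanly in the DAG setting (as opposed to the tree setting of Austin–Panchenko) is where the real work lies.
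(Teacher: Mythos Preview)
Your route is quite different from the paper's, and the obstacle you flag is a genuine gap, not a routine verification.

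The paper does not use one-dimensional de Finetti or any vertex-by-vertex iteration. Instead it invokes Hoover's conditional-independence result for \emph{separately} exchangeable arrays (Proposition~\ref{prop: Hoover conditional independence}, which is Corollary~5.6 of \cite{hoover1979relations}) as a black box. The bridge is Lemmas~\ref{lem: symmetric variables with respect to X} and~\ref{lem:separate symmetry is null}, which show via a tail/shift argument that $\FF_\alpha^{\sep}=\FF_\alpha$ for every $\alpha\in I_G$ and $\FF_\alpha^{\sep}=\FF_{\alpha|_{\dom(\alpha)^o}}$ in general. Writing $\FF_C=\sigma(\FF_\alpha^{\sep}:\alpha\in I_C)$ and likewise for the $\FF_{C_i}$, Hoover's theorem immediately yields conditional independence over $\GG'=\sigma\big(\FF_\alpha^{\sep}:\dom(\alpha)\subseteq C\cap C_i\text{ for some }i\big)$, and the two lemmas identify $\GG'$ with $\sigma(\FF_{C\cap C_i}:i\le m)$. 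The multi-$C_i$ statement comes out in a single stroke; there is no peeling.

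Your assertion that ``the exchangeable $\sigma$-field of the family obtained by varying the free coordinates is precisely $\FF_D$'' is the crux, and it does not follow from the definition of the $\FF_\alpha$'s. Freeing a single terminal vertex $v$ of $C$ produces a one-parameter exchangeable family whose invariant $\sigma$-field consists of events fixed by permutations of the $v$-coordinate \emph{only}; this is a priori strictly larger than $\FF_{C\setminus\{v\}}$ (which demands invariance under \emph{all} $(C\setminus\{v\})$-fixing $G$-automorphisms, including those acting on coordinates outside $C$), and is nowhere near $\FF_D$ when $|C\setminus D|>1$. Showing that iterating along your chain of closed sets really collapses the invariant $\sigma$-field down to $\FF_D$ is precisely the content of Lemmas~\ref{lem: symmetric variables with respect to X}--\ref{lem:separate symmetry is null}, and then extracting the full multidimensional conditional independence from one-coordinate exchangeability would essentially re-prove Hoover's theorem in this setting. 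So your plan does not bypass the paper's technical ingredients; it reinvents them while leaving the hardest step unfilled.
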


{The proposition will be utilized via the following two immediate corollaries.} Let the set of all terminal vertices of $G$ be denoted by 
\begin{equation}
        \label{eqn:notation:T-t}
        T=\{v_1,\ldots,v_t\},\ \ \text{ where }|T|=t.
\end{equation}
Define
\begin{equation}
        \label{eqn:notation:G0-GGA}
        G_0 \defeq G \backslash T
        \quad\text{ and }\quad
        \GG_A \defeq \FF_{G_0 \cup A}
        \ \text{for $A \subseteq T$}.
\end{equation}
\begin{cor}\label{cor: conditional independence on disjoint graphs, 2} Let $B, B_1,\ldots,B_m \subseteq T$. Then, $$\GG_B \underset{(\GG_{B \cap B_i})_{i \leq m}}{\independent} (\GG_{B_i})_{i\leq m}.$$
\end{cor}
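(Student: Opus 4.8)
The plan is to obtain Corollary~\ref{cor: conditional independence on disjoint graphs, 2} as a direct specialization of Proposition~\ref{prop: conditional independence on disjoint graphs, 1}. I would apply that proposition with the closed sets $C \defeq G_0 \cup B$ and $C_i \defeq G_0 \cup B_i$ for $i \leq m$; once these are seen to be downward-closed and the relevant pairwise intersections are identified, the conclusion of the proposition becomes, after unwinding the abbreviation $\GG_A = \FF_{G_0 \cup A}$, precisely the statement to be proved.

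First I would check that $G_0 = G \setminus T$ is downward-closed: if $w \in G_0$ and $w' \preceq w$, then either $w' = w \in G_0$, or $w' \prec w$, in which case $w$ is a proper descendant of $w'$, so $w'$ is not terminal and hence $w' \in G_0$. The same reasoning shows that $G_0 \cup A$ is downward-closed for every $A \subseteq T$: a predecessor of a vertex of $G_0$ already lies in $G_0$, while a proper predecessor of a vertex $a \in A \subseteq T$ is non-terminal and hence lies in $G_0$, and $a$ itself lies in $A$. Thus $C = G_0 \cup B$ and each $C_i = G_0 \cup B_i$ are elements of $\AA_G$, so Proposition~\ref{prop: conditional independence on disjoint graphs, 1} is applicable to them.

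Next I would compute the pairwise intersections. Since $G_0 = G \setminus T$ is disjoint from $T$ and $B, B_i \subseteq T$, distributing the intersection gives $C \cap C_i = (G_0 \cup B) \cap (G_0 \cup B_i) = G_0 \cup (B \cap B_i)$. Consequently $\FF_C = \FF_{G_0 \cup B} = \GG_B$, $\FF_{C_i} = \FF_{G_0 \cup B_i} = \GG_{B_i}$, and $\FF_{C \cap C_i} = \FF_{G_0 \cup (B \cap B_i)} = \GG_{B \cap B_i}$. Substituting these identities into the conclusion $\FF_C \underset{(\FF_{C \cap C_i})_{i \leq m}}{\independent} (\FF_{C_i})_{i \leq m}$ of Proposition~\ref{prop: conditional independence on disjoint graphs, 1} yields exactly $\GG_B \underset{(\GG_{B \cap B_i})_{i \leq m}}{\independent} (\GG_{B_i})_{i \leq m}$, which is the corollary.

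I do not expect any genuine obstacle in the corollary itself: the entire substance is carried by Proposition~\ref{prop: conditional independence on disjoint graphs, 1}, whose proof is postponed to Section~\ref{subsection: proof}. The only point requiring care here is the elementary bookkeeping that $G_0$ together with any subset of the terminal vertices remains downward-closed, which is what makes the proposition legitimately invokable with these arguments.
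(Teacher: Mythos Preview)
Your proposal is correct and follows exactly the paper's approach: apply Proposition~\ref{prop: conditional independence on disjoint graphs, 1} with $C = G_0 \cup B$ and $C_i = G_0 \cup B_i$. You have simply spelled out the routine verifications (downward-closedness of $G_0 \cup A$ and the identity $C \cap C_i = G_0 \cup (B \cap B_i)$) that the paper leaves implicit.
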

\begin{proof} 
        Apply Proposition~\ref{prop: conditional independence on disjoint graphs, 1}
        with $C = G_0 \cup B$ and $C_i=G_0 \cup B_i$. 
\end{proof}

\begin{cor}\label{prop:inductive joining of distributional equalities} For $k=0,\ldots,t$, let $$\HH_k \defeq \{\GG_A : A\subseteq T, |A|=k\}\quad\text{ and }\quad\GG_k \defeq \sigma(\HH_k).$$ Then, given $\GG_{k-1}$, the set $\HH_k$ is a {family of  independent $\sigma$-fields} for all $1 \leq k \leq t$.
\end{cor}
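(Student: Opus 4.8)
The plan is to deduce the conditional independence of $\HH_k=\{\GG_A:A\subseteq T,\ |A|=k\}$ given $\GG_{k-1}$ from one application of Corollary~\ref{cor: conditional independence on disjoint graphs, 2} per member of $\HH_k$, stitched together with two elementary properties of conditional independence. I would first recall the standard fact that a finite family $(\F_i)_{i\in I}$ of $\sigma$-fields is conditionally independent given $\G$ provided $\F_i\,\underset{\G}{\independent}\,\bigvee_{j\neq i}\F_j$ holds for each $i$ (one implication is immediate; for the other, induct on $|I|$, using that independence of $\F_i$ from a larger $\sigma$-field implies independence from a smaller one). So it suffices to fix $A_0\subseteq T$ with $|A_0|=k$ and prove
$$
\GG_{A_0}\ \underset{\GG_{k-1}}{\independent}\ \bigvee_{A\neq A_0,\ |A|=k}\GG_A .
$$

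To obtain this, I would apply Corollary~\ref{cor: conditional independence on disjoint graphs, 2} with $B=A_0$ and with $(B_i)_i$ the finite list comprising all $A\subseteq T$ with $|A|=k$ and $A\neq A_0$, together with all $A''\subseteq T$ with $|A''|=k-1$. Since $\GG_{k-1}=\sigma(\HH_{k-1})$ is generated by the $\GG_{A''}$ with $|A''|=k-1$, the join $\bigvee_i\GG_{B_i}$ equals $\bigl(\bigvee_{A\neq A_0,\,|A|=k}\GG_A\bigr)\vee\GG_{k-1}$, so the corollary yields
$$
\GG_{A_0}\ \underset{\DD}{\independent}\ \Bigl(\bigvee_{A\neq A_0,\ |A|=k}\GG_A\Bigr)\vee\GG_{k-1},
\qquad \DD:=\sigma\bigl(\GG_{A_0\cap B_i}:i\bigr).
$$
The reason for padding the list with the size-$(k-1)$ sets is exactly to make $\GG_{k-1}$ appear on the right at no cost.

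The crux is then the containment $\DD\subseteq\GG_{k-1}$. Each $A_0\cap B_i$ is a proper subset of $A_0$ (it is $A_0\cap A$ with $|A|=k$, $A\neq A_0$, or $A_0\cap A''$ with $|A''|=k-1$, and in either case it cannot equal $A_0$ since $|A_0|=k$), hence has size at most $k-1$, so it may be enlarged within $A_0$ to a set $A'''$ with $|A'''|=k-1$. Since $G_0=G\setminus T$ is downward-closed and adjoining terminal vertices preserves downward-closedness, both $G_0\cup(A_0\cap B_i)$ and $G_0\cup A'''$ lie in $\AA_G$, with the former contained in the latter; the monotonicity $\FF_D\subseteq\FF_C$ for $D\in\AA_C$ then gives $\GG_{A_0\cap B_i}=\FF_{G_0\cup(A_0\cap B_i)}\subseteq\FF_{G_0\cup A'''}=\GG_{A'''}\subseteq\GG_{k-1}$. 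Hence $\DD\subseteq\GG_{k-1}$, so $\DD\vee\GG_{k-1}=\GG_{k-1}$. Writing $\KK:=\bigvee_{A\neq A_0,\,|A|=k}\GG_A$, the second display reads $\GG_{A_0}\,\underset{\DD}{\independent}\,(\KK\vee\GG_{k-1})$, so by the weak-union property of conditional independence ($\mathcal A\,\underset{\mathcal C}{\independent}\,(\mathcal B_1\vee\mathcal B_2)$ implies $\mathcal A\,\underset{\mathcal C\vee\mathcal B_2}{\independent}\,\mathcal B_1$) we get $\GG_{A_0}\,\underset{\DD\vee\GG_{k-1}}{\independent}\,\KK$, i.e.\ $\GG_{A_0}\,\underset{\GG_{k-1}}{\independent}\,\KK$. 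As $A_0$ was arbitrary among size-$k$ subsets of $T$, the characterization recalled at the outset completes the proof.

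I expect the only genuine subtlety to be the containment $\DD\subseteq\GG_{k-1}$ — it hinges on $|A_0|=k$, which forces every intersection $A_0\cap B_i$ to sit strictly inside $A_0$ and thus to be paddable up to size $k-1$ — together with invoking the weak-union property in its correct form; everything else is a routine repackaging of Corollary~\ref{cor: conditional independence on disjoint graphs, 2}.
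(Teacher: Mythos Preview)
Your argument is correct and follows essentially the same route as the paper: fix $A_0$, apply Corollary~\ref{cor: conditional independence on disjoint graphs, 2}, and then upgrade the conditioning $\sigma$-field to $\GG_{k-1}$. The only cosmetic difference is that the paper applies the corollary with just the size-$k$ sets $B\neq A_0$ and then uses the sandwich $\GG'\subseteq\GG_{k-1}\subseteq\sigma(\GG'\cup\GG_{k\setminus A_0})$ to move the conditioning, whereas you pad the list of $B_i$'s with the size-$(k-1)$ sets so that $\GG_{k-1}$ already appears on the right and then invoke weak union; both maneuvers are standard and equivalent here.
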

\begin{proof} When $k=t$, the result is immediate because $\HH_k$ is a singleton set. Assume that $k < t$. Let $A \subseteq T$ be such that $|A|=k$. Set 
$$
        \GG_{k\setminus A}\defeq\sigma(\{\GG_B : B \subseteq T, |B|=k, B \neq A\})
$$ 
and 
$$
        \GG'\defeq\sigma(\{\GG_{B \cap A}: B \subseteq T, |B|=k, B\neq A\}).
$$
By Corollary \ref{cor: conditional independence on disjoint graphs, 2}, we have $\GG_A \underset{\GG'}{\independent} \GG_{k\setminus A}$.
But 
$$
        \GG' \subseteq \GG_{k-1} \subseteq \sigma(\GG'\cup \GG_{k\setminus A}).
$$
This is because $\GG_{B \cap A} \subseteq \GG_B$ for every $B \subseteq T$ (which itself follows from the fact that $\FF_D \subseteq \FF_C$ for closed $D,C$ whenever $D \subseteq C$). Thus,
we have that $\GG_A \underset{\GG_{k-1}}{\independent} \GG_{k\setminus A}$, from which the result follows.
\end{proof}

For any subset $I$ of $I_G$ and random array $\bY = (Y_\aa : \aa \in I_G)$, let us denote the sub-array $$\bY_I\defeq(Y_\aa : \aa \in I).$$

\begin{proof}[Proof of Proposition \ref{prop 3}] 
	Without loss of generality we will assume that $\CC$ is the set $\AA_G$ with some fixed ordering.
        We use two levels of induction in the proof. The top-level induction is on the number of vertices of $G$ where the $n=1$ case is simply the de Finetti-Hewitt-Savage theorem. Using the inductive hypothesis for the $n-1$ case, assume representations exist for 
        \be\label{firstinduction}
        (S_\alpha : \alpha \in I_{G_0 \cup A} \text{ for some } A \subseteq T, |A|=k)
        \ee
        whenever $|G_0|+k<|G|$. Our first objective is to show that such representations can be chosen to be consistent in the sense described at the beginning of this section.
        	
        	Set $|T|=t$. The case where $|T|=1$ is obtained directly from the inductive hypothesis and Lemma~\ref{lem: final piece}, below. In the rest of the proof, we assume that $|T| = t >1$.  

Choose a closed subset of $G$ with $n-1$ vertices. By the fact that it is closed, it must contain $G_0$. By the assumption of \eqref{firstinduction}, there exist Borel functions $\{g_C: C \in \AA_{G_0}\}$ as well as an array of independent $[0,1]$-uniform random variables, $\bU$ (which we can assume to be independent from all of the symmetry arrays below), such that
\begin{align}\label{equation: representation on 0} 
        (S_\aa : \aa \in I_{G_0}) 
        & \d= (S'_\aa : \aa \in I_{G_0}), \nonumber 
        \\ 
        S'_\aa 
        & \defeq g_{\dom(\aa)}(\bS'_{\srstr(\aa)}, U_\aa )\qquad \text{for } \aa \in I_{G_0}.
\end{align}
Similarly, for each $A\subsetneq T$ there exist  $\{g^A_C: C \in \AA_{G_0 \cup A}\}$ such that
\begin{align}\label{equation: representation on 1 marginal} 
        (S_\aa : \aa \in I_{G_0 \cup A}) 
        & \d= (S^A_\aa : \aa \in I_{G_0 \cup A}), \nonumber 
        \\ 
        S^A_\aa 
        & \defeq g^A_{\dom(\aa)}(\bS^A_{\srstr(\aa)}, U_\aa ) \qquad\text{for } \aa \in I_{G_0 \cup A}.        
\end{align}
Here, the arrays $\bS'_{\srstr(\aa)}$ and $\bS^A_{\srstr(\aa)}$ are defined recursively through \eqref{equation: representation on 0} and \eqref{equation: representation on 1 marginal}. We must next show that the above representations can be chosen to be consistent, to which end we use another (second-level) induction on the sizes of the $A$'s, say $|A|=k$.

It is pedagogical to go through the easiest step of induction, from $k=0$ to $1$, before dealing with the general inductive step. To simplify notation, let $A=\{v_s\}$ and set
        \begin{align}\label{eq: v_s}
        S^s_\aa 
 \defeq g^s_{\dom(\aa)}(\bS^s_{\srstr(\aa)}, U_\aa ) \qquad\text{for } \aa \in I_{G_0 \cup \{v_s\}}
 \end{align}
and
\begin{align*}
        \bm\eta_s & \defeq\bS_{I_{G_0 \cup \{v_s\}}},
        &
        \bm\theta_s & \defeq\bS^s_{I_{G_0 \cup \{v_s\}}},
        &
        \bU_s & \defeq\bU_{I_{{G_0} \cup \{v_s\}} \setminus I_{G_0}}.
\end{align*}
Using the second equation of \eqref{eq: v_s} for each $\alpha \in I_{G_0 \cup \{v_s\}}$, we can express each $S^s_\alpha$  in terms of the $S^s_\beta$'s 
and $U_\gamma$'s with $\beta \in I_{G_0}$ and $\gamma \in I_{G_0 \cup \{v_s\}}\setminus I_{G_0}$. The resulting equations can be written as $$\bm\theta_s=F_s(\bS^s_{I_{G_0}}, \bU_s)$$ for an appropriate measurable $F_s$. By Corollary \ref{prop:inductive joining of distributional equalities}, $\underset{\bS_{I_{G_0}}}{\independent} (\bm\eta_s)_s $. By construction, \mbox{$ \bS_{I_{G_0}} \d= \bS'_{I_{G_0}} \d= \bS^s_{I_{G_0}}$} are independent from $(\bU_s)_s$, and $( \bS_{I_{G_0}},\bm\eta_s) \d= (\bS^s_{I_{G_0}},\bm\theta_s)$. Therefore, by Lemma \ref{lem:joining}, we have $$( \bS_{I_{G_0}}, \bm\eta_s)_s \d= (\bS'_{I_{G_0}},F_s(\bS'_{I_{G_0}},\bU_s))_s.$$
Thus we can join the representations given by \eqref{equation: representation on 0} and \eqref{equation: representation on 1 marginal} to obtain the following joint distributional equality
\begin{align}\label{equation: representation on 1 joint}
        (S_\aa : \aa \in I_{G_0 \cup \{v_s\}}, s \leq t) & \d= (S^1_\aa : \aa \in I_{G_0 \cup \{v_s\}}, s \leq t), 
\end{align}
where $S^1_\aa =S'_\aa $ if $\aa \in I_{G_0}$ and the rest of the $S^1_\aa $'s are defined by the recursive formulae $$S^1_\aa =g^s_{\dom(\aa)}(\bS^1_{\srstr(\aa)}, U_\aa ).$$
This is a (consistent) joint representation of $$(S_\aa : \aa \in I_{G_0 \cup A}, A \subseteq T, |A|=k),$$ in the case where $k=1$. 

Let us now generalize the above by carrying out the second-level inductive step on general $k$ to achieve an analogous joint representation at the level $k=t-1$. Now set $k<t-1$ to be fixed and assume that we have the following joint representation:
\begin{align}\label{equation: representation on k joint}\nn
        (S_\aa :\aa \in I_{G_0 \cup A}, A \subseteq T, |A|=k) 
        & \d= (S^k_\aa : \aa \in I_{G_0 \cup A}, A \subseteq T, |A|=k)
        \\
        S^k_\aa 
        & \defeq g_{\dom(\aa)}(\bS^k_{\srstr(\aa)}, U_\aa )\qquad\text{for } \aa \in \underset{|A|=k}{\bigcup}I_{G_0 \cup A}.
\end{align}


 Consider the representation in \eqref{equation: representation on 1 marginal} for any fixed $B \subseteq T$ with $|B|=k+1$ (note that $k+1\le t-1$), and with Borel measurable functions $(g^B_C:C \in \AA_{G_0 \cup B})$. We rewrite it here for convenience: 
\begin{align}\label{equation: representation on k+1 marginal}
        (S_\aa : \aa \in I_{G_0 \cup B}) 
        & \d= (S^B_\aa : \aa \in I_{G_0 \cup B}), \nonumber 
        \\
        S^B_\aa 
        & \defeq g^B_{\dom(\aa)}(\bS^B_{\srstr(\aa)}, U_\aa )\qquad\text{for }\aa \in I_{G_0 \cup B}.
\end{align}

Define the following arrays (to ease notation we do not use boldface for these):
\begin{align*}
        \eta_B   & \defeq (S_\aa : \aa \in I_{G_0 \cup A}, A \subseteq B, |A|=k),
        \\ 
        \theta_B' & \defeq (S^k_\aa : \aa \in I_{G_0 \cup A}, A \subseteq B, |A|=k),
        &\theta_B^B& \defeq (S^B_\aa : \aa \in I_{G_0 \cup A}, A \subseteq B, |A|=k),
        \\
        \\
        U_B & \defeq(U_\aa : \aa \in I_{G_0 \cup A}, A \subseteq B, |A|=k),\\
        U_{B^c} & \defeq (U_\aa : \aa \in I_{G_0 \cup A}, A \subseteq T, |A|=k) \setminus U_B,
        &\partial U_B & \defeq (U_\aa :\aa \in I_{G_0 \cup B}) \setminus U_B,
        \\
        \\
        \eta_T & \defeq (S_\aa :\aa \in I_{G_0 \cup A}, A \subseteq T, |A|=k),
        & 
        \bar\eta_B & \defeq (S_\aa :\aa \in I_{G_0 \cup B}),
        \\
        \theta_T' & \defeq (S^k_\aa : \aa \in I_{G_0 \cup A}, A \subseteq T, |A|=k),
        & 
        \bar\theta^B_B & \defeq(S^B_\aa : \aa \in I_{G_0 \cup B}).
\end{align*}

\noindent where $\setminus U_B$ denotes deletion of the array $U_B$. Then, by Corollary \ref{cor: conditional independence on disjoint graphs, 2}, $\eta_T$ and $\bar\eta_B$ are conditionally independent given $\eta_B  $, and by construction $\eta_B   \d= \theta_B' \d= \theta_B^B$, all of them independent from $U_{B^c}$ and $\partial U_B$. Also, by the first lines of \eqref{equation: representation on k joint} and \eqref{equation: representation on k+1 marginal}, we have $\eta_T\ed \theta_T'$ and $\bar\eta_B\ed \bar\theta^B_B$. Finally, using the second line of \eqref{equation: representation on k+1 marginal} for each $\aa \in I_{G_0 \cup B}$, we can express
$S^B_\aa$ in terms of the $S^B_\bb$'s in $\theta_B^B$ and $U_\alpha$ in $\partial U_B$. Thus, for an appropriate $F_B$, we have 
$$
\bar\theta^B_B=F_B(\theta_B^B, \partial U_B).
$$
Also, by similar reasoning using the second equation in \eqref{equation: representation on k joint}, we get
$$
\theta_T'=F(\theta_B', U_{B^c})
$$
for some $F$.

Now,
by Lemma \ref{lem:joining}, 
$$(\eta_B  ,\eta_T,\bar\eta_B) \d= (\theta_B', F(\theta_B',U_{B^c}), F_B(\theta_B',\partial U_B))$$
and in particular $$(\eta_T, \bar\eta_B) \d= (F(\theta_B',U_{B^c}), F_B(\theta_B',\partial U_B))=(\theta_T', F_B(\theta_B',\partial U_B)).$$
Moreover, we have $\underset{\eta_T}{\independent}(\bar\eta_B)_B$ by Corollary \ref{prop:inductive joining of distributional equalities}, and also $\underset{\theta_T'}{\independent}(F_B(\theta_B', \partial U_B))_B$ since $F_B(\theta_B',\partial U_B)$ is a function of $(\theta_T', \partial U_B \setminus U_{B^c})$, while $(\partial U_B \setminus U_{B^c})_B$ is an independent family which is also independent from $\theta_T'$. Thus, a slight variation of Lemma \ref{lem:Identification} shows that $$(\eta_T, \bar\eta_B)_B \d= (\theta_T', F_B(\theta_B', \partial U_B))_B.$$ Therefore, we have
\begin{align}\label{equation: representation on k+1 joint}
        (S_\aa :{\aa \in I_{G_0 \cup B}}, B \subseteq T, |B|=k+1) & \d= ({S_\aa ^{k+1}}: \aa \in I_{G_0 \cup B}, B \subseteq T, |B|=k+1)
\end{align}
where ${S_\aa ^{k+1}}=S^k_\aa $ for $\aa \in I_{G_0 \cup A}, A \subseteq T, |A|=k$ and for other $\aa \in \indices {G_0 \cup B}$, the random variable ${S_\aa ^{k+1}}$ is defined through the recursive formulae $${S_\aa ^{k+1}}=g^B_{\dom(\aa)}({\bS^{k+1}_{\srstr(\aa)}}, U_\aa ).$$ Thus we have built a $(k+1)$-version of \eqref{equation: representation on k joint}. By inducting up to the level $k=t-1$ (our top-level inductive hypothesis at level $n-1$ only allows us to go this far), we obtain the following representation, which involves everything except for the $S_\aa$'s for $\aa  \in \N^G$.
\begin{align}\label{equation: representation on t-1}
        (S_\aa :{\aa \in I_G\setminus \N^G}) 
        & \d= (S'_\aa : {\aa \in I_G\setminus \N^G}), \nonumber 
        \\
        S'_\aa & \defeq g_{\dom(\aa)}(\bS'_{\srstr(\aa)}, U_\aa )
        \qquad\text{for } \aa \in { I_G\setminus \N^G}.
\end{align}

We have joined all the representations on the proper sub-DAGs. Lemma \ref{lem: final piece}, below, is the final piece of the puzzle to complete the top-level induction to get a representation for the whole array $(S_\aa : \aa \in I_G)$. Using this lemma (defining $\t\bS$ as in the lemma, and defining $\t\bS'$ similarly), we can complete the proof by showing
$$
\Big(\t\bS, \big(f_G(\bS_{\srstr(\aa)}, U_\aa):\aa \in \N^G\big)\Big)
\d= 
\Big(\t\bS', \big(f_G(\bS'_{\srstr(\aa)}, U_\aa):\aa \in \N^G\big)\Big).
$$
This distributional equality holds because
$(\t\bS,(U_\aa:\aa\in \N^G)) \d= (\t\bS',(U_\aa:\aa\in \N^G))$, which itself 
follows from Lemma \ref{lem:Identification} instantiated with 
the following data:
\begin{align*}
        \FF & = \text{ the trivial $\sigma$-field},
        &
        I & =\{1,2\}, 
        \\
        (T_1,T_2) & = (\t\bS,(U_\aa:\aa\in \N^G)), 
        &
        (V_1,V_2) & =(\t\bS',(U_\aa:\aa\in \N^G)).
\end{align*}
\end{proof}
\begin{lem}\label{lem: final piece} Let $$\t\bS\defeq(S_\aa :\aa \in I_G \setminus \N^G).$$ Then, there exists a Borel measurable function $f_G$ such that for any array of independent $[0,1]$-uniform random variables $(U_\aa:  \aa\in \N^G)$ which is independent from $\bS$,
	\begin{equation}\label{equation: final joint}
	\Big(\t\bS, (S_\aa:\aa \in \N^G)\Big) \d= \Big(\t\bS, \big(f_G(\bS_{\srstr(\aa)}, U_\aa):\aa \in \N^G\big)\Big).
	\end{equation}
\end{lem}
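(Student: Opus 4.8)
The plan is to read Lemma~\ref{lem: final piece} as the topmost ``peeling'' step of the induction: it says that, conditionally on the array $\t\bS$ of symmetry variables indexed by proper restrictions, the full-index variables $(S_\aa:\aa\in\N^G)$ can be regenerated from fresh uniforms in the prescribed form. I would first reduce the lemma to two facts about the conditional law of $(S_\aa:\aa\in\N^G)$ given $\sigma(\t\bS)$: (i) the $(S_\aa:\aa\in\N^G)$ are conditionally independent given $\sigma(\t\bS)$; and (ii) there is one Borel kernel $\kappa$ with $\mathcal{L}\big(S_\aa\mid\sigma(\t\bS)\big)=\kappa\big(\bS_{\srstr(\aa)},\cdot\big)$ for every $\aa\in\N^G$ --- note that $\bS_{\srstr(\aa)}$ is a sub-array of $\t\bS$, since every strict restriction of a full index has a proper closed domain, so the right-hand side is $\sigma(\t\bS)$-measurable. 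Granting (i) and (ii), applying the elementary coding lemma (Lemma~\ref{lem: coding lemma}) to $\kappa$ produces a Borel function $f_G$ with $f_G(\mathbf{y},U)$ distributed as $\kappa(\mathbf{y},\cdot)$ for each fixed $\mathbf{y}$ and $U$ uniform; then, for i.i.d.\ uniforms $(U_\aa:\aa\in\N^G)$ independent of $\bS$, the array $\big(f_G(\bS_{\srstr(\aa)},U_\aa):\aa\in\N^G\big)$ has, conditionally on $\t\bS$, exactly the product law $\bigotimes_\aa\kappa(\bS_{\srstr(\aa)},\cdot)$, which by (i)--(ii) and Kolmogorov's extension theorem is the conditional law of $(S_\aa:\aa\in\N^G)$ given $\t\bS$; since $\t\bS$ occurs unchanged on both sides, \eqref{equation: final joint} follows. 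The $\aa$-independence of $\kappa$ in (ii) is its soft half: given full indices $\aa,\aa'$ one builds a $G$-automorphism $\tau$ with $\tau(\aa)=\aa'$ vertex by vertex, choosing for each vertex a permutation carrying $\aa$'s value to $\aa'$'s value on the relevant parent-fibre and the identity elsewhere (condition \eqref{eqn: automorphism} is then immediate); $\tau$ maps $\srstr(\aa)$ onto $\srstr(\aa')$ domain-by-domain, so by the DAG-exchangeability of $((\bX_C,\bS_C):C\in\CC)$ the joint law of $(\bS_{\srstr(\aa)},S_\aa)$ equals that of $(\bS_{\srstr(\aa')},S_{\aa'})$ and the two conditional kernels coincide.

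The substance is (i) together with the part of (ii) asserting that $\mathcal{L}(S_\aa\mid\sigma(\t\bS))$ depends on $\t\bS$ only through $\bS_{\srstr(\aa)}$, and I would obtain these by a peeling argument over the terminal vertices $T$ of $G$, in the style of the proofs of the Aldous--Hoover and Austin--Panchenko theorems, peeling the terminal vertices one at a time. To peel $v\in T$, one observes that the array $(S_\aa)$ is conditionally exchangeable along the $v$-coordinate within each bundle of fibres on which $\aa$ is frozen over the strict ancestors of $v$ --- this bundling being precisely the constraint that \eqref{eqn: automorphism} imposes on the permutation of $v$ --- so a conditional de~Finetti--Hewitt--Savage argument furnishes, for each such bundle, a directing random measure, and the coding lemma rewrites the corresponding entries. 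The technical crux, exactly as in the classical arguments, is to show that each of these directing measures is measurable with respect to the symmetry variable attached to the ancestors of the vertex being peeled --- a proper restriction of $\aa$, hence a coordinate of $\t\bS$. Iterating through all of $T$ expresses each $S_\aa$, $\aa\in\N^G$, as a Borel function of $\bS_{\srstr(\aa)}$ and of finitely many independent uniforms, which can be encoded as a single $U_\aa$; since these uniforms are i.i.d.\ and independent of $\sigma(\t\bS)$, one gets simultaneously the product structure (i) and the $\bS_{\srstr(\aa)}$-dependence in (ii), and the common kernel follows as in the previous paragraph.

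I expect the directing-measure measurability at each peeling stage to be the main obstacle --- the familiar hard point of the Aldous--Hoover theorem transplanted to the DAG setting: one must certify that the de~Finetti measure of each fibre-bundle, which a priori only records a conditional exchangeability, is already encoded in the symmetry $\sigma$-fields $\F_\bb$ attached to the relevant proper restrictions $\bb$ of $\aa$. This is where the construction of $\bS$ must be used in full --- both the property $\sigma(S_\aa)=\F_\aa$ and the invariance (``$\bS$-exchangeability'') property --- together with the ultrahomogeneity of the $G$-automorphism action on $\N^G$ and the resulting invariance under $G$-homomorphic injections recalled in the remark before Table~\ref{tab:table1}; these enter because when $|T|\ge 2$ one cannot vary a single terminal coordinate without disturbing some restrictions and hence part of $\t\bS$. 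A secondary but genuine nuisance is bookkeeping: the per-terminal-vertex codings must be composed into one $f_G$ in a manner insensitive to the order in which the vertices of $T$ are peeled and uniform in $\aa$, which is dispatched by the exchangeability argument of the first paragraph and by standard measurable selection of regular conditional distributions.
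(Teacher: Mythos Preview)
Your high-level reduction --- establish the conditional independence and identical-kernel facts (i)--(ii), then invoke the coding lemma with $\xi_\aa=S_\aa$, $\eta_\aa=\bS_{\srstr(\aa)}$ --- is exactly the skeleton of the paper's proof. The divergence is in how (i) and the $\bS_{\srstr(\aa)}$-measurability half of (ii) are obtained.

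The paper does \emph{not} peel terminal vertices or run any de~Finetti argument here. Instead it proves, in one stroke, the stronger statement
\[
S_\aa \underset{\bS_{\srstr(\aa)}}{\independent} (S_\bb : \bb\in I_G,\ \bb\neq \aa)
\]
by appealing to Hoover's conditional-independence result (Proposition~\ref{prop: Hoover conditional independence}) for \emph{separately} exchangeable arrays, transported to the DAG setting via the key identity $\FF_\aa^{\sep}=\FF_\aa$ (Lemma~\ref{lem: symmetric variables with respect to X}) and the reduction $\FF_\gamma^{\sep}=\FF_{\gamma^o}$ for non-closed domains (Lemma~\ref{lem:separate symmetry is null}). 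Concretely: for $\aa\in\N^G$ and any $\bb\neq\aa$, the restriction $(\aa\cap\bb)^o$ lies in $\srstr(\aa)$, so Hoover's result yields conditional independence over $\sigma(\FF_\gamma:\gamma\in\srstr(\aa))=\sigma(\bS_{\srstr(\aa)})$. The coding lemma then finishes immediately. This gives both your (i) and the $\bS_{\srstr(\aa)}$-dependence of (ii) simultaneously, with no peeling and no directing-measure measurability to check.

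Your proposed route, by contrast, would have to earn from scratch precisely the ``main obstacle'' you flag: that the de~Finetti directing measures at each peeling stage are measurable with respect to the appropriate $\FF_\bb$. That is essentially the content of Hoover's result specialized to this situation; the paper treats it as a black box rather than reproving it. So your plan is not wrong in spirit, but it is both substantially longer and, as written, incomplete at the one hard point --- and the paper's shortcut via Proposition~\ref{prop: Hoover conditional independence} and Lemmas~\ref{lem: symmetric variables with respect to X}--\ref{lem:separate symmetry is null} is the intended way around it.
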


This lemma is  a consequence of Lemma \ref{lem: coding lemma} and will be proved in the following subsection.

\subsection{Proofs of Proposition \ref{prop: conditional independence on disjoint graphs, 1} and Lemma \ref{lem: final piece}}\label{subsection: proof}
{Recall the following three objects:
the finite DAG $G$, the sequence $\CC$ of distinct closed subsets of $G$, and the $\CC$-type DAG-exchangeable random array collection $\X$. As in the proof of Proposition~\ref{prop 3}, we will assume that $\CC$ is the set $\AA_G$ with some fixed ordering.}

Let us say that a $G$-automorphism $\tau$ is \textbf{separated} if for all $v \in G$, there exists a permutation $\tau_v$ on $\N$ such that $\tau(\bb)(v)=\tau_v(\bb(v))$ for all $\bb \in \N^G$. The term comes from the fact that an array is separately exchangeable if and only if its distribution is invariant under the action of every separated $G$-automorphism. 
Note that every DAG-exchangeable array collection, including our $\X$, is automatically separately exchangeable.
Therefore, for each (not necessarily closed) subset $H \subseteq G$ and multi-index $\aa \in \N^H$, we may define $\FF_\aa^{\sep}$ to  be the $\sigma$-field of all events which are invariant under the actions of all separated $\aa$-fixing $G$-automorphisms.
\begin{equation*}
\mathcal{F}_\aa^\sep \defeq \sigma\Big(\big\{\X^{-1}(B) ~:~ 
\begin{array}[t]{@{}l@{}}
        B \text{ is Borel, \,and\, } 
        \text{if } \big((x_{\bb} : \bb \in \N^C) : C \in \CC \big) \in B, 
        \\[0.5ex]
        \quad
        \text{then } \big((x_{\tau(\bb)} : \bb \in \N^C) : C \in \CC \big) \in B
        \text{ for all separated $\aa$-fixing $\tau$}\big\}\Big).
\end{array}
\end{equation*}
It is important to remember that the domain $H\subset G$ of $\aa \in \N^H$ here is not necessarily closed.
We will use the letter $H$ below to denote such a general subset of $G$, while continuing our convention that $C$ and $D$
denote closed subsets. Since a Borel set $B$ on the right-hand side above has less $G$-automorphisms it has to be invariant with respect to, compared to the definition of $\FF_\aa$ in \eqref{def:F_alpha}, it follows that $\FF_\aa\subset \mathcal{F}_\aa^\sep$.

The missing ingredient, common to the proofs of both Proposition \ref{prop: conditional independence on disjoint graphs, 1} and Lemma \ref{lem: final piece}, 
is the following conditional independence result which appears as Corollary 5.6 in the celebrated paper of Hoover \cite{hoover1979relations}:
\begin{prop}\label{prop: Hoover conditional independence} Define $\FF_\aa ^{\sep}$ as above. Let $I_1, I_2, I_3 \subseteq \bigcup_{H \subseteq G} \N^H$ be such that, for all $\aa_1 \in I_1$ and $\aa_2 \in I_2$, we have $\aa_1 \cap \aa_2 \in I_3$. Then, $(\FF_\aa ^{\sep}: \aa \in I_1)$ is {conditionally} independent from $(\FF_\aa ^{\sep}: \aa \in I_2)$ given $(\FF_\aa ^{\sep}: \aa \in I_3)$.
\end{prop}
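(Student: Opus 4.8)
The statement is, essentially verbatim, Hoover's Corollary 5.6, so the plan is to reduce our situation to the classical one rather than reprove it from scratch. First I would record that DAG-exchangeability of $\X$ forces separate exchangeability (as already noted in the text): the separated $G$-automorphisms are precisely the elements of the product group $\prod_{v\in G}S_{\N}$ acting coordinatewise on $\N^G$, and $\X$ is invariant under this subgroup. Next, a $\CC$-type array collection is nothing but a single random field into the product Borel space $\prod_{C\in\CC}\XX$ indexed over the finite vertex set $G$, so we are exactly in the setting of a separately exchangeable array over the $|G|$ coordinate axes $\{\N_v:v\in G\}$ --- Hoover's framework, once one performs the routine reduction of separate to joint exchangeability (or uses the separately-exchangeable form of the representation theorem, \cite[Ch.~7]{kallenberg2006probabilistic}). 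Under this dictionary the $\sigma$-fields $\FF_\aa^{\sep}$ are Hoover's coordinate-fixing (``tail'') $\sigma$-fields attached to the partial index $\aa$, and $\aa_1\cap\aa_2$ --- partial indices intersected as sets of (vertex, value) pairs --- is his notion of common part. The hypothesis ``$\aa_1\cap\aa_2\in I_3$ for all $\aa_1\in I_1,\aa_2\in I_2$'' then matches the hypothesis of Corollary 5.6 of \cite{hoover1979relations}, and its conclusion is ours.

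If a self-contained argument is wanted (Hoover proves Corollary 5.6 en route to the representation theorem, so invoking the representation is not circular), I would run it through the Aldous--Hoover representation. Carry out the canonical construction to obtain i.i.d.\ uniform random variables $(\zeta_\bb:\bb\in\bigcup_{H\subseteq G}\N^H)$, one per partial index, together with measurable functions $g_C$ with $X_{C,\aa}\ed g_C\big(\zeta_{\aa|_H}:H\subseteq\dom(\aa)\big)$ jointly in $C$ and $\aa$. A separated $\aa$-fixing automorphism $\tau=(\tau_v)$ transports $\zeta_\bb$ to $\zeta_{\tau\bb}$, and fixes exactly those $\bb$ with $\bb\le\aa$ (meaning $\dom(\bb)\subseteq\dom(\aa)$ and $\bb=\aa|_{\dom\bb}$); a Hewitt--Savage zero--one law applied along the ``free'' values at every coordinate then yields $\FF_\aa^{\sep}=\sigma(\zeta_\bb:\bb\le\aa)$ modulo null sets, simultaneously for all $\aa$. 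The proposition is now the elementary fact that, for a family of independent random variables $(\zeta_i)_i$, the $\sigma$-fields $\sigma(\zeta_i:i\in A)$ and $\sigma(\zeta_i:i\in B)$ are conditionally independent given $\sigma(\zeta_i:i\in D)$ whenever $A\cap B\subseteq D$: take $A=\{\bb:\bb\le\aa_1,\ \aa_1\in I_1\}$, $B$ likewise with $I_2$, and $D$ likewise with $I_3$; if $\bb\le\aa_1$ and $\bb\le\aa_2$ then $\bb$ restricts both $\aa_1$ and $\aa_2$, hence $\bb\le\aa_1\cap\aa_2\in I_3$, so $A\cap B\subseteq D$, and we are done.

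The main obstacle, in either route, is the identification of $\FF_\aa^{\sep}$ with $\sigma(\zeta_\bb:\bb\le\aa)$ --- equivalently, the translation that makes Corollary 5.6 of \cite{hoover1979relations} literally applicable. One must check that the canonical representation can be arranged so this holds coherently for every partial index $\aa$ at once, and that passing from separate to joint exchangeability and from a single array to a collection leaves the coordinate-fixing $\sigma$-fields intact. Once that is granted, the conditional independence itself is the purely combinatorial statement about disjoint blocks of an i.i.d.\ family above, and everything else is bookkeeping.
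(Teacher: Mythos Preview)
Your first route is precisely what the paper does: the proposition is stated as Hoover's Corollary~5.6 and is not proved in the paper at all --- it is simply cited as the ``missing ingredient'' imported from \cite{hoover1979relations}. So on that score you match the paper exactly.

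Your second route goes beyond the paper by sketching how one would actually prove it, via the classical separately-exchangeable Aldous--Hoover representation. This is legitimate and not circular: the paper's goal is the \emph{DAG-exchangeable} representation, while the classical separately-exchangeable one is already available and can be applied to $\X$ (which, as you note, is separately exchangeable). The identification $\FF_\aa^{\sep}=\sigma(\zeta_\bb:\bb\le\aa)$ modulo null sets is indeed the heart of the matter, and you are right to flag it as the main obstacle; it is a known fact in the classical theory (the ``shell $\sigma$-field'' identification), but requires more than a one-line Hewitt--Savage argument --- one needs both inclusions, and the direction $\sigma(\zeta_\bb:\bb\le\aa)\subseteq\FF_\aa^{\sep}$ only holds after noting that the $\zeta_\bb$'s may live on an extended space, so the equality is really between the $\X$-measurable parts. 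Your final combinatorial step (if $\bb\le\aa_1$ and $\bb\le\aa_2$ then $\bb\le\aa_1\cap\aa_2$) is correct. One minor remark: your parenthetical about circularity is slightly garbled --- Hoover proving 5.6 \emph{before} his representation theorem means his route avoids circularity, whereas your alternative route uses the (now-established) classical representation, which is fine for a different reason, namely that it is already a theorem.
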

\noindent{In the proposition, $\aa_1 \cap \aa_2$ means the restriction of $\aa_1$ to the set of vertices that get mapped to the same values by $\aa_1$ and $\aa_2$.}

\vspace{3mm}

\noindent{\bf Overview of the proof of Proposition~\ref{prop: conditional independence on disjoint graphs, 1}}
\vspace{3mm}

First note that if we replaced $\FF_C$ (respectively for the $C_i$ and $C\cap C_i$) in Proposition~\ref{prop: conditional independence on disjoint graphs, 1} by 
	\be\label{sepsigma}
	\sigma(\{\FF_\aa ^{\sep}:\aa\in I\}),\qquad I=\bigcup_{H \subseteq C} \N^H
	\ee
	 (respectively for $C_i$ and $C\cap C_i$), then the result would immediately follow from Proposition \ref{prop: Hoover conditional independence}.
While the $\sigma$-fields related to DAG-exchangeability in Proposition~\ref{prop: conditional independence on disjoint graphs, 1} are not of the type in \eqref{sepsigma}, using the structure of $I_C$ and the fact that $C$ is closed, it is possible to express $\FF_C$ as 
\be \label{sep dag rel}
\FF_C=\sigma(\{\FF_\aa ^{\sep}: \aa \in I_C\})
\ee 
(similarly for $C_i$ and $C\cap C_i$). This is established in Lemma~\ref{lem: symmetric variables with respect to X} below. A final order of business required to employ Proposition \ref{prop: Hoover conditional independence} is a sort of converse: we also need to express \eqref{sepsigma} in terms of $\sigma$-fields related to DAG-exchangeability since these are the $\sigma$-fields that one conditions on in Proposition \ref{prop: Hoover conditional independence}. This will be done in Lemma \ref{lem:separate symmetry is null}.

\vspace{3mm}

For $\aa \in \N^G$, define
$$\bX_\alpha \defeq (X_{C,\aa|_C}: C \in \CC)$$
so that henceforth $$\bX\defeq (\bX_\aa:\aa \in \N^G),$$ which we view as an array of arrays.
{Using this notation, we may rewrite \eqref{def:F_alpha} as
\begin{equation}\label{eq: Faa2}
\mathcal{F}_\aa = \sigma\Big(\big\{\bX^{-1}(B) ~:~ 
\begin{array}[t]{@{}l@{}}
        B \text{ is Borel,\, and\, } 
        \text{if } \big(x_{\bb} :  {\bb}\in \N^G \big) \in B, 
        \\[0.5ex]
        \quad
        \text{then } \big(x_{\tau(\bb)} : \bb \in \N^G \big) \in B
        \text{ for all $\aa$-fixing $\tau$}
        \big\}\Big)
\end{array}
\end{equation}}
Let us point out that an $\aa$-fixing $G$-automorphism $\tau$, {acting on $\bX$,} fixes $\aa|_C$ for {$C \in \AA_{\dom(\aa)}$,} but {does not necessarily fix $\aa|_H$ for any arbitrary  (non-closed) subset $H$.} We assume that $\FF_\aa^\sep$ is rewritten similarly as a $\sigma$-field defined in terms of $\bX$, instead of $\X$.


For a subset $H\subseteq G$, not necessarily   closed, let $H^o$ denote the largest subset of $H$ which is   closed in $G$. The closed graph $H^o$ is well-defined since a union of   closed subsets is again   closed. For example, consider the case that the vertex and edge sets of $G$ are $\{v_1,v_2,v_3,v_4\}$ and $\{\overrightarrow{v_1v_2}, \overrightarrow{v_2v_3}, \overrightarrow{v_3v_4}\}$, respectively. If $H_1=\{v_1,v_3\}$ and $H_2=\{v_2,v_3,v_4\}$, then $H_1^o=\{v_1\}$ and $H_2^o=\emptyset$.

The \textit{closure} of a subset $H \subseteq G$ is the smallest closed subset containing $H$.

\begin{lem}\label{lem: symmetric variables with respect to X}
Let $\aa \in I_G$. Then, 
$$
        \FF_\aa 
        = \bigcap_{n \geq 1} \FF_\aa^n
        = \bigcap_{n \geq 1} \mathcal{F}_\aa^{\sep,n},
$$ 
where 
$$
        \FF_\aa ^n\defeq\sigma\Big(\big\{\bX_\bb: \bb \in \N^G \text{ \rm and there is } C \,{\in}\, \AA_{\dom(\aa)} \text{ \rm s.t. } \bb|_C=\aa|_C \text{ \rm and } \bb(v)>n \text{ \rm for all } v \,{\notin}\, C\big\}\Big)
$$ 
and 
$$
        \mathcal{F}^{\sep,n}_\aa\defeq\sigma\Big(\big\{\bX_\bb: \bb \in \N^G \text{ \rm and there is } H \,{\subseteq}\, \dom(\aa) \text{ \rm s.t. } \bb|_H=\aa|_H \text{ \rm and } \bb(v)>n \text{ \rm for all } v \,{\notin}\, H\big\}\Big).
$$
In particular, for all $\aa\in I_G$, $$\FF_\aa ^{\sep}=\FF_\aa .$$
\end{lem}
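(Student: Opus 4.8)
The plan is to adapt the classical identification of an invariant $\sigma$-field with a decreasing intersection of ``far-field'' $\sigma$-fields (as in the treatments of exchangeable arrays in \cite{kallenberg2006probabilistic,aldous1985exchangeability}), paying attention to the one feature special to DAGs: an $\aa$-fixing $G$-automorphism $\tau$ automatically fixes $\aa|_C$ for every \emph{closed} $C\subseteq\dom(\aa)$ (since $\tau$ then induces a well-defined $\aa|_C$-fixing map on $\N^C$), whereas a \emph{separated} $\aa$-fixing $G$-automorphism fixes $\aa|_H$ for \emph{every} $H\subseteq\dom(\aa)$, closed or not; this is precisely why $\FF_\aa^n$, built from closed $C$, pairs with $\FF_\aa$, while $\GG_\aa^n$, built from arbitrary $H$, pairs with $\FF_\aa^{\sep}$. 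Two inclusions are immediate --- $\FF_\aa^n\subseteq\GG_\aa^n$ (take $H=C$ in the generators), hence $\bigcap_n\FF_\aa^n\subseteq\bigcap_n\GG_\aa^n$, and $\FF_\aa\subseteq\FF_\aa^{\sep}$ (invariance under all $\aa$-fixing automorphisms is stronger than invariance under only the separated ones) --- so it suffices to prove (I) $\FF_\aa\subseteq\bigcap_n\FF_\aa^n$ and $\FF_\aa^{\sep}\subseteq\bigcap_n\GG_\aa^n$, and (II) $\bigcap_n\GG_\aa^n\subseteq\FF_\aa$. Indeed, the chains $\FF_\aa\subseteq\FF_\aa^{\sep}\subseteq\bigcap_n\GG_\aa^n\subseteq\FF_\aa$ and $\FF_\aa\subseteq\bigcap_n\FF_\aa^n\subseteq\bigcap_n\GG_\aa^n$ then force all five $\sigma$-fields to coincide, which is the assertion (in particular $\FF_\aa^{\sep}=\FF_\aa$).

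For (I), given an event $E\in\FF_\aa$ (resp.\ $E\in\FF_\aa^{\sep}$), I would approximate it in $L^1$ by an event $E_\eps$ depending only on $\bX_\bb$ for $\bb$ in some finite $F\subseteq\N^G$. The key ingredient is a combinatorial lemma: for every finite $F$ and every $n$ exceeding the (finitely many) values of $\aa$, there is an $\aa$-fixing $G$-automorphism $\tau$ --- a \emph{separated} one in the second case --- with $\tau(F)$ contained in the index set defining the generators of $\FF_\aa^n$ (resp.\ $\GG_\aa^n$). Concretely, for each $\bb\in F$ one keeps $\tau(\bb)$ equal to $\aa$ on the largest closed subset of $\dom(\aa)$ on which $\bb$ agrees with $\aa$ (on all of $\{v\in\dom(\aa):\bb(v)=\aa(v)\}$ in the separated case) and pushes every other coordinate past $n$, building the required permutations vertex by vertex in a topological order so that they are consistent with the parent-context description of a $G$-automorphism in Example~\ref{ex:G-auto}(d); the ``$\aa$-context'' constraints never clash with the ``push past $n$'' prescriptions because $n$ exceeds all values of $\aa$. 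Given such $\tau$, the event $E_\eps\circ\tau$ is $\FF_\aa^n$-measurable (resp.\ $\GG_\aa^n$-measurable); since $E$ is $\tau$-invariant and $\bX\d=\tau(\bX)$ by DAG-exchangeability, $\|E_\eps\circ\tau-E\|_{L^1}=\|E_\eps-E\|_{L^1}<\eps$, and letting $\eps\to0$ places $E$ in $\FF_\aa^n$ (resp.\ $\GG_\aa^n$) for every $n$.

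For (II), let $Y$ be bounded and $\bigcap_n\GG_\aa^n$-measurable and let $\tau$ be any $\aa$-fixing $G$-automorphism; I must show $Y$ is $\tau$-invariant, which then gives $Y\in\FF_\aa$. Fixing a finite $F\subseteq\N^G$, I would ``truncate'' $\tau$ to an $\aa$-fixing $G$-automorphism $\rho$ that agrees with $\tau$ on $F$ but whose defining permutations are the identity on all sufficiently large contexts and sufficiently large values --- possible because only finitely many indices arise when $\tau$ is evaluated on $F$ and on $\aa$, and this is essentially the ultrahomogeneity of $(\N^G,\ G\text{-automorphisms})$ discussed in the Remark and established in the appendix. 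For $n$ larger than the finitely many values at which $\rho$ is non-trivial, an induction in topological order shows that $\rho$ fixes \emph{pointwise} every index in the generating set of $\GG_\aa^n$: at each vertex the incoming context is either the $\aa$-context, where $\rho$ fixes the relevant $\aa$-value, or it carries a large coordinate, where $\rho$ acts trivially, and the value fed in is either an $\aa$-value or is large and hence fixed by $\rho$. Hence $Y\circ\rho=Y$ identically, and combining with $\bX\d=\rho(\bX)$ yields $(Y,\ \bX_\bb:\bb\in F)\d=(Y,\ \bX_{\rho\bb}:\bb\in F)=(Y,\ \bX_{\tau\bb}:\bb\in F)$; as $F$ was arbitrary, Kolmogorov's extension theorem gives $(Y,\bX)\d=(Y,\tau(\bX))$, i.e.\ $Y$ is $\tau$-invariant.

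The measure-theoretic steps above --- approximation by cylinder events, use of $\bX\d=\tau(\bX)$, $L^1$-closedness of the sub-$\sigma$-fields, Kolmogorov extension --- are routine, and I expect the main obstacle to be the two combinatorial lemmas on $G$-automorphisms with prescribed finite behaviour, above all the truncation lemma in (II): one must realize a finite partial specification of an automorphism by an honest $\aa$-fixing $G$-automorphism that is the identity at infinity while respecting how permutation contexts propagate from parents to children, and the bookkeeping in the induction that this $\rho$ fixes the entire far-field index set of $\GG_\aa^n$ is the technical heart of the proof.
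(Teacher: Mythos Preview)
Your overall strategy---identifying $\FF_\aa$ with an intersection of far-field $\sigma$-fields via push/truncate maps---matches the paper, and your argument for (II) and for $\FF_\aa^{\sep}\subseteq\bigcap_n\GG_\aa^n$ is essentially the paper's (their $\TT_n$ is exactly your ``identity on large contexts/values'' class, and their key observation that $v\in H\setminus H^o$ has an ancestor outside $H$ is your ``the incoming context carries a large coordinate'').

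There is, however, a genuine gap in your construction for $\FF_\aa\subseteq\bigcap_n\FF_\aa^n$. Take $G$ to be the chain $v_1\to v_2\to v_3$ with $\dom(\aa)=G$, and put $\bb=(b_1,a_2,a_3)\in F$ with $b_1\neq a_1$. The only closed $C\subseteq\dom(\aa)$ on which $\bb$ agrees with $\aa$ is $C=\emptyset$, so you must push \emph{all} coordinates of $\tau(\bb)$ past $n$. But in the parent-context form $\tau(\bb)(v_3)=\pi^{(v_3)}_{\bb(v_2)}(\bb(v_3))=\pi^{(v_3)}_{a_2}(a_3)$, and the $\aa$-fixing constraint forces $\pi^{(v_3)}_{a_2}(a_3)=a_3\leq n$. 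Your sentence ``the $\aa$-context constraints never clash with the push-past-$n$ prescriptions because $n$ exceeds all values of $\aa$'' is therefore false: the clash occurs precisely when a vertex and its parents agree with $\aa$ but a more distant ancestor does not, so that the vertex lies outside the largest closed agreement set while the parent-context permutation cannot see the disagreement.

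The paper handles this step differently: it uses a $G$-homomorphic \emph{injection} $\rho_n$ (not a $G$-automorphism in parent-context form) defined by $\rho_n(\bb)(v)=\bb(v)$ if $\bb$ agrees with $\aa$ on \emph{all} ancestors $u\preceq v$, and $\rho_n(\bb)(v)=\bb(v)+n$ otherwise. This map lets the action at $v$ depend on grandparents and beyond, which is exactly what is needed and exactly what the parent-context description forbids. Invariance of $\FF_\aa$ under such injections then comes from the ultrahomogeneity remark. Note, incidentally, that the paper itself flags $\FF_\aa\subseteq\bigcap_n\FF_\aa^n$ as not needed for the downstream applications, but it is part of the lemma as stated.
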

\begin{proof} 
Pick $\alpha \in I_G$. Define an injection $\tau_k: \N \to \N$, where $$\tau_k(k)=k, \quad \tau_k(k-1)=k+1,\quad \text{ and }\quad \tau_k(m)=m+1 \ \text{ otherwise}.$$ Let $\rho_1$ be the injection on $\N^G$ such that $\rho_1(\bb)(v)=\tau_{\aa(v)}(\bb(v))$ if $v \in \dom(\aa)$, and $\rho_1(\bb)(v)=\bb(v)+1$ otherwise. 
        
        The injection $\rho_1$ can be made to act on any $\bX$-measurable $Y$. This is because such $Y$ is equal to $f(\bX)$ almost surely for some measurable $f$ and we can define $\rho_1(Y)$ to be $f(\rho_1(\bX))$.\footnote{$\rho_1(\bX) = (\bX_{\rho_1(\alpha)})_\alpha$.} The choice of $f$ does not matter here for the following reason. The DAG-exchangeability of $\bX$ implies $\rho_1(\bX) \d= \bX$. Thus, $(\bX,f(\bX),g(\bX)) \d= (\rho_1(\bX),f(\rho_1(\bX)),g(\rho_1(\bX)))$ for all measurable $f$ and $g$. This in turn implies that whenever $f(\bX) = g(\bX)$ almost surely, we also have almost sure equality between $f(\rho_1(\bX))$ and $g(\rho_1(\bX))$. 
       
        We can regard each $E \in \FF_\aa$ as an $\bX$-measurable random variable $\mathbf{1}_E$ and apply $\rho_1$ to it. The outcome $\rho_1(E)$ of this application is the same as $E$.  Thus, for all $E \in \FF_\aa$, we have that $\rho^n_1(E) = E$ almost surely for all $n$. Meanwhile, by construction, $\rho_1^n (F) \in \mathcal{F}^{\sep,n}_\aa$ for any event $F$. Thus, we obtain $\FF_\aa \subseteq \mathcal{F}^{\sep,n}_\aa$, which implies the inclusion $\FF_\aa \subseteq \bigcap_{n \geq 1} \mathcal{F}^{\sep,n}_\aa$.

        Now we show the other inclusion. Consider the following two conditions on $G$-automorphisms~$\rho$: 
        \begin{enumerate}
                \item $\rho$ fixes $\aa$; 
                \item $\rho(\bb)(v)=\bb(v)$ for all $\bb$ and { $v$ having some $u \preceq v$ with $\bb(u)>n$}. 
        \end{enumerate}
        In the second condition, we use the partial order $u \preceq v$ introduced earlier, which means that there is a path of length possibly zero from the vertex $u$ to the vertex $v$ in $G$. Let $\TT_n$ be the set of $G$-automorphisms satisfying these two conditions. Then, $\bigcup_{n\in \N}\TT_n$  generates all the finite\footnote{A $G$-automorphism $\tau$ is finite if $\tau(\aa)=\aa$ for all but finitely many $\aa \in \N^G$.} $\aa$-fixing $G$-automorphisms. We claim that $\mathcal{F}^{\sep,n}_\aa$ is invariant under the action of any $\rho\in \TT_n$.

        To see the claim, fix $H \subseteq \dom(\aa)$ and $\bb \in \N^G$ such that $\bb|_H=\aa|_H$ and $\bb(v)>n$ for all $v \not\in H$. Then, $\rho(\bb)(v)=\bb(v)$ for $v \notin H$ or $v \in H^o$; see the remark following \eqref{eq: Faa2}. For $v \in H\setminus H^o$, there exists $u \notin H$ such that $u \prec v$. Otherwise, the closure of $H^o \cup \{v\}$ 
        is in $H$, contradicting the maximality of $H^o$. But then $\bb(u) > n$ by the choice of $\bb$. Thus $\rho(\bb)(v)=\bb(v)$ holds again in this case.

        Combined with the remark {following Proposition \ref{prop 3}}, the claim implies that $\bigcap_{n \geq 1} \mathcal{F}^{\sep,n}_\aa$ is invariant under all $\aa$-fixing $G$-automorphisms. Therefore, $\bigcap_{n \geq 1} \mathcal{F}^{\sep,n}_\aa \subseteq \FF_{\aa}$. Together with our proof for the other inclusion, this gives $\FF_\aa = \bigcap_{n\geq 1} \mathcal{F}^{\sep,n}_\aa$, as desired.

        The equality $\FF_\aa^{\sep}=\bigcap_{n \geq 1} \mathcal{F}^{\sep,n}_\aa$ is a standard fact. It can also be obtained by repeating our argument for $\FF_\aa =\bigcap_{n\geq 1} \mathcal{F}^{\sep,n}_\aa$ {for the DAG which has the same vertex set as $G$, but has no edges}. Thus, if $\dom(\aa)$ is a closed subset (equivalently, $\aa\in I_G$), we have that $\FF_\aa ^{\sep}=\FF_\aa $.

        Since $\FF_\aa ^n \subseteq \mathcal{F}_\aa^{\sep,n}$, to complete the proof, we just need to show that $\FF_\aa \subseteq \bigcap_{n\geq 1} \FF_\aa ^n $. We point out that proving this inclusion is not needed for what we are trying to show in this subsection, namely, Proposition~\ref{prop: conditional independence on disjoint graphs, 1} and Lemma~\ref{lem: final piece}. However, we spell out the proof here since 
$\FF_\aa \subseteq \bigcap_{n\geq 1} \FF_\aa ^n $ is a natural statement which may be useful for future related works.

        Let $n$ be a natural number large enough that $n > \max_{v \in \dom(\aa)}\aa(v)$. Define $\rho_n(\bb)(v)=\bb(v)$ if $v \in \dom(\aa)$ and $\bb(u)=\aa(u)$ for all $u \preceq v$, and $\rho_n(\bb)(v)=\bb(v)+n$ otherwise. Then, $\rho_n$ is an injection and satisfies the following claim.

        \begin{quote} 
                {\sc Claim.} Let $H$ be a subset of $\dom(\aa)$ that is not necessarily closed. Then, for all $\bb \in \indices G$, if $\bb|_H=\aa|_H$ and $\bb(v) > \aa(v)$ for all $v \in \dom(\aa) \setminus H$, we have $\rho_n(\bb)(v)=\bb(v)$ for $v \in H^o$, and $\rho_n(\bb)(v)=\bb(v)+n$ for $v \not\in H^o$.
        \end{quote}

        It is easy to see why the claim holds for $v \in H^o$ or $v \notin H$. If $v \in H \setminus H^o$, there exists $u \in \dom(\aa)\setminus H$ such that $u \prec v$; otherwise, the closure of $H^o \cup \{v\}$ is in $H$, contradicting the maximality of $H^o$. Therefore, $\bb(u) \neq \aa(u)$. This implies that $\rho_n(\bb)(v)=\bb(v)+n$, proving the claim.

        By the claim, $\rho_n (F) \in \FF_\aa^n$ for every $F \in \mathcal{F}_\aa^{\sep,n}$. Meanwhile, $\rho_n(E)=E$ for every $E \in \FF_\aa $. Also, $\FF_\aa \subseteq \mathcal{F}_\aa^{\sep,n}$. Thus, $\FF_\aa \subseteq \FF_\aa ^n$.
\end{proof}


\begin{lem}\label{lem:separate symmetry is null} Let $\aa \in \N^H$ and $H \subseteq G$. 
        Define $\aa^o\defeq\aa|_{\dom(\aa)^o}$.  Then, $\FF_\aa^{\sep}=\FF_{\aa^o}$. 
\end{lem}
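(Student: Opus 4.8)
The plan is to show the two inclusions $\FF_\aa^\sep \subseteq \FF_{\aa^o}$ and $\FF_{\aa^o} \subseteq \FF_\aa^\sep$ separately, using the characterization of the relevant $\sigma$-fields in terms of invariance under appropriate families of $G$-automorphisms, together with the $\bigcap_n \GG^n$-description from Lemma \ref{lem: symmetric variables with respect to X}. First I would unwind definitions: $\FF_\aa^\sep$ is generated by events invariant under all \emph{separated} $\aa$-fixing $G$-automorphisms, while $\FF_{\aa^o}$ (since $\aa^o \in I_G$, its domain being closed) equals $\FF_{\aa^o}^\sep$ by the last assertion of Lemma \ref{lem: symmetric variables with respect to X}, and also equals $\bigcap_{n\ge 1}\GG_{\aa^o}^n$. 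So it suffices to compare the invariance requirements coming from separated $\aa$-fixing automorphisms versus separated $\aa^o$-fixing automorphisms.

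For the inclusion $\FF_{\aa^o}^\sep \subseteq \FF_\aa^\sep$: every separated $\aa$-fixing $G$-automorphism $\tau$ is also $\aa^o$-fixing, since $\aa^o$ is a restriction of $\aa$ (this is exactly the monotonicity remark ``restricting $\aa$ shrinks $\mathcal{F}_\aa$'' from after Proposition \ref{prop 3}, applied in the separated setting). Hence an event invariant under all separated $\aa^o$-fixing automorphisms is in particular invariant under all separated $\aa$-fixing automorphisms, giving $\FF_{\aa^o}^\sep \subseteq \FF_\aa^\sep$, i.e. $\FF_{\aa^o} \subseteq \FF_\aa^\sep$.

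The substantive direction is $\FF_\aa^\sep \subseteq \FF_{\aa^o}$, and this is where the ``closed part'' $\dom(\aa)^o$ really does the work. The key point is that for a vertex $v \in \dom(\aa)\setminus\dom(\aa)^o$, there exists $u \in G\setminus\dom(\aa)$ with $u \prec v$ — otherwise the closure of $\dom(\aa)^o\cup\{v\}$ would still be contained in $\dom(\aa)$, contradicting maximality of $\dom(\aa)^o$ (this is the same maximality argument used twice in the proof of Lemma \ref{lem: symmetric variables with respect to X}). The plan is then to exhibit, for any separated $\aa^o$-fixing $G$-automorphism $\sigma$ and any $n$, a separated $\aa$-fixing $G$-automorphism $\rho$ that agrees with $\sigma$ on all multi-indices ``relevant at level $n$'' — concretely, one builds $\rho$ vertexwise: on $v\in \dom(\aa)^o$ set $\rho_v$ to fix $\aa(v)$ and otherwise mimic $\sigma_v$; on $v\in\dom(\aa)\setminus\dom(\aa)^o$, fix $\aa(v)$ (a single extra fixed point, harmless); on $v\notin\dom(\aa)$, mimic $\sigma_v$. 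Separatedness is preserved because $\sigma$ is separated, and $\rho$ fixes $\aa$ by construction. One then checks, via the $\bigcap_n\GG_{\aa^o}^n$ representation, that invariance of an event under all such $\rho$ forces invariance under all separated $\aa^o$-fixing automorphisms: for a generator $\bX_\bb$ of $\GG_{\aa^o}^n$ (so $\bb|_H = \aa^o|_H$ and $\bb(v)>n$ off $H$ for some $H\subseteq\dom(\aa^o)=\dom(\aa)^o$), the action of $\sigma$ and of the matching $\rho$ coincide on $\bb$, because the only place they could differ is on coordinates $v\in\dom(\aa)\setminus\dom(\aa)^o$, and for such $v$ either $v\notin H$ so $\bb(v)>n$ and we may absorb it into the large-index part, or we use the $u\prec v$ fact to push the discrepancy past level $n$ exactly as in the Claim inside the proof of Lemma \ref{lem: symmetric variables with respect to X}. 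This yields $\FF_\aa^\sep \subseteq \bigcap_n\GG_{\aa^o}^n = \FF_{\aa^o}$, completing the proof.

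I expect the main obstacle to be the bookkeeping in this last step — making precise the sense in which ``$\rho$ mimics $\sigma$ on level-$n$ relevant indices'' and verifying that this is enough to conclude equality of the generated $\sigma$-fields, rather than merely of finitely many generators. The cleanest route is probably to mirror the structure of the proof of Lemma \ref{lem: symmetric variables with respect to X} as closely as possible: reduce to finite ($\aa^o$-fixing, separated) automorphisms, which are generated by those in the analogue of $\TT_n$, and check the invariance claim level by level, reusing verbatim the maximality-of-$H^o$ argument to handle the non-closed part of $\dom(\aa)$.
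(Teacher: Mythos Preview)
Your easy inclusion $\FF_{\aa^o}\subseteq\FF_\aa^{\sep}$ is fine and matches the paper. The hard direction, however, has a genuine logical gap.

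Your plan is: given a separated $\aa^o$-fixing $\sigma$, build a separated $\aa$-fixing $\rho$, check that $\sigma$ and $\rho$ agree on the generators $\bX_\bb$ of $\GG_{\aa^o}^n$, and conclude that an event $E\in\FF_\aa^{\sep}$ (invariant under $\rho$) is invariant under $\sigma$. Two things break. First, the agreement claim is false: on a coordinate $v\in\dom(\aa)\setminus\dom(\aa)^o$ you force $\rho_v(\aa(v))=\aa(v)$ while $\sigma_v$ need not fix $\aa(v)$, so $\rho_v\neq\sigma_v$; for a generator $\bb$ of $\GG_{\aa^o}^n$ you have $\bb(v)>n$ at such $v$, and there is no reason $\rho_v(\bb(v))=\sigma_v(\bb(v))$. ``Absorbing into the large-index part'' does not make the two images equal. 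Second, even if agreement on those generators held, the deduction ``$\rho(E)=E\Rightarrow\sigma(E)=E$'' would require $E\in\GG_{\aa^o}^n$, which you have not established---and indeed that is essentially the conclusion you are after, since $\FF_{\aa^o}=\bigcap_n\GG_{\aa^o}^n$.

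The paper's route avoids both issues by working with $\GG_\aa^n$ rather than $\GG_{\aa^o}^n$, and by reversing the quantifier structure. One first records $\FF_\aa^{\sep}=\bigcap_n\GG_\aa^n$ (Lemma~\ref{lem: symmetric variables with respect to X} applied to the edgeless graph on the same vertex set). Then one shows that every (not necessarily separated) $G$-automorphism $\tau$ which fixes $\aa^o$ \emph{and} satisfies the $\TT_n$-condition ($\tau(\bb)(v)=\bb(v)$ whenever some $u\preceq v$ has $\bb(u)>n$) actually \emph{fixes each generator} of $\GG_\aa^n$; the maximality-of-$H^o$ argument you cite handles the case $v\in H\setminus H^o$, and the case $v\in H^o$ uses only that $\tau$ fixes $\aa^o$ because $H^o\subseteq\dom(\aa)^o$. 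Since such $\tau$ generate all finite $\aa^o$-fixing $G$-automorphisms, one obtains $\bigcap_n\GG_\aa^n\subseteq\FF_{\aa^o}$ directly. Your final paragraph (``reduce to the analogue of $\TT_n$'') is pointing at exactly this; the $\rho$-from-$\sigma$ construction should be dropped in favor of it.
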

\begin{proof} It is clear that $\FF_{\aa^o}=\FF_{\aa^o}^{\sep} \subseteq \FF_\aa ^{\sep}$. Similarly to the proof of the previous lemma, one can show that  $\mathcal{F}_\aa^{\sep,n}$ is invariant under the action of a $G$-automorphism $\tau$ if 
        \begin{enumerate}
        \item $\tau$ fixes $\aa^o$, and 
        \item $\tau(\bb)(v)=\bb(v)$ for all $\bb$ and $v$ having some $u \preceq v$ with $\bb(u)>n$. 
        \end{enumerate}
        This shows that $\bigcap_{n \geq 1} \mathcal{F}_\aa^{\sep,n} \subseteq \FF_{\aa^o}$. One can show that $\FF_\aa ^{\sep} = \bigcap_{n \geq 1} \mathcal{F}_\aa^{\sep,n}$ by applying Lemma \ref{lem: symmetric variables with respect to X} to $\bX$, noting that every DAG-exchangeable array is also a \textit{separately} exchangeable array.
\end{proof}

Now we are ready to complete the main task of this subsection, namely, the proofs of Proposition \ref{prop: conditional independence on disjoint graphs, 1} and Lemma \ref{lem: final piece}.

\begin{proof}[Proof of Proposition \ref{prop: conditional independence on disjoint graphs, 1}]
As already noted, by Lemma \ref{lem: symmetric variables with respect to X}, we have
$$
        \FF_C=\sigma(\{\FF_\aa ^{\sep}: \aa \in I_C\})
$$
and
$$
        \FF_{C_i}=\sigma(\{\FF_\aa ^{\sep}: \aa \in I_{C_i}\}).
$$
Now for any $\aa_1 \in I_C$ and $\aa_2 \in I_{C_i}$, we have $\dom(\aa_1 \cap \aa_2) \subseteq C \cap C_i$. Thus, by Proposition \ref{prop: Hoover conditional independence},
$$
        \FF_C \underset{\GG'}{\independent} \FF_{C_1},\ldots, \FF_{C_m} 
$$
where 
$$
        \GG'\defeq\sigma(\{\FF_\aa ^{\sep}: \dom(\aa) \subseteq  C \cap C_i \text{ for some } i \leq m \}).
$$ 
However, Lemma \ref{lem: symmetric variables with respect to X} and Lemma \ref{lem:separate symmetry is null} imply that 
$$
        \GG' = \sigma(\{\FF_\aa : \aa \in I_{C \cap C_i} \text{ for some } i \leq m\}).
$$

\end{proof}

\begin{proof}[Proof of Lemma \ref{lem: final piece}] Fix $\aa \in \N^G$ and $\bb_1,\ldots,\bb_r \in I_G$ such that $\bb_k \neq \aa$. Similar to the proof of Proposition \ref{prop: conditional independence on disjoint graphs, 1}, we can show that
$$S_\aa \underset{\bS_{\srstr(\aa)}}{\independent} S_{\bb_1},\ldots,S_{\bb_r}.$$
Since $\bb_1,\ldots,\bb_r$ are arbitrary, we have
$$S_\aa \underset{\bS_{\srstr(\aa)}}{\independent} (S_\bb : \bb\in I_G, \bb\neq \aa).$$
The joint distributional equality
\eqref{equation: final joint} can now be obtained by applying Lemma \ref{lem: coding lemma} with $\xi_\aa=S_\aa$ and $\eta_\aa=\bS_{\srstr(\aa)}$.
\end{proof}


\appendix

\section{Model-Theoretic Proof of a Simpler Representation Theorem}\label{app:model-theoretic-proof}

Several authors have recently used model-theoretic tools to prove representation theorems for a broad class of exchangeable random structures (e.g.~\cite{ackerman2015,ackermanetal-forum-2016,crane2018relatively,crane2017relative}). 
In the appendix, we prove Theorem \ref{cor 2}, a simplified version of our more general Theorem \ref{cor 2}, using a representation theorem of Crane and Towsner \cite{crane2017relative} which is formulated and proved using model-theoretic tools. We deal with this simplified version only here because we have not yet been able to derive the full version from model theoretic results.

Let $G$ be a finite DAG, $\CC$ a family of closed sets, and ${(\bX_C : C \in \CC)}$ a $\CC$-type random array collection in a Borel space $\XX$. Recall that for each closed set $C$, 
$\indices C$ is the set of $C$-type indices. 

We restate Theorem \ref{cor 2}.

\vspace{2mm}
\noindent{\bf Theorem \ref{cor 2}}\ 
{\it If $\CC$ is the singleton sequence $(G)$ and $(\bX_C : C \in \CC)$ is DAG-exchangeable, then there exists a measurable function $f: [0,1]^{\AA_G} \to \XX$ such that
$$
        \qquad
        \qquad
        \qquad
        \qquad
        \qquad
        \left(X_\alpha : \alpha \in \indices G\right) \ed \Big(f\big(U_{\alpha|_C} : C \in \AA_G\big) : \alpha \in \indices G\Big)
        \qquad
        \qquad
        \qquad
        \qquad
        \eqref{eqn: representation}
$$
        where $\alpha|_C$ is the restriction of $\alpha$ to the vertices in $C$, and the $U_\beta$ are independent $[0,1]$-uniform random variables.
}
\vspace{2mm}
       


\subsection{Review of Crane and Towsner's Representation Theorem}

The following theorem is a minor variant of Crane and Towsner's result in \cite{crane2018relatively}.  In the theorem, we highlight unexplained terminologies with boldface font, to emphasize that we do not expect a reader to understand them at this point.
\begin{thm}[Crane, Towsner]\label{thm: CraneTowsner equivalence relations} Let $\MM=(I,R_1,\ldots,R_n)$ be a countably infinite set $I$ with equivalence relations $R_k$ on it, and $\XX$ a Borel space. Assume that 
\begin{itemize}
        \item $\MM$ is an \textbf{ultrahomogeneous structure}, and
        \item $(R_k : k \leq n)$ is an \textbf{orderly sequence} of equivalence relations.
%
%
%
\end{itemize}
Then, given a family of $\XX$-valued random variables $\bX = (X_\alpha : \alpha \in I)$, if the family is \textbf{relatively exchangeable with respect to $\MM$} (in short, \textbf{$\MM$-exchangeable}), there exists a measurable function $f$ such that
\begin{equation}\label{equation: CraneTowsner representation with equivalence relations}
        \Big(X_\alpha : \alpha \in I\Big) \d= \Big(f\big(U_b : b \in B(\alpha)\big) : \alpha \in I\Big)
\end{equation}
where 
\begin{itemize}
        \item $B(\alpha)$ is the set of all \textbf{anti-chains} in $E(\alpha) \defeq \{[\alpha]_{R_k} ~:~ k\leq n\}\cup\{\{\alpha\}\}$, the collection of all equivalence classes $[\alpha]_{R_k}$ of $\alpha$ with respect to $R_k$'s, partially-ordered by set inclusion, and \item $(U_b : b \in \bigcup_{\alpha \in I} B(\alpha))$ is a collection of independent $[0,1]$-uniform random variables.
\end{itemize}
\end{thm}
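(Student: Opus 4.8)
The plan is to give a probabilistic proof in the Aldous--Hoover--Kallenberg style, parallel to the argument for Theorem~\ref{Thm 1}, with the combinatorics of finite tuples replaced by the poset of equivalence classes; this mirrors the paper even though Crane and Towsner's original argument proceeds model-theoretically (via ultraproducts/Loeb measure). Write $\mathrm{Aut}(\MM)$ for the automorphism group of $\MM=(I,R_1,\ldots,R_n)$; by definition $\bX$ is $\MM$-exchangeable exactly when $\bX\d=(X_{g\alpha}:\alpha\in I)$ for every $g\in\mathrm{Aut}(\MM)$. First I would record the consequence of \textbf{ultrahomogeneity} that plays the role of ``finite permutations extend'' in the classical theory: every isomorphism between finite substructures of $\MM$ extends to a full automorphism, so the distributional invariance of $\bX$ holds under every injection of $I$ whose restriction to each finite set is a partial isomorphism. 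This is the exact analogue of the Ryll-Nardzewski--type remark following Proposition~\ref{prop 3}, and it is what makes the conditional-independence machinery available.

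Next I would set up the coordinate system. For $\alpha\in I$ the classes $[\alpha]_{R_k}$ together with $\{\alpha\}$ form the finite poset $E(\alpha)$ under inclusion, and $B(\alpha)$ is its set of anti-chains. The role of the \textbf{orderly} hypothesis is to guarantee that the global family $\{[\alpha]_{R_k}:\alpha\in I,\ k\le n\}$ is laminar (any two classes nested or disjoint), so that the posets $E(\alpha)$ glue into one tree-like structure and a shared anti-chain $b\in B(\alpha)\cap B(\beta)$ genuinely records structure common to $\alpha$ and $\beta$. To each such $b$ I would attach a symmetry $\sigma$-field $\FF_b$ of $\bX$-measurable events invariant under all automorphisms fixing setwise every class appearing in $b$, exactly as $\FF_\alpha$ was built in Section~\ref{sec: symmetry rv S}, and choose on a suitably extended space a single generator $S_b$ of $\FF_b$, shared by all indices containing $b$.

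The heart of the proof is a Hoover-type conditional independence statement for these $\sigma$-fields: for anti-chain sets $I_1,I_2$ with common refinement $I_3$ (the anti-chains built from classes common to both), the families $(\FF_b:b\in I_1)$ and $(\FF_b:b\in I_2)$ are conditionally independent given $(\FF_b:b\in I_3)$. In the separately exchangeable case this is precisely Proposition~\ref{prop: Hoover conditional independence}; for general orderly $\MM$ I would derive the analogue by the argument of Lemma~\ref{lem: symmetric variables with respect to X}, approximating $\FF_b$ from outside by $\sigma$-fields generated by entries whose indices agree on the relevant classes and are pushed to infinity elsewhere, and then invoking ultrahomogeneity to realise the required class-fixing injections. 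Granting this, I would build the representation by induction on the height of the class-poset using the coding lemma (Lemma~\ref{lem: coding lemma}) and the joining lemmas (Lemmas~\ref{lem:joining} and~\ref{lem:Identification}): at each level one introduces an independent uniform $U_b$ for every new anti-chain $b$ and writes $S_b$ as a measurable function of $U_b$ and of the $S$'s attached to strictly smaller anti-chains, the conditional independence ensuring these local codings can be chosen \emph{consistently} so that a single $U_b$ serves every index containing $b$. Finally $X_\alpha$ is $\FF_{\{\alpha\}}$-measurable, hence a measurable function of $(U_b:b\in B(\alpha))$, and collecting these into one $f$ yields \eqref{equation: CraneTowsner representation with equivalence relations}.

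The main obstacle is the consistency issue that forced the two-level induction in the proof of Proposition~\ref{prop 3}: distinct indices $\alpha,\beta$ must receive \emph{the same} randomness on their shared classes, and the representations produced level-by-level along different branches of the poset must agree on overlaps. The orderly hypothesis is exactly what tames this, since making the global poset laminar turns ``common structure'' into honest intersections, so that overlaps are themselves anti-chain sets to which the inductive hypothesis applies. The remaining delicate points---extending finite partial isomorphisms via ultrahomogeneity, and the standard Borel-space measurability needed to select the generators $S_b$ and to apply the coding lemma---are routine but must be handled with care, since they are the mechanism by which the model-theoretic hypotheses feed into the probabilistic argument.
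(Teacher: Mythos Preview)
The paper does not prove this theorem at all: it is quoted from Crane and Towsner \cite{crane2018relatively} and used as a black box in the appendix to derive Theorem~\ref{cor 2}. There is therefore no ``paper's own proof'' to compare against; the authors explicitly head the subsection as a \emph{review} of Crane and Towsner's result, and immediately after the statement add a remark noting that they have even dropped one hypothesis (the $\omega$-DAP condition) that holds automatically in their intended application.

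On the substance of your sketch: one of your structural claims is incorrect. You assert that the \textbf{orderly} hypothesis forces the global family $\{[\alpha]_{R_k}:\alpha\in I,\ k\le n\}$ to be laminar (any two classes nested or disjoint). But Definition~\ref{def: orderly} explicitly allows---indeed, its third clause is built around---equivalence relations that are \emph{orthogonal} to $R_k$ within $R_k'$ in the sense of Definition~\ref{def: orthogonal}, meaning their classes intersect nontrivially without nesting. The simplest instance is the two-dimensional Aldous--Hoover array, where row classes and column classes cross. The anti-chain combinatorics in the theorem statement are there precisely \emph{because} the poset of classes is not a tree: an anti-chain such as $\{[\alpha]_{R_1},[\alpha]_{R_2}\}$ with incomparable members indexes the uniform variable attached to their common refinement. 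So your proposed reduction of the consistency problem to ``honest intersections in a tree-like structure'' does not go through as stated, and with it the clean route you sketch to the Hoover-type conditional independence collapses. A probabilistic proof along the lines of Section~\ref{sec:proof} may well exist, but it would have to engage seriously with the orthogonality clause of Definition~\ref{def: orderly} rather than bypass it.
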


\begin{remark} The original theorem \cite{crane2018relatively} has an additional condition that $\MM$ satisfies the  so-called $\omega$-DAP condition up to the $R_k$'s. In this paper, we consider only a special case of the theorem, and in that case, this condition always holds. It is thus omitted in our presentation of the theorem. 
\end{remark}

Most of the boldfaced terms are concepts from model theory. In the rest of this subsection, we explain slightly simplified versions of their definitions. For official definitions and detailed backgrounds of these terminologies, see Crane and Towsner's papers~\cite{crane2017relative,crane2018relatively}.

A \textbf{structure} $\MM$ of type $n$ for some natural number $n$ is a tuple $(I,R_1,\ldots,R_n)$ of a set $I$ and binary relations $\{R_k\}$ on $I$. When another structure $\NN = (J,S_1,\ldots,S_n)$ of the same type satisfies $J \subseteq I$ and $S_k \subseteq R_k$ for all $k$, we say that it is a \textbf{substructure} of $\MM$. A common way of generating a substructure is to restrict $\MM$ with a subset $J_0$ of $I$:
$$
\MM|_{J_0} \defeq ({J_0},\,R_1\cap ({J_0}\times {J_0}),\,\ldots,\,R_n \cap ({J_0} \times {J_0})).
$$
An \textbf{embedding} $\tau$ from a structure $\NN = (J,S_1,\ldots,S_n)$ to a structure $\MM = (I,R_1,\ldots,R_n)$ is a function $\tau : J \to I$ such that $\tau$ is injective and satisfies
$$
\alpha\, [S_k]\, \beta \iff \tau(\alpha)\,[R_k]\,\tau(\beta)\quad
\text{for all $\alpha,\beta \in J$ and all $k \in [n]$.}
$$
Here $[n] = \{m \in \N ~:~ 1 \leq m \leq n\}$.
Note that an embedding from $\NN$ to $\MM$ implies that $\NN$ is essentially the same as $\MM|_{\tau(J)}$,
and provides a sense that $\NN$ is a substructure of $\MM$ modulo renaming of elements of $\NN$.
When the embedding is surjective and $\MM = \NN$, we call $\tau$ an \textbf{automorphism}.

Crane and Towsner used a structure $\MM = (I,R_1,\ldots,R_n)$ with a countably infinite $I$, to specify an index set for a random-variable family and also a symmetry property of that family. The index set is $I$ itself.  They say that a family of random variables $\bX = (X_\alpha : \alpha \in I)$ with this index set is \textbf{relatively exchangeable with respect to $\MM$} or \textbf{$\MM$-exchangeable} if for all finite subsets $J$ of $I$ and embeddings $\tau : \MM|_J \to \MM$,
$$
\tau(\bX) \d= \bX
$$
where $\tau(\bX) \defeq \left(X_{\tau(\alpha)} : \alpha \in I\right)$. Embeddings play the role of finite permutations on $\N$ in the standard notion of exchangeability for random sequences.

Nearly all of the remaining terminology in Theorem \ref{thm: CraneTowsner equivalence relations} describe properties on a structure $\MM = (I,R_1,\ldots,R_n)$. More specifically, they impose requirements on the $R_k$'s, and in doing so, they gauge the $\MM$-exchangeability condition. 

\begin{defi}\label{def: ultrahomogeneity} 
        The structure $\MM$ is \textbf{ultrahomogeneous} if for all finite substructures $$\NN = (J,S_1,\ldots,S_k)$$ of $\MM$ and embeddings $\tau$ from $\NN$ to $\MM$, there exists an automorphism $\upsilon$ on $I$ that extends $\tau$, i.e., $\upsilon|_J = \tau$.
\end{defi}
A representative example of an ultrahomogeneous structure is $(\Q,<)$, the set of rational numbers with the usual less-than relation, while a representative counterexample is $(\Z,<)$, the set of integers with the less-than relation. The latter is not ultrahomogeneous because the function $\tau$ mapping $2$ to $2$ and $3$ to $4$ is an embedding from $(\{2,3\},<)$ to $(\Z,<)$, but cannot be extended to the required global function $\upsilon$ on $\Z$. The lack of any integers strictly between $2$ and $3$ prevents the construction of such an $\upsilon$. The structure $(\Q,<)$ is dense, and does not suffer from this kind of problem. These examples highlight one intuition behind ultrahomogeneity: that $\MM$ does not add any further constraint nor information to that which is present already in an embeddable finite structure.

Our next task is to explain when a sequence $(R_k : k \leq n)$ of equivalence relations of the structure $\MM$ is \textbf{orderly}. Many binary relations on the underlying set $I$ of $\MM$ will appear in our explanation. We call such binary relations simply relations, without mentioning that they are on the set $I$. Also, $R_k$ refers to the $R_k$ of $\MM$. Finally, we remind the reader that $I$ is a countable set and so an equivalence relation on $I$ has only a countable number of equivalence classes.

\begin{defi}\label{def: explicit} 
A relation $R$ is \textbf{basic explicit} in $R_1,\ldots,R_m$ if $R$ has one of the following three forms: 
\begin{itemize}
        \item $R = R_k$ for some $k$; 
        \item $R = \{(\alpha,\alpha) ~:~ \alpha \in I\}$;
        \item $R = I_0 \times I$ or $R = I \times I_0$ for some subset $I_0$ of $I$ that can be defined by a first-order logic formula $\varphi$. The formula $\varphi$ here has one free variable, say $x$, and may use $n$ symbols $r_1,\ldots,r_n$ for binary relations that are interpreted as $R_1,\ldots,R_n$, in addition to the usual quantifiers and logical connectives from first-order logic. This means $I_0 = \{ \alpha ~:~ \text{$\varphi(x)$ holds when $x=\alpha$}\}$.
%
%
%
%
\end{itemize}
A relation is \textbf{explicit} in $R_1,\ldots,R_k$ if it is a Boolean combination of basic explicit relations in $R_1,\ldots,R_k$.
\end{defi}

\begin{defi}\label{def: freely contains} 
An equivalence relation $S$ \textbf{contains} an equivalence relation $R$ if 
$$
        x\,[R]\,y \implies x\,[S]\,y,
$$
or equivalently every equivalence class of $R$ is contained in one of the equivalence classes of $S$. If, in addition, every equivalence class of $S$ contains the same number (possibly countably infinite) of equivalence classes of $R$, we say that $S$ \textbf{evenly contains} $R$, and write $\#_R(S)$ for that number. The relation $S$ is said to \textbf{freely contain} $R$ if $S$ not only evenly contains $R$ but also satisfies the following condition: for all equivalence classes $D$ of $S$, partitions $\{D_1,\ldots,D_m\}$ of $D$ made out of equivalence classes $D_i$ of $R$, and permutations $\pi$ on $[m]$, there exists an automorphism $\upsilon$ on $\MM$ such that\footnote{Here $m$ may be the first countable ordinal, in which case $\pi$ is a permutation on $\mathbb{N}$.}
\begin{itemize} 
        \item $\upsilon(D_k) = D_{\pi(k)}$ for all $k\in [m]$; and 
        \item $\upsilon(D')=D'$ for all the other equivalence classes $D'$ of $R$.
\end{itemize}
\end{defi}
To gain intuition, consider the special case that the structure $\MM$ is $(\mathbb{N}^2,\,R_1,\,R_2)$
with the following equivalence relations $R_1$ and $R_2$:
\begin{equation}\label{eqn:example-R12}
        (k_1,k_2)\,[R_1]\,(k'_1,k'_2) \iff k_1 = k'_1,\qquad
        (k_1,k_2)\,[R_2]\,(k'_1,k'_2) \iff k_1 = k'_1 \wedge k_2 = k'_2.
\end{equation}
Note that $R_2$ is just the equality relation. The relation $R_1$ freely contains $R_2$. It contains $R_2$ because it is a coarser equivalence relation than $R_2$, the equality relation. This containment is even because each equivalence class of $R_1$ contains a countable number of equivalence classes of $R_2$. Checking the remaining condition of free containment is less immediate, but only slightly. Let $D$ be an equivalence class of $R$
and let $\{D_i ~:~ i \in \mathbb{N}\}$ be a partition of $D$
that consists of equivalence classes of $S$. Then, $D$ has the form 
$D = \{(k_0,k) ~:~ k \in \mathbb{N}\}$ for some fixed $k_0$, and each $D_i$ is a singleton set
of the form $\{(k_0,k_i)\}$ for some $k_i$. Given a permutation $\tau$ on $\mathbb{N}$, we may fulfill
the condition of free containment using the following automorphism $\upsilon$ on $\mathbb{N}^2$:
$$
\upsilon(k,k') = 
\begin{cases}
        (k,\tau(k')) & \mbox{if } k = k_0,
        \\
        (k,k') & \mbox{if } k \not= k_0.
\end{cases}
$$
When $k=k_0$ and so $(k,k')$ is in the equivalence class $D$, this function permutes the second component $k'$ according to $\tau$, thus meeting the first bullet point of the condition. Otherwise, $(k,k')$ is not in $D$, and the function acts as the identity, as required by the second bullet point. 

\begin{defi}\label{def: orthogonal} Let $R_1$, $R_2$ be equivalence relations that are contained in an equivalence relation $R$. Then, $R_1$ and $R_2$ are said to be \textbf{orthogonal} within $R$ if for any equivalence classes $D_1$, $D_2$, $D$ of $R_1$, $R_2$, $R$, respectively, with $D_1, D_2 \subseteq D$, we have $D_1 \cap D_2 \neq \emptyset$.
\end{defi}
\begin{defi}\label{def: orderly} 
The sequence $(R_k : k \leq n)$ of equivalence relations is \textbf{orderly} if for each $1\leq k \leq n$, there exists an equivalence relation $R_k'$ such that
\begin{itemize}
    \item $R_k'$ is explicit in $R_1,\ldots,R_{k-1}$;
    \item $R_k'$ freely contains $R_k$; and
    \item if an equivalence relation $S$ is explicit in $R_1,\ldots,R_{k-1}$ and strictly contained in $R_k'$ but it is different from $R_k$, $S$ is either orthogonal to $R_k$ within $R_k'$ or evenly contained in $R_k$ with $\#_{R_k}(S) = \infty$.
\end{itemize}
\end{defi}
A good example of an orderly sequence is $(R_1,R_2)$ made out of relations $R_i$ in \eqref{eqn:example-R12}. The required relations $R'_1$ and $R'_2$ are the complete relation $\mathbb{N}^2 \times \mathbb{N}^2$ and the relation $R_1$, respectively. We focus on $R'_2$. We have already shown that $R_1$ freely contains $R_2$. It is also explicit in $R_1$, simply because it is $R_1$. To check the third condition, consider an equivalence relation $S$ explicit in $R_1$ and strictly contained in $R'_2$. Although we do not present a detailed calculation, it is possible to show that being explicit implies that $S$ has to be one of the following three relations:
$$
        =,\qquad
        R_1,\qquad
        \N^2 \times \N^2.
$$
But only the equality relation is strictly contained in $R'_2$. Thus, $S$ should be the equality relation. That is, $S = R_1$. Our argument so far shows that no $S$ meets the assumptions in the third condition and so the condition holds vacuously.

%
%
%
%

The remaining concept is \textbf{anti-chain}. In a set $A$ with a partial order $\preceq$, an \textbf{anti-chain} is a subset $A_0$ of $A$ such that no two distinct elements of $A_0$ can be compared by $\preceq$, that is, for all $a,b \in A_0$, if $a \neq b$, then neither $a\preceq b$ nor $b \preceq a$. In Theorem~\ref{thm: CraneTowsner equivalence relations}, $A_0$ is a set of certain subsets of $I$ that are equivalence classes of some equivalence relations, and it is ordered by the subset relation.

\subsection{Proof of Theorem \ref{cor 2}}
\label{sec:DAG-exchangeability}

Let $G = (V,E)$ be the DAG in Theorem \ref{cor 2}. Set $n$ to the cardinality of $V$. The first step is to enumerate the vertices of $G$ such that the order in the enumeration respects the directed edges in $E$. We use this enumeration to build a structure $\M$ that has $\N^G$ as its underlying set and satisfies the conditions of Theorem \ref{thm: CraneTowsner equivalence relations}, especially the orderly condition. 

\begin{lem}\label{lem: relabeling} There exists an enumeration of $V$, $(v_\ell : 1 \leq \ell \leq n)$, so that 
$V_\ell \defeq \{v_1,\ldots,v_\ell\}$ is closed for every $\ell \leq n$.
\end{lem}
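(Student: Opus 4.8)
The plan is to recognize Lemma~\ref{lem: relabeling} as a form of the standard topological-sort fact: every finite poset admits a linear extension all of whose initial segments are down-sets. Since in this paper a finite DAG is the same thing as a finite poset $(V,\preceq)$, and $V_\ell$ is to be built by adding one vertex at a time, I would proceed greedily, at each stage removing a $\preceq$-minimal vertex of what remains, and then check that every prefix produced this way is downward-closed.

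Concretely, I would argue by induction on $\ell$. If $V$ is empty the statement is vacuous, so assume $V \neq \emptyset$. Since $G$ is finite and nonempty, $\preceq$ has a minimal element; take $v_1$ to be any such element. Then $V_1 = \{v_1\}$ is downward-closed, because $w \preceq v_1$ forces $w = v_1$ by minimality. For the inductive step, suppose $v_1,\dots,v_{\ell-1}$ have been chosen with $V_{\ell-1}$ downward-closed and $\ell-1 < n$. The set $G \setminus V_{\ell-1}$, equipped with the restriction of $\preceq$, is a nonempty finite poset, hence has a minimal element; let $v_\ell$ be one such. The key point is that every $w$ with $w \prec v_\ell$ already lies in $V_{\ell-1}$: otherwise $w \in G \setminus V_{\ell-1}$ and $w \prec v_\ell$ holds inside $G \setminus V_{\ell-1}$ as well, contradicting minimality of $v_\ell$ there. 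Given this, $V_\ell = V_{\ell-1} \cup \{v_\ell\}$ is downward-closed: if $w' \in V_\ell$ and $w \preceq w'$, then either $w' \in V_{\ell-1}$ and $w \in V_{\ell-1} \subseteq V_\ell$ by the inductive hypothesis, or $w' = v_\ell$, in which case $w = v_\ell \in V_\ell$ or $w \prec v_\ell$, whence $w \in V_{\ell-1} \subseteq V_\ell$ by the key point. After $n$ steps every vertex has been enumerated, and $V_1,\dots,V_n$ are all downward-closed, which is the claim.

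If one prefers the graph-theoretic language, an equivalent route is to observe that a $\preceq$-minimal vertex of $G$ is exactly a vertex with no incoming edge (a source), since any directed path into $v$ would in particular produce an incoming edge; a finite nonempty DAG always has a source, and the same induction runs verbatim with ``minimal element'' replaced by ``source of the current subgraph.'' I do not expect any genuine obstacle here: this is textbook topological sorting, and the only step that needs a moment's care is the passage from minimality of $v_\ell$ inside $G \setminus V_{\ell-1}$ to the conclusion that \emph{all} $\preceq$-predecessors of $v_\ell$ in the full $G$ are already present, which is immediate because the order on $G \setminus V_{\ell-1}$ is simply the restriction of $\preceq$ and $V_{\ell-1}$ has already been arranged to be downward-closed.
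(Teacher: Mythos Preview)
Your argument is correct and is essentially the same topological-sort induction the paper uses: repeatedly extract a $\preceq$-minimal vertex from what remains and verify that each resulting prefix is downward-closed. (Note that the paper's proof contains a slip, writing ``maximal'' where ``minimal'' is clearly intended; your version states it correctly.)
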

\begin{proof} This is a well-known simple result. A process for enumerating $V$ is called topological sort in combinatorics and computer science. For completeness, we explain the construction of the sequence $(v_\ell : 1 \leq \ell \leq n)$ in the lemma. We construct the sequence inductively. Since $V$ is finite and $G$ is acyclic, there exists a minimal vertex $v_1$. Our inductive construction starts with the sequence $(v_1)$. Assume that we have enumerated $\ell$ elements such that $V_\ell \defeq \{v_1,\ldots,v_\ell\}$ is closed. Now consider $V \setminus V_\ell$. Since $V$ is finite and partially ordered, so is $V \setminus V_\ell$ and there exists a maximal element $v' \in V \setminus V_\ell$. We set $v_{\ell+1}$ to be this $v'$. Then, by the maximality of $v'$ in $V \setminus V_\ell$, the set $\{v_1,\ldots,v_{\ell+1}\}$ is closed, as required.
\end{proof}

From now on, we write $\M=(I,R_{v_1},\ldots,R_{v_n})$, where the $v_k$ are enumerated as in Lemma \ref{lem: relabeling} and $R_v$ is defined by
$$
\alpha\,[R_v]\,\beta \iff 
\text{for all $w \preceq v$, } \alpha(w) = \beta(w).
$$

\begin{lem}\label{lem: ultrahomogeneity} $\M$ is ultrahomogeneous.
\end{lem}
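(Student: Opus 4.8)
The plan is to verify the definition of ultrahomogeneity (Definition \ref{def: ultrahomogeneity}) directly: given a finite substructure $\NN=(J,S_{v_1},\ldots,S_{v_n})$ of $\M$ with $J\subseteq\N^G$ finite, and an embedding $\tau\colon\NN\to\M$, we must extend $\tau$ to an automorphism $\upsilon$ of $\N^G$. The key observation is that, by the way the $R_v$ are defined, the relation $R_v$ records exactly the values $\alpha(w)$ for $w\preceq v$, so knowing all the $R_v$-classes of $\alpha$ is the same as knowing $\alpha$ itself (taking $v$ over all vertices recovers every coordinate). Moreover, since $\tau$ preserves each $R_v$, we have $\alpha(w)=\beta(w)$ for $w\preceq v$ iff $\tau(\alpha)(w)=\tau(\beta)(w)$ for $w\preceq v$; by downward-induction on the partial order (or equivalently by the enumeration of Lemma \ref{lem: relabeling} read coordinate by coordinate) this forces, for each vertex $w$, the existence of an injection on the finite set of values $\{\alpha(w):\alpha\in J\}$ — but crucially this injection may depend on the values $\{\alpha(u):u\prec w\}$ assigned to the parents/ancestors of $w$. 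This matches the description of $G$-automorphisms in Example \ref{ex:G-auto}(d).

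Concretely, first I would process the vertices in the order $v_1,\ldots,v_n$ guaranteed by Lemma \ref{lem: relabeling}, so that when we reach $v_\ell$ all of its ancestors lie in $V_{\ell-1}$. For a fixed tuple of ancestor-values, the constraint that $\tau$ preserves $R_{v_\ell}$ (given that it already preserves $R_{v_1},\ldots,R_{v_{\ell-1}}$) says exactly that two indices in $J$ agreeing on all ancestors of $v_\ell$ have the same $v_\ell$-value iff their images do; hence $\tau$ induces a well-defined injection from the finite set of $v_\ell$-values appearing in $J$ (restricted to that ancestor-fiber) to $\N$. Each such partial injection from a finite subset of $\N$ to $\N$ extends to a permutation $\pi_{(n_1,\ldots,n_p)}$ of $\N$, where $(n_1,\ldots,n_p)$ ranges over the finitely many ancestor-value tuples realized in $J$, and we set $\pi_{(n_1,\ldots,n_p)}$ to be the identity for all other ancestor tuples. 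Define $\upsilon(\alpha)(v_\ell)\defeq\pi_{(\alpha(w_1),\ldots,\alpha(w_p))}(\alpha(v_\ell))$ where $w_1,\ldots,w_p$ are the parents of $v_\ell$; one checks that $\upsilon$ is a bijection of $\N^G$ (it is built coordinatewise out of permutations, processed in topological order so the dependence is well-founded), that $\upsilon$ preserves every $R_v$ (hence is a $G$-automorphism in the sense of Definition \ref{def: G-auto}, equivalently an automorphism of $\M$), and that $\upsilon$ agrees with $\tau$ on $J$ by construction.

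The main obstacle — really the only place requiring care rather than bookkeeping — is the well-definedness and consistency of the partial injections across the different ancestor-fibers, and confirming that extending each finite partial injection of $\N$ independently to a permutation does not create a clash. This is where finiteness of $J$ is used essentially: only finitely many ancestor-tuples and finitely many values arise, so each partial injection is a finite partial map $\N\rightharpoonup\N$ and thus extends to a bijection of $\N$, and the countably many vertices contribute independently. I would also remark that the same argument shows the stronger \textbf{ultrahomogeneity} statement referenced in the remark following Proposition \ref{prop 3} — that any $G$-homomorphism between finite subsets of $\N^G$ extends to a $G$-automorphism — since a $G$-homomorphism between finite sets is precisely an embedding $\M|_J\to\M$ in the model-theoretic language.
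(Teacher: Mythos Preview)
Your approach is correct and essentially the same as the paper's: both build the extending automorphism coordinate by coordinate along the topological order $v_1,\ldots,v_n$ of Lemma~\ref{lem: relabeling}, at each stage extending a finite partial injection on $\N$ to a permutation. The paper packages this as an induction on $|V|$ (removing the terminal $v_m$ and invoking the inductive hypothesis on $G$ restricted to $V\setminus\{v_m\}$), while you unroll the induction and write all coordinates at once; the content is the same.

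One slip to fix: in your displayed formula you let $w_1,\ldots,w_p$ be the \emph{parents} of $v_\ell$, but throughout the rest of the argument you (correctly) work with \emph{ancestor}-fibers. The subscript must record the tuple of all strict-ancestor values $(\alpha(w):w\prec v_\ell)$, not just parent values. For example, on the chain $v_1\to v_2\to v_3$ take $J=\{(1,1,1),(2,1,1)\}$ and the embedding $\tau(1,1,1)=(1,1,2)$, $\tau(2,1,1)=(2,1,3)$; the two elements share the parent value $\alpha(v_2)=1$ but need different $v_3$-images, so a single permutation indexed by $\alpha(v_2)$ would be asked to send $1$ to both $2$ and $3$. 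With ``parents'' replaced by ``strict ancestors'' your construction goes through exactly as described.
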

\begin{proof} 
We use induction on $n$, the cardinality of the vertex set of $G$. For $n=1$, the claim is equivalent to the existence of an extension of a bijection between finite subsets of $\N$ to a permutation of $\N$. So, it is obviously true. Now assume that the claim holds if $n\leq m-1$. We will prove the claim for the case that $n = m$.

Let $\NN = (J,S_1,\ldots,S_m)$ be a substructure of $\MM$, and $\tau$ an embedding from $\NN$ to $\MM$. 
Because of the way that we constructed the enumeration $(v_\ell : 1\leq \ell \leq m)$, the last vertex $v_m$ is maximal according to the partial order induced by $G$. That is, $v_m$ is a terminal vertex. Let $G'$ be the subgraph of $G$ with the vertex set $W=\{v_1,\ldots,v_{m-1}\}$. Let
\begin{align*}
        I|_W & \defeq \N^{W},
        &
        R_{v_\ell}|_W & \defeq \{(\alpha|_W,\alpha'|_W) ~:~ (\alpha,\alpha') \in R_{v_\ell}\},
        \\
        \M|_{W} & \defeq (I|_W,R_{v_1}|_W,\ldots,R_{v_{m-1}}|_W),
        &
        \NN|_{W} & \defeq (J|_W,S_1|_W,\ldots,S_{m-1}|_W),
\end{align*}
Then, $\NN|_W$ is a finite substructure of $\M|_W$. Furthermore, there exists a function $\tau_0 : J|_W \to I_W$ such that $\tau_0(\beta)=\tau(\beta')|_W$ whenever $\beta = \beta'|_W$. In fact, the function $\tau_0$ is an embedding from $\NN|_W$ to $\M|_W$.
By induction hypothesis, $\tau_0$ can be extended to an automorphism $\upsilon_0$ on $\M|_W$.

We now extend $\upsilon_0$ to an automorphism on $\M$. Fix $\beta \in J|_W$. 
Define 
$$
        J_\beta \defeq \{\beta' \in J ~:~ \beta'|_{W}=\beta\}.
$$
Construct a permutation of $\N$, say $\pi_\beta$, so that $\pi_\beta(\beta'(v_m))=\tau(\beta')(v_m)$ for all $\beta' \in J_\beta$. This is possible because $J_\beta$ is finite. Define $\upsilon : I \to I$ as follows:
\begin{equation*}
    \upsilon(\alpha)(v_k) \defeq 
        \begin{cases} 
                \upsilon_0(\alpha|_W)(v_k)
                & \text{if }\, k \leq m-1,
                \\ 
                \alpha(v_k)
                & \text{if }\, k = m\, \text{ and }\, \alpha \notin J,
                \\ 
                \pi_{\alpha|_W}(\alpha(v_k))
                & \text{if }\, k = m\, \text{ and }\, \alpha \in J.
            \end{cases}
\end{equation*}
Then, $\upsilon$ is the desired extension of $\tau$.
\end{proof}

\begin{lem}\label{lem: orderly} The sequence $R_{v_1},\ldots,R_{v_n}$ is orderly.
\end{lem}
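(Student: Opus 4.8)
The plan is to check the three bullets of Definition~\ref{def: orderly} with an explicit choice of witness relation $R'_{v_k}$ for each $k$. Write $\downarrow v \defeq \{w \in V : w \preceq v\}$, so that $R_v$ is the equivalence relation ``agree on $\downarrow v$'', i.e. $\alpha\,[R_v]\,\beta$ iff $\res{\alpha}{\downarrow v}=\res{\beta}{\downarrow v}$, and for $A \subseteq V$ let $R_A$ denote ``agree on $A$''. Let $P_k \defeq (\downarrow v_k)\setminus\{v_k\}$ be the set of strict ancestors of $v_k$; this set is downward-closed and equals $\bigcup_{w \in \mathrm{pa}(v_k)}\downarrow w$, where $\mathrm{pa}(v_k)$ is the parent set of $v_k$, so that $R_{P_k}=\bigcap_{w\in\mathrm{pa}(v_k)}R_w$ (an empty intersection being $I\times I$). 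I would take $R'_{v_k}\defeq R_{P_k}$.

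For the explicitness bullet I would note that, by the topological-sort property of Lemma~\ref{lem: relabeling}, every strict ancestor of $v_k$ --- in particular every parent --- lies in $V_{k-1}=\{v_1,\ldots,v_{k-1}\}$, so $R'_{v_k}$ is a finite intersection of the basic explicit relations $R_{v_1},\ldots,R_{v_{k-1}}$ (or is $I\times I$) and hence is explicit in $R_{v_1},\ldots,R_{v_{k-1}}$. For free containment, since $P_k\subsetneq\downarrow v_k$ we have $R_{v_k}\subseteq R'_{v_k}$, and each $R'_{v_k}$-class $D_\gamma\defeq\{\alpha:\res{\alpha}{P_k}=\gamma\}$ splits into the countably many $R_{v_k}$-classes $\{\alpha\in D_\gamma:\alpha(v_k)=a\}$, so $R'_{v_k}$ evenly contains $R_{v_k}$ with $\#_{R_{v_k}}(R'_{v_k})=\infty$. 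The one substantial point is producing the automorphisms required by Definition~\ref{def: freely contains}: for a fixed $\gamma\colon P_k\to\N$ and a permutation $\pi$ of $\N$ I would let $\upsilon_{\gamma,\pi}$ fix $\alpha$ unless $\res{\alpha}{P_k}=\gamma$, in which case it replaces $\alpha(v_k)$ by $\pi(\alpha(v_k))$ and leaves every other coordinate alone. Verifying that $\upsilon_{\gamma,\pi}\in\mathrm{Aut}(\M)$ reduces to showing it preserves each $R_{v_\ell}$, which splits into the case $v_k\not\preceq v_\ell$ (then $\upsilon_{\gamma,\pi}$ changes no coordinate in $\downarrow v_\ell$) and the case $v_k\preceq v_\ell$ (then $P_k\subseteq\downarrow v_\ell$, so agreement on $\downarrow v_\ell$ forces agreement on $P_k$, hence $\upsilon_{\gamma,\pi}$ acts the same way on both arguments and agreement is preserved because $\pi$ is a bijection). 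This $\upsilon_{\gamma,\pi}$ permutes the $R_{v_k}$-classes inside $D_\gamma$ by $\pi$ and fixes every other $R_{v_k}$-class, which is exactly what Definition~\ref{def: freely contains} demands.

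The third bullet is where the real work lies. Given an equivalence relation $S$ that is explicit in $R_{v_1},\ldots,R_{v_{k-1}}$, strictly contained in $R'_{v_k}$, and $\ne R_{v_k}$, I would use that ``explicit'' makes $S$ invariant under \emph{every} permutation of $I$ preserving $R_{v_1},\ldots,R_{v_{k-1}}$ --- a group strictly larger than $\mathrm{Aut}(\M)$, since $R_{v_1},\ldots,R_{v_{k-1}}$ all ignore the coordinate $v_k$. Two families of such permutations suffice: the $\upsilon_{\gamma,\pi}$ above, and the bijections fixing all $V_{k-1}$-coordinates and permuting the remaining coordinates by an arbitrary bijection depending only on the $V_{k-1}$-coordinates. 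Invariance under the $\upsilon_{\gamma,\pi}$ forces, for $\alpha,\beta$ in a common $R'_{v_k}$-class, the truth value of $\alpha\,[S]\,\beta$ to depend on $(\alpha(v_k),\beta(v_k))$ only through whether $\alpha(v_k)=\beta(v_k)$; invariance under the second family, together with transitivity of $\mathrm{Aut}(\M)$ on $R'_{v_k}$-classes (Lemma~\ref{lem: ultrahomogeneity}) and the fact that $S$ is an equivalence relation contained in $R'_{v_k}$, should pin $S$ down to $R_C$ for a downward-closed $C$ that is either $\subseteq V_{k-1}$ or all of $V$. I would then finish by cases: if $C\subseteq V_{k-1}$ then $v_k\notin C$, so inside a fixed $R'_{v_k}$-class the constraint defining an $S$-class and the constraint defining an $R_{v_k}$-class use disjoint coordinates and are jointly satisfiable, giving orthogonality of $S$ and $R_{v_k}$ within $R'_{v_k}$; if $C=V$ then $S=\Delta$, and $S\ne R_{v_k}$ forces $\downarrow v_k\subsetneq V$, so every $R_{v_k}$-class is infinite and contains $\infty$-many singletons, i.e. $\Delta$ is evenly contained in $R_{v_k}$ with $\#_{R_{v_k}}(\Delta)=\infty$.

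The main obstacle I anticipate is precisely the classification of explicit equivalence relations used in the third bullet: ``explicit'' is a syntactic (Boolean-combination) condition, whereas what is needed is a description of the possible \emph{shapes} of such an $S$ as an equivalence relation, so one must pass to the semantic side, extract enough symmetry (the two permutation families above), and then rule out ``intermediate'' possibilities by a transitivity/homogeneity argument. By contrast the free-containment bullet is a concrete automorphism construction and the explicitness bullet is just bookkeeping with the topological order; the boundary cases --- $v_k$ a source (so $R'_{v_k}=I\times I$) and $v_k$ a maximum (so $R_{v_k}=\Delta$ and there is nothing to check) --- drop out of the same analysis.
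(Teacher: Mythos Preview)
Your proposal is correct and follows essentially the same route as the paper: you choose the same witness $R'_{v_k}=\bigcap_{w\prec v_k,\,w\neq v_k}R_w$, you reduce the third bullet to a classification of the explicit equivalence relations (you argue semantically via invariance under bijections preserving $R_{v_1},\ldots,R_{v_{k-1}}$, the paper via a sketched DNF analysis), and you verify orthogonality by the same disjoint-coordinates observation. Your explicit handling of the case $S=\Delta$ through the even-containment clause is in fact a small improvement on the paper's writeup, which asserts that every such $S$ is an intersection $\bigcap_{i\in I}R_{v_i}$ with $I\subseteq\{1,\ldots,k-1\}$ and thereby silently omits $\Delta$ (harmlessly, since $\Delta$ is disposed of by the ``evenly contained with $\#=\infty$'' alternative whenever $\downarrow v_k\subsetneq V$, and equals $R_{v_k}$ otherwise).
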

\begin{proof} 
For each $k \leq n$, define 
$$
        R_k' \defeq \bigcap \big\{ R_{v_j} ~:~ j<n,\ \, v_j \preceq v_k \,\mbox{ and }\,v_j \neq v_k \big\}.
$$
Clearly, $R_k'$ is explicit in $R_{v_1},\ldots,R_{v_{k-1}}$, and $R_k'$ freely contains $R_{v_k}$. Now consider $S$ such that 
\begin{enumerate}
        \item $S$ is an equivalence relation explicit in $R_{v_1},\ldots,R_{v_{k-1}}$;
        \item $S$ is strictly contained in $R_k'$; and
        \item it is not the case that $S$ is evenly contained in $R_k$ with $\#_{R_k}(S) = \infty$.
\end{enumerate}
A more careful analysis of the equivalence relations explicit in $R_{v_1},\ldots,R_{v_{k-1}}$ for this particular model reveals that they are exactly the equivalence relations that are of the form 
$\bigcap_{i\in I} R_{v_i}$ for ${I\subseteq \{1\dots k-1\}}$. Firstly, the third clause in the notion of basic explicit is redundant on this occasion, for $I_0$ there must be either empty or $I$: these are the only two definable sets. 
Secondly, in this circumstance, if a Boolean combination of relations in $R_{v_1},\ldots,R_{v_{k-1}}$ is an equivalence relation then it must actually be an intersection of such relations; we showed this by considering the disjunctive normal forms that a transitive relation may have in this particular model.

From this we can conclude that $S$ is an intersection of $R_k'$ with some $R_{v_j}$'s where $v_j$ is not an ancestor of $v_k$. Since $\{v_1,\ldots,v_{k-1}\}$ is closed, $v_k$ is not an ancestor of $v_j$ either. Thus, $v_j$ and $v_k$ are incomparable. 
We use this to show that $S$ is orthogonal to $R_{v_k}$ in $R_k'$.
To this end, consider equivalence classes $D, D_1, D_2$ of $R_k'$, $S$, $R_{v_k}$, respectively, with $D_1, D_2 \subseteq D$,
Pick $\alpha_1\in D_1$, $\alpha_2\in D_2$, so that $\alpha_1\mathrel{[R_k']}\alpha_2$,
and let $\beta\in D$ be given by 
$$\beta(v)=\begin{cases}\alpha_1(v)=\alpha_2(v)&\text{if $v\prec v_k$, $v\neq v_k$ };\\
\alpha_1(v)&\text{if $v=v_j$ where $R_{v_j}\subseteq S$ and $v\not\preceq v_k$}\\
\alpha_2(v)&\text{if $v=v_k$}\\
\text{anything}&\text{otherwise}\end{cases}$$
so that $\alpha_1\mathrel {[S]} \beta$ and $\beta\mathrel {[R_{v_k}]} \alpha_2$,
i.e.~$\beta\in D_1\cap D_2$. 
\end{proof}


\begin{proof}[Proof of Theorem \ref{cor 2}] The previous lemmas imply that the conditions of Theorem~\ref{thm: CraneTowsner equivalence relations} hold. Thus, we can apply the theorem, and get the following representation of $\bX$: 
\begin{equation}
        \label{eqn:CT-representation:1}
        \bX = \Big(X_\alpha : \alpha \in \N^G\Big) \d= \Big(f\big(U_b : b \in B(\alpha)\big) : \alpha \in \N^G\Big)
\end{equation}
where $B(\alpha)$ is the set of all anti-chains in $E(\alpha) \defeq \{[\alpha]_{R_k} ~:~ k\leq n\}\cup\{\{\alpha\}\}$, the collection of all equivalence classes $[\alpha]_{R_k}$ of $i$ with respect to the $R_k$'s, partially-ordered by set inclusion, and $(U_b : b \in \bigcup_{\alpha \in \N^G} B(\alpha))$ is a collection of independent $[0,1]$-uniform random variables.

The rest of the proof is about translating the representation in \eqref{eqn:CT-representation:1}
to the claimed representation of Theorem \ref{cor 2}. A crucial part of
this translation is the following function $\varphi$ from $B \defeq \bigcup \{ B(\alpha)  ~:~ \alpha \in \N^G\}$ to $J \defeq \{\alpha|_C ~:~ C \in \AA_G \ \mbox{and}\ \alpha \in \N^G\}$:
$$
        \varphi(b) 
        \defeq 
        \begin{cases}
                \alpha 
                & \text{if $b \in B(\alpha)$ for some $\alpha$ and $b = \{\{\alpha\}\}$}
                \\
                \alpha|_{\{w ~:~ w \preceq v_i \text{ for some $i$}\}} 
                & \text{if $b \in B(\alpha)$ for some/any $\alpha$ and $b = \{[\alpha]_{R_{v_1}},\ldots,[\alpha]_{R_{v_k}}\}$}
        \end{cases}
$$
The function $\varphi$ is well-defined. In the first case of the above definition, there is only one $\alpha$. In the second case, there may be multiple choices of $\alpha$, but they all give rise to the same element in $J$. Furthermore, $\varphi$ satisfies three important properties. Firstly, it is surjective, because for any $C \in \AA_G$ and $\alpha \in \N^G$, we have
$$
        \varphi(\{[\alpha]_{R_v} ~:~ v \text{ is $\preceq$-maximal in $C$}\}) = \alpha|_C.
$$
Secondly, $\varphi$ can be restricted to a surjective function from $B(\alpha)$
to $\{\alpha|_C ~:~ C \in \AA_G\}$ for all $\alpha \in \N^G$.
Finally, it is almost injective in the following sense: when $M$ is the set of $\preceq$-maximal vertices
of~$G$,
$$
        \varphi(b) = \varphi(b') \implies
        \Big(b = b' \, \text{ or }\,
        \big\{b,b'\big\} = \big\{\{\{\alpha\}\},\,\{[\alpha]_{R_v} ~:~ v \in M\}\big\}\,
        \text{ for some $\alpha \in \N^G$}\Big)
$$

Let $g$ be a measurable function from $[0,1]$ to $[0,1] \times [0,1]$ such that for any $[0,1]$-uniform $U$, 
$$
        g(U) \d= (U_1,U_2)
$$
for some independent $[0,1]$-uniform random variables $U_1$ and $U_2$. Pick a collection of independent $[0,1]$-uniform
random variables 
$$
        \bU' \defeq (U'_{\alpha|_C} : \text{$C \in \AA_G$ and $\alpha \in \N^G$}).
$$
Recall that $M$ is the set of $\preceq$-maximal vertices. Let 
$$
        B_0 = \{b \in B ~:~ b = \{\{\alpha\}\}\, \text{ or }\, b = \{[\alpha]_{R_v} : v \in M\} \text{ for some }
\alpha \in \N^G\}.
$$
Then,
\begin{multline*}
        \Big(\big(U_b ~:~ b \in B \setminus B_0\big),\,
        \big(U_b, U_{b'} ~:~ b = \{\{\alpha\}\} \,\text{ and }\, b' = \{[\alpha]_{R_v} : v \in M\} \text{ for some }
        \alpha \in \N^G\big)\Big)
        \\
        {} \d= \Big(\big(U'_{\varphi(b)} ~:~ b \in B \setminus B_0\big),\,
        \big(g(U'_\alpha) ~:~ \alpha \in \N^G\big)\Big)
\end{multline*}
This and the second property of $\varphi$ mentioned above imply the existence of a measurable function $h$ such that
$$
        \Big(\big(U_b : b \in B(\alpha)\big) : \alpha \in \N^G\Big)
        \d=
        \Big(h\big(U'_{\alpha|_C} : C \in \AA_G\big) : \alpha \in \N^G\Big),
$$
which implies 
$$
        \left(X_\alpha : \alpha \in \indices G\right) \ed \Big((f \circ h)\big(U'_{\alpha|_C} : C \in \AA_G\big) : \alpha \in \indices G\Big),
$$
as desired.
\end{proof}

\section{Supplementary results}\label{App: supplementary}
We will sometimes write $(\xi_a)_{a\in I}$ to mean a family of random variables,
and also refer to such a family as an {\it array}. Also, we will use the following notation for conditional distribution properties.
\begin{itemize}
    \item $\xi \underset{\mathcal{F}}{\independent} \eta \qquad\ \ \  (\xi \text{ and }\eta\text{ are conditionally independent given }\mathcal{F}.)$
    \item $\underset{\mathcal{F}}{\independent}(\xi_a)_{a\in I}  \qquad (\text{the family }(\xi_a)_{a\in I}\text{ is conditionally independent given }\FF.)$
\end{itemize}

The first lemma is a standard result from probability theory whose proof we omit.

\begin{lem}\label{lem:Identification} Let $(T_a, V_a)_{a \in I}$ be a multi-indexed family of random variables, and let $\mathcal{F}$ be a $\sigma$-field. Assume the following hold:
\begin{itemize}
    \item $\P[T_a \in \cdot | \FF]= \P[V_a \in \cdot | \FF]$ almost surely
    \item $\underset{\mathcal{F}}{\independent}(T_a)_{a\in I} $
    \item $\underset{\mathcal{F}}{\independent}(V_a)_{a\in I} $
\end{itemize}
Then, $\P[(T_a)_a \in \cdot | \FF] = \P[(V_a)_a \in \cdot | \FF]$ almost surely, and consequently, $(T_a)_a \buildrel d \over = (V_a)_a$.
\end{lem}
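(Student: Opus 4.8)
The plan is to reduce everything, via a monotone-class (Dynkin $\pi$-$\lambda$) argument, to checking that the conditional joint distributions of $(T_a)_{a\in I}$ and $(V_a)_{a\in I}$ assign the same mass to each finite-dimensional cylinder set, and then to use conditional independence to factor such masses into products of conditional marginals, which match by hypothesis. The unconditional statement $(T_a)_a\ed(V_a)_a$ then follows by taking expectations.

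First I would treat a single cylinder. Fix indices $a_1,\dots,a_n\in I$ and measurable sets $B_1,\dots,B_n$ in the respective coordinate spaces, and set $C=\{x : x_{a_i}\in B_i \text{ for } i=1,\dots,n\}$. Since the family $(T_a)_{a\in I}$ is conditionally independent given $\FF$, so is every finite subfamily, whence
$$
\P\big[(T_a)_{a\in I}\in C \,\big|\, \FF\big]=\prod_{i=1}^{n}\P[T_{a_i}\in B_i\mid\FF]\quad\text{a.s.}
$$
The same identity holds with $V$ in place of $T$, and by assumption $\P[T_{a_i}\in B_i\mid\FF]=\P[V_{a_i}\in B_i\mid\FF]$ a.s. for each $i$. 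Multiplying these finitely many almost-sure equalities gives
$$
\P\big[(T_a)_{a\in I}\in C \,\big|\, \FF\big]=\P\big[(V_a)_{a\in I}\in C \,\big|\, \FF\big]\quad\text{a.s.}
$$
so the two conditional distributions agree almost surely on every cylinder $C$.

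To upgrade ``on each cylinder'' to ``on the whole product $\sigma$-field'', I would use regular conditional distributions, which exist because the coordinate spaces are Borel (as throughout this paper). When $I$ is countable, the cylinders built from fixed countable generating $\pi$-systems of the coordinate $\sigma$-fields form a countable $\pi$-system generating the product $\sigma$-field; discarding the single null set on which any of these countably many equalities fails, the two regular conditional distributions agree on this $\pi$-system, hence on the full product $\sigma$-field by Dynkin's lemma, proving $\P[(T_a)_a\in\cdot\mid\FF]=\P[(V_a)_a\in\cdot\mid\FF]$ a.s. When $I$ is uncountable, any fixed measurable subset $C$ of the product space depends on only countably many coordinates, so applying the countable case to that subfamily yields $\P[(T_a)_a\in C\mid\FF]=\P[(V_a)_a\in C\mid\FF]$ a.s. for every measurable $C$. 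In either case, taking expectations gives $\P[(T_a)_a\in C]=\P[(V_a)_a\in C]$ for all $C$, i.e. $(T_a)_a\ed(V_a)_a$.

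The only real subtlety — the step to be careful about — is the bookkeeping of null sets: both the factorization identity and the equality of conditional marginals hold only almost surely, so one must gather the exceptional events into a single null set before running the $\pi$-$\lambda$ argument pointwise in $\omega$. This is precisely why one restricts to a \emph{countable} generating $\pi$-system (equivalently, to countably many coordinates) and invokes the Borel hypothesis to guarantee that regular conditional distributions exist; everything else is routine.
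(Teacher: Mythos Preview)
Your argument is correct and is exactly the standard proof of this fact: factor the conditional joint law over cylinders via conditional independence, match the factors using the assumed equality of conditional marginals, and then extend from a countable generating $\pi$-system to the full product $\sigma$-field via Dynkin's lemma, using regular conditional distributions (available since the spaces are Borel) to consolidate the null sets. The paper itself does not give a proof of this lemma; it explicitly states that it is a standard result from probability theory and omits the proof, so there is nothing to compare against beyond noting that your write-up supplies precisely the routine verification the authors had in mind.
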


The next lemma, which is a simple application of the previous result, is used to synchronize representations using different functions. 
\begin{lem}\label{lem:joining} Let $\xi_0, (\xi_a)_{a \in I}$ be random variables such that $\xi_0 \d= \xi_a$, and let $\bm{\zeta}=(\zeta_a)_{a \in I}$ be a family of independent random variables, which are also independent from $\xi_0, (\xi_a)_{a\in I}$. Let $(\eta_a)_{a\in I}$ be random variables such that for some Borel measurable functions $\phi_a$, the following hold:
\begin{itemize}
    \item $(\xi_0,\eta_a) \buildrel d \over = (\xi_a,\phi_a(\xi_a,\zeta_a))$ for each $a \in I$
    \item $\underset{\xi_0}{\independent}(\eta_a)_{a\in I}$
\end{itemize}
Then, $(\xi_0,\eta_a)_{a\in I} \buildrel d \over = (\xi_0,\phi_a(\xi_0,\zeta_a))_{a\in I}$. In particular, we have $(\xi_0,\eta_a)_{a\in I} \buildrel d \over = (\xi',\phi_a(\xi',\zeta_a))_{a\in I}$ for  any $\xi'\d=\xi_0$ independent from $\bm\zeta$.
\end{lem}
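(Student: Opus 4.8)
The plan is to derive the lemma from the identification lemma, Lemma~\ref{lem:Identification}, applied with conditioning $\sigma$-field $\FF=\sigma(\xi_0)$, with the arrays $T_a\defeq\eta_a$ and $V_a\defeq\phi_a(\xi_0,\zeta_a)$. Three things must be checked. The conditional independence $\underset{\xi_0}{\independent}(\eta_a)_{a\in I}$ is exactly the second bullet of the hypothesis. Next, $\underset{\xi_0}{\independent}\big(\phi_a(\xi_0,\zeta_a)\big)_{a\in I}$ holds because the $\zeta_a$ are mutually independent and jointly independent of $\xi_0$, so conditioning on $\xi_0$ leaves them independent, and each $\phi_a(\xi_0,\zeta_a)$ is a measurable function of $\xi_0$ and the single variable $\zeta_a$. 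The crux is the remaining hypothesis: $\P[\eta_a\in\cdot\mid\xi_0]=\P[\phi_a(\xi_0,\zeta_a)\in\cdot\mid\xi_0]$ almost surely, for each $a\in I$.

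For this I would argue via regular conditional distributions (all spaces involved being Borel, so regular versions exist). From the first bullet, $(\xi_0,\eta_a)\d=(\xi_a,\phi_a(\xi_a,\zeta_a))$, hence the conditional law of $\eta_a$ given $\xi_0=x$ coincides, for a.e.\ $x$ under the common law of $\xi_0$ and $\xi_a$, with the conditional law of $\phi_a(\xi_a,\zeta_a)$ given $\xi_a=x$. Since $\zeta_a$ is independent of $\xi_a$, this last conditional law is just the law of $\phi_a(x,\zeta_a)$; and since $\zeta_a$ is also independent of $\xi_0$, the law of $\phi_a(x,\zeta_a)$ equals the conditional law of $\phi_a(\xi_0,\zeta_a)$ given $\xi_0=x$. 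Chaining these identities gives the required almost-sure equality of conditional laws.

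With all three hypotheses in hand, Lemma~\ref{lem:Identification} yields $\P[(\eta_a)_a\in\cdot\mid\xi_0]=\P[(\phi_a(\xi_0,\zeta_a))_a\in\cdot\mid\xi_0]$ almost surely; retaining $\xi_0$ itself then gives $(\xi_0,(\eta_a)_a)\d=(\xi_0,(\phi_a(\xi_0,\zeta_a))_a)$, which is the first assertion. For the ``in particular'' clause, given any $\xi'\d=\xi_0$ independent of $\bm\zeta$, the blocks $(\xi',\bm\zeta)$ and $(\xi_0,\bm\zeta)$ have the same joint law (each block has a fixed marginal law and the two blocks are independent in both cases), so applying the common measurable map $(x,z)\mapsto\big(x,(\phi_a(x,z_a))_a\big)$ to each gives $(\xi',(\phi_a(\xi',\zeta_a))_a)\d=(\xi_0,(\phi_a(\xi_0,\zeta_a))_a)$, and combining with the first assertion completes the proof.

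I expect the only genuine subtlety to be the third step, the manipulation of regular conditional distributions: one must ensure the various almost-everywhere exceptional sets are matched up consistently across the three conditional laws (using $\xi_0\d=\xi_a$ to transport them) and that the Borel hypotheses on the state spaces are actually invoked so that regular conditional distributions exist. Everything else is routine bookkeeping.
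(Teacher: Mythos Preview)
Your proof is correct and follows essentially the same route as the paper: apply Lemma~\ref{lem:Identification} with $\FF=\sigma(\xi_0)$. The only stylistic difference is that the paper first observes $(\xi_0,\zeta_a)\d=(\xi_a,\zeta_a)$ (since $\xi_0\d=\xi_a$ and $\zeta_a$ is independent of both), which lets one replace $\xi_a$ by $\xi_0$ in the first bullet and obtain $(\xi_0,\eta_a)\d=(\xi_0,\phi_a(\xi_0,\zeta_a))$ directly; this yields the equality of conditional laws given $\xi_0$ without invoking regular conditional distributions, but the content is the same as your argument.
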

\begin{proof} Since $(\xi_0, \zeta_a) \d= (\xi_a, \zeta_a)$, we can replace $\xi_a$ by $\xi_0$ in the first bullet. \\

Let $T_a=(\xi_0, \eta_a)$, $V_a=(\xi_0, \phi_a(\xi_0, \eta_a))$, $\FF=\sigma(\xi_0)$. Then, $(T_a,V_a)_a$ and $\FF$ satisfy the conditions of Lemma \ref{lem:Identification}. The desired result immediately follows.
\end{proof}

The following coding lemma can be found in \cite[Lemma 7.6]{kallenberg2006probabilistic}:
\begin{lem}\label{lem: coding lemma} Let $(\bm\xi,\bm\eta)=((\xi_a,\eta_a):a \in A)$ be an array with any multi-index set $A$. Assume that
\begin{itemize}
    \item $(\xi_a,\eta_a) \d= (\xi_b,\eta_b)\qquad$ for all $a,b \in A$.
    \item $\xi_a \underset{\eta_a}{\independent} (\xi_b)_{b \neq a}, \bm\eta\qquad$ for all $a\in A$.
\end{itemize}
Then, at the cost of changing the probability space, there exist a Borel function $f$ and an i.i.d. array of uniform random variables $\bm\zeta=(\zeta_a: a \in A)$ such that $\bm\zeta \independent \bm\eta$ and $\xi_a=f(\eta_a,\zeta_a)$ almost surely for all $a\in A$.
\end{lem}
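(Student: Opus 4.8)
This is a standard ``coding'' lemma over Borel spaces --- indeed it is quoted from \cite[Lemma~7.6]{kallenberg2006probabilistic}, so one legitimate route is simply to cite that. To prove it from scratch, the plan is as follows. First, since the pairs $(\xi_a,\eta_a)$ all share one law $\mu$ on a product of Borel spaces, disintegrate $\mu(dx,dy)=\nu(y,dx)\,\mu_\eta(dy)$, with $\mu_\eta$ the common law of $\eta_a$ and $\nu$ a probability kernel that does not depend on $a$. Next, extract from the second hypothesis the ``global'' conditional structure: conditionally on the entire array $\bm\eta$, the family $(\xi_a)_{a\in A}$ is mutually independent with $\xi_a$ having conditional law $\nu(\eta_a,\cdot)$. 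Indeed, $\xi_a\independent\{(\xi_b)_{b\neq a},\bm\eta\}$ given $\eta_a$ yields, after conditioning down to $\sigma(\bm\eta)$ (tower property, using $\sigma(\eta_a)\subseteq\sigma(\bm\eta)$), both that $\xi_a\independent(\xi_b)_{b\neq a}$ given $\bm\eta$ for every $a$ --- hence mutual conditional independence, for countable $A$ and then for general $A$ by consistency --- and that the conditional law of $\xi_a$ given $\bm\eta$ is $\nu(\eta_a,\cdot)$.

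Second, invoke the standard functional-representation (coding) lemma for Borel spaces to obtain two Borel maps $f(y,u)$ and $g(x,y,u)$ with the following properties: if $U\sim U(0,1)$ is independent of $\eta\sim\mu_\eta$, then $f(\eta,U)$ has regular conditional law $\nu(\eta,\cdot)$ given $\eta$; and, conversely, if $(\xi,\eta)\sim\mu$ and $\theta\sim U(0,1)$ is independent of $(\xi,\eta)$, then $g(\xi,\eta,\theta)\sim U(0,1)$ and $f\big(\eta,g(\xi,\eta,\theta)\big)=\xi$ almost surely. Now enlarge the probability space to carry an i.i.d.\ family $(\theta_a:a\in A)$ of $U(0,1)$ variables that is independent of $(\bm\xi,\bm\eta)$, and set $\zeta_a\defeq g(\xi_a,\eta_a,\theta_a)$. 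By construction $f(\eta_a,\zeta_a)=\xi_a$ almost surely for every $a$, so it remains only to check that $\bm\zeta=(\zeta_a)$ is an i.i.d.\ uniform array independent of $\bm\eta$.

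For that last point, condition on $\bm\eta$: by the first paragraph the $\xi_a$'s are then independent with $\xi_a\sim\nu(\eta_a,\cdot)$, while the $\theta_a$'s stay i.i.d.\ $U(0,1)$ and independent of everything, so the $\zeta_a=g(\xi_a,\eta_a,\theta_a)$ are conditionally independent given $\bm\eta$, each uniform on $[0,1]$ by the defining property of $g$. Hence the conditional law of $\bm\zeta$ given $\bm\eta$ is a fixed product of uniforms; this gives at once $\bm\zeta\independent\bm\eta$ and that $\bm\zeta$ is i.i.d.\ uniform. (Formally this last deduction is Lemma~\ref{lem:Identification} applied with $\FF=\sigma(\bm\eta)$, comparing $\bm\zeta$ with an auxiliary i.i.d.\ uniform array built on the enlarged space.) The only genuinely delicate step is the conditional-independence bookkeeping --- passing from the one-index-at-a-time hypothesis to the statement that, given all of $\bm\eta$, everything factors through the kernels $\nu(\eta_a,\cdot)$, and then back to the global independence of the constructed randomizers; the rest is the routine single-variable coding machinery over Borel spaces.
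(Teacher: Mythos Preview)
The paper does not actually prove this lemma: it simply quotes it as \cite[Lemma~7.6]{kallenberg2006probabilistic} and moves on. Your opening remark already acknowledges this, and the sketch you then give is essentially the standard Kallenberg argument (disintegrate the common law, upgrade the one-index conditional-independence hypothesis to full conditional independence given $\bm\eta$ via the chain rule $\xi_a\independent((\xi_b)_{b\neq a},\bm\eta)\mid\eta_a \Rightarrow \xi_a\independent(\xi_b)_{b\neq a}\mid\bm\eta$, then apply the single-pair Borel coding/decoding maps $f,g$ coordinatewise with auxiliary i.i.d.\ uniforms). The reasoning is sound; the only places one would want to expand in a full write-up are (i) the explicit construction of $g$ with $f(\eta,g(\xi,\eta,\theta))=\xi$ a.s.\ and $g(\xi,\eta,\theta)\sim U(0,1)$, which goes through the conditional quantile function after a Borel isomorphism, and (ii) the passage from ``each $\xi_a$ is conditionally independent of the rest'' to mutual conditional independence, which is elementary but worth one line. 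Since the paper's own treatment is just a citation, your sketch is strictly more informative and there is nothing to compare against beyond noting that you are reproducing Kallenberg's proof.
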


\bibliographystyle{alpha}
\bibliography{bib}

\newcommand{\etalchar}[1]{$^{#1}$}
\begin{thebibliography}{TvdMYW16}

\bibitem[AAF{\etalchar{+}}]{ackermanetal2018graphons}
Nathanael~Leedom Ackerman, Jeremy Avigad, Cameron~E. Freer, Daniel~M. Roy, and
  Jason~M. Rute.
\newblock On the computability of graphons.
\newblock arxiv:1802.09598.

\bibitem[Ack15]{ackerman2015}
Nathanael Ackerman.
\newblock Representations of {A}ut({M})-invariant measures: Part {I}.
\newblock arXiv:1509.06170, 2015.

\bibitem[AFP16]{ackermanetal-forum-2016}
Nathanael Ackerman, Cameron Freer, and Rehana Patel.
\newblock Invariant measures concentrated on countable structures.
\newblock {\em Forum of Mathematics, Sigma}, 4, 2016.

\bibitem[Ald81]{aldous1981representations}
David~J Aldous.
\newblock Representations for partially exchangeable arrays of random
  variables.
\newblock {\em Journal of Multivariate Analysis}, 11(4):581--598, 1981.

\bibitem[Ald85]{aldous1985exchangeability}
David~J Aldous.
\newblock Exchangeability and related topics.
\newblock In {\em {\'E}cole d'{\'E}t{\'e} de Probabilit{\'e}s de Saint-Flour
  XIII 1983}, pages 1--198. Springer, 1985.

\bibitem[Ald10]{aldous2009more}
David~J Aldous.
\newblock More uses of exchangeability: representations of complex random
  structures.
\newblock In {\em Probability and mathematical genetics: papers in honour of
  Sir John Kingman}, 2010.

\bibitem[AP14]{austin:panchenko:2014}
Tim Austin and Dmitry Panchenko.
\newblock A hierarchical version of the de {F}inetti and {A}ldous-{H}oover
  representations.
\newblock {\em Probability Theory and Related Fields}, 159(3-4):809--823, 2014.

\bibitem[Aus08]{austin2008exchangeable}
Tim Austin.
\newblock On exchangeable random variables and the statistics of large graphs
  and hypergraphs.
\newblock {\em Probability Surveys}, 5:80--145, 2008.

\bibitem[Aus12]{austin2012exchangeable}
Tim Austin.
\newblock Exchangeable random arrays.
\newblock In {\em Notes for IAS workshop}, 2012.

\bibitem[BC12]{bordenave2012around}
Charles Bordenave and Djalil Chafa{\"\i}.
\newblock Around the circular law.
\newblock {\em Probability Surveys}, 9, 2012.

\bibitem[BCC11]{bordenave2011nonHermitian}
Charles Bordenave, Pietro Caputo, and Djalil Chafa{\"\i}.
\newblock {Spectrum of non-Hermitian heavy tailed random matrices}.
\newblock {\em Communications in Mathematical Physics}, 307(2):513--560, 2011.

\bibitem[BR96]{bratteli1996operator}
Ola Bratteli and Derek~W Robinson.
\newblock {\em {Operator Algebras and Quantum Statistical Mechanics. Vol. 2:
  Equilibrium States Models in Quantum Statistical Mechanics}}.
\newblock Springer-Verlag, 1996.

\bibitem[CCB16]{caicampbellbroderick2016edge}
D~Cai, T~Campbell, and T~Broderick.
\newblock Edge-exchangeable graphs and sparsity.
\newblock In {\em Proc.~NeurIPS 2016}, pages 4249--4257, 2016.

\bibitem[CD18]{crane2018edge}
Harry Crane and Walter Dempsey.
\newblock Edge exchangeable models for interaction networks.
\newblock {\em Journal of the American Statistical Association},
  113(523):1311--1326, 2018.

\bibitem[CF17]{caron2017sparse}
Fran{\c{c}}ois Caron and Emily~B Fox.
\newblock Sparse graphs using exchangeable random measures.
\newblock {\em Journal of the Royal Statistical Society: Series B (Statistical
  Methodology)}, 79(5):1295--1366, 2017.

\bibitem[CGZ15]{glz-graphon}
Yu~Lu Chao~Gao and Harrison~H Zhou.
\newblock Rate-optimal graphon estimation.
\newblock {\em Annals of Statistics}, 43(6):2625--2652, 2015.

\bibitem[CT17]{crane2017relative}
Harry Crane and Henry Towsner.
\newblock Relative exchangeability with equivalence relations.
\newblock {\em Archive for Mathematical Logic}, pages 1--24, 2017.

\bibitem[CT18]{crane2018relatively}
Harry Crane and Henry Towsner.
\newblock Relatively exchangeable structures.
\newblock {\em The Journal of Symbolic Logic}, 83(2):416--442, 2018.

\bibitem[dF75]{de1975theory}
Bruno de~Finetti.
\newblock {\em Theory of probability (English translation), vols 1 and 2}.
\newblock Wiley, New York, 1975.

\bibitem[DJ08]{diaconis2008graph}
Persi Diaconis and Svante Janson.
\newblock Graph limits and exchangeable random graphs.
\newblock {\em Rendiconti di Matematica}, 28:33--61, 2008.

\bibitem[FR12]{freer-roy2012apal}
Cameron~E. Freer and Daniel~M. Roy.
\newblock Computable de {F}inetti measures.
\newblock {\em Ann. Pure Appl. Logic}, 163(5):530--546, 2012.

\bibitem[GMR{\etalchar{+}}08]{church2008}
Noah Goodman, Vikash Mansinghka, Daniel Roy, Keith Bonawitz, and Joshua
  Tenenbaum.
\newblock Church: a language for generative models.
\newblock In {\em Proc.~UAI 2008}, 2008.

\bibitem[GvdV17]{ghosal2017fundamentals}
Subhashis Ghosal and Aad van~der Vaart.
\newblock {\em {Fundamentals of nonparametric Bayesian inference}}, volume~44.
\newblock Cambridge University Press, 2017.

\bibitem[Hoo79]{hoover1979relations}
Douglas~N Hoover.
\newblock Relations on probability spaces and arrays of random variables.
\newblock {\em Preprint, Institute for Advanced Study, Princeton, NJ}, 2, 1979.

\bibitem[Kal02]{kallenberg2002foundations}
Olav Kallenberg.
\newblock {\em Foundations of modern probability}.
\newblock Springer, 2002.

\bibitem[Kal05]{kallenberg2006probabilistic}
Olav Kallenberg.
\newblock {\em Probabilistic Symmetries and Invariance Principles}.
\newblock Springer, 2005.

\bibitem[KTG{\etalchar{+}}06]{IRM}
Charles Kemp, Joshua~B. Tenenbaum, Thomas~L. Griffiths, Takeshi Yamada, and
  Naonori Ueda.
\newblock Learning systems of concepts with an infinite relational model.
\newblock In {\em Proc.~AAAI 2006}, pages 381--388, 2006.

\bibitem[MP01]{Mzard2001TheBL}
Marc M{\'e}zard and Giorgio Parisi.
\newblock The bethe lattice spin glass revisited.
\newblock {\em The European Physical Journal B - Condensed Matter and Complex
  Systems}, 20:217--233, 2001.

\bibitem[OR15]{OrbanzRoy}
Peter Orbanz and Daniel~M. Roy.
\newblock Bayesian models of graphs, arrays and other exchangeable random
  structures.
\newblock {\em IEEE Trans. Pattern Anal. Mach. Intell.}, 37(2):437--461, 2015.

\bibitem[Pan15]{panchenko2015hierarchical}
Dmitry Panchenko.
\newblock Hierarchical exchangeability of pure states in mean field spin glass
  models.
\newblock {\em Probability Theory and Related Fields}, 161(3-4):619--650, 2015.

\bibitem[Res13]{resnick2013probability}
Sidney~I Resnick.
\newblock {\em A probability path}.
\newblock Springer Science \& Business Media, 2013.

\bibitem[SSY{\etalchar{+}}18]{statonetal2018icalp}
Sam Staton, Dario Stein, Hongseok Yang, Nathanael~L. Ackerman, Cameron Freer,
  and Daniel~M Roy.
\newblock The beta-bernoulli process and algebraic effects.
\newblock In {\em In Proceedings of 45th International Colloquium on Automata,
  Languages and Programming}, 2018.

\bibitem[SYA{\etalchar{+}}17]{statonetal2017pps}
Sam Staton, Hongseok Yang, Nathanael~L. Ackerman, Cameron Freer, and Daniel~M
  Roy.
\newblock Exchangeable random process and data abstraction.
\newblock In {\em Workshop on Probabilistic Programming Semantics (PPS 2017)},
  2017.

\bibitem[TvdMYW16]{TolpinMYW16}
David Tolpin, Jan{-}Willem van~de Meent, Hongseok Yang, and Frank~D. Wood.
\newblock Design and implementation of probabilistic programming language
  anglican.
\newblock In {\em Proceedings of the 28th Symposium on the Implementation and
  Application of Functional Programming Languages, {IFL} 2016, Leuven, Belgium,
  August 31 - September 2, 2016}, pages 6:1--6:12, 2016.

\bibitem[VR15]{veitchroy2015sparse}
Victor Veitch and Daniel~M. Roy.
\newblock The class of random graphs arising from exchangeable random measures.
\newblock {\em arxiv preprint 1512.03099}, 2015.

\bibitem[WvdMM14]{wood-aistats-2014}
Frank Wood, Jan~Willem van~de Meent, and Vikash Mansinghka.
\newblock A new approach to probabilistic programming inference.
\newblock In {\em Proceedings of the 17th International conference on
  Artificial Intelligence and Statistics}, pages 1024--1032, 2014.

\bibitem[XTYK06]{IHRM}
Z.~Xu, V.~Tresp, K.~Yu, and H.-P. Kriegel.
\newblock Infinite hidden relational models.
\newblock In {\em Proc. 22nd Int. Conf.Uncertainty in Artificial Intelligence
  (UAI 2006}, pages 544--551, 2006.

\end{thebibliography}

\end{document}